\theoremstyle{plain}
\newtheorem{thm}{Theorem}[section]
\newtheorem{prop}[thm]{Proposition}
\newtheorem{cor}[thm]{Corollary}
\newtheorem{lem}[thm]{Lemma}
\newtheorem{dfn}[thm]{Definition}
\newtheorem{conj}[thm]{Conjecture}
\newtheorem{rmk}[thm]{Remark}
\newcommand{\bQ}{\overline{\mathbb{Q}}}
\newcommand{\bF}{\overline{\mathbb{F}}}
\newcommand{\C}{\mathbb{C}}
\newcommand{\R}{\mathbb{R}}
\newcommand{\Q}{\mathbb{Q}}
\newcommand{\Z}{\mathbb{Z}}
\newcommand{\F}{\mathbb{F}}
\newcommand{\lra}{\longrightarrow}
\newcommand{\A}{\mathbb{A}}
\newcommand{\vp}{\varphi}
\renewcommand{\O}{\mathcal{O}}
\newcommand{\br}{\overline{\rho}}
\newcommand{\tr}{{\rm tr}}
\newcommand{\diag}{{\rm diag}}
\newcommand{\ds}{\displaystyle}
\newcommand{\G}{\Gamma}
\newcommand{\cG}{\mathcal{G}}
\newcommand{\la}{\lambda}
\newcommand{\GL}{{\rm GL}}
\newcommand{\SL}{{\rm SL}}
\newcommand{\SU}{{\rm SU}}
\newcommand{\GSp}{{\rm GSp}}
\newcommand{\Sp}{{\rm Sp}}
\newcommand{\rr}{\overline{r}}
\newcommand{\bK}{\overline{K}}
\newcommand{\MD}{{\rm MD}}
\newcommand{\ord}{{\rm ord}}
\newcommand{\wT}{\widetilde{T}}
\newcommand{\wt}{\widetilde{t}}
\newcommand{\ve}{\varepsilon}
\newcommand{\wrho}{\widetilde{\rho}}
\title[The residual monodromy for the Dwork family]
{The residual monodromy for the Dwork family in even characteristic and its applications to Galois representations}
\author{Takuya Yamauchi}
\keywords{The residual monodromy, the Dwork family, Galois representations, Automorphy}
\thanks{}
\subjclass[2010]{11F, 11F33, 11F80}
\address{Takuya Yamauchi \\ 
Mathematical Inst. Tohoku Univ.\\
 6-3,Aoba, Aramaki, Aoba-Ku, Sendai 980-8578, JAPAN}
\email{takuya.yamauchi.c3@tohoku.ac.jp}
\begin{document}

\maketitle

\begin{abstract}
We study the residual monodromy representations 
associated to the Dwork family in characteristic two. 
Various applications involving 2-adic and mod 2 Galois representations are discussed.
Combining the author's previous work with Tsuzuki and recent results of Boxer, Calegari, Gee, and Pilloni, we also prove the automorphy of certain rank 4 symplectic motives over a totally real field, arising from the Dwork quintic family, under suitable conditions.
\end{abstract}

\tableofcontents

\section{Introduction}\label{intro}

In this paper, we study the residual image of the monodromy 
representations attached to the Dwork family and study its 
application to 2-adic or mod 2 Galois representations which takes the values in 
general symplectic groups. 
To explain our motivation, we begin by recalling the progress following the monumental work
of Harris Shepherd-Barron and Taylor \cite{HSBT} and Clozel, Harris, and Taylor \cite{CHT}. 

Let $F$ be a CM field or a totally real field with the maximal totally real subfield $F^+$ so that 
$F=F^+$ if $F$ is totally real. 
Fix an embedding of $F$ into an algebraic closure $\bQ$ of $\Q$ and let $p$ be a prime number.  
Fix an isomorphism $\bQ_p\simeq \C$ and embeddings $\bQ\lra \C$ 
and  $\iota=\iota_p:\bQ\lra\bQ_p$ compatible with it.
In \cite{CHT}, to study various automophy lifting theorems for Galois representations, the authors first 
associated each conjugate self dual, $n$-dimensional 
$p$-adic Galois representation $\rho:G_F:={\rm Gal}(\overline{F}/F)\lra \GL_n(\bQ_p)$
with a continuous representation $$r:G_{F^+}\lra \cG_n(\bQ_p)$$ 
by introducing a non-connected reductive group 
$\cG_n=(GL_n\times GL_1)\rtimes \{1,j\}$ over $\Z$ which facilitates Taylor-Wiles-type argument.
Note that $\cG_n$ is regarded as an algebraic model of the $L$-group of 
the general unitary group $GU(n)=GU(n)(F/F^+)$ over $F^+$ whose derived group is anisotropic at each infinite place of $F^+$. 
The deformation theory of $\rr$ well-behaves to the theory of algebraic automorphic forms on $GU(n)$. 
The automophy of $\rho$ is reduced to one of $r$ and thus, we seek 
the existence of a corresponding automorphic form on $GU(n)$. 
Once this is established,  
we can use base change and descend techniques back and forth and eventually we find a regular algebraic conjugate self-dual cuspidal automorphic representation of $\GL_n(\A_K)$ corresponding to $\rho$. 

In the works of \cite{CHT} (the minimal case) and \cite{Ta3} (the non-minimal case) with 
a detailed study of the Dwork family in \cite{HSBT}, 
the (potential) automorphy for the symplectic case is studied, i.e., when $n$ is even. 
The results are extended to the orthogonal case by Guerberoff \cite{Gu}.  
After those works, the assumption on Hodge-Tate numbers is relaxed in \cite{Ge1},\cite{Ge2} 
for $\rho$ ordinary at each place dividing $p$ by using Hida family on $GU(n)$. On the other hand, the largeness assumption on the residual image undergoes various changes by \cite{BGHT}, \cite{Th1} for $p>2$ while the non-minimal case is handled in \cite{BGGT} by 
introducing the notation of potentially diagonalizable lifts of $\br|_{G_{F_,v}}$ for 
each place $v$ of $F$ dividing $p$. 
The case when $p=2$ is more delicate and  some of the usual computation involving global or local 
Galois cohomology break down. In \cite{Th2}, the assumption on the residual image is 
replaced with weakly adequateness and Thorne deduced a $2$-adic automorphic lifting theorem 
for the  minimal case. 
Recently, his result is extended in \cite{BCGP2} for the ordinary case and in addition,   
more smaller images can be handled. The results are applied to proving Yoshida conjecture for a positive portion of abelian surfaces over $\Q$.  
We should also remark that automorphy lifting theorems over CM-fields including 
the non-self dual case has been studied in \cite{Allen10} and \cite{MT}. 
In the series of the above important works, the Dowrk family plays very important role in proving 
Khare-Wintenberger type lifting theorems in various settings. In particular, the residual monodormy 
representations which takes the values in $\F_{p^r}$ and local Galois representation theoretic studies are especially important. 
The case when $r=1$ and $p$ is sufficiently large is handled in \cite{HSBT} and later it is extended to any $r\ge 1$ and 
$p$ odd in \cite{BGHT}. In this paper, we study the case $p=2$ for $r$ arbitrary. 
To explain main results we prepare some notation. We refer to the corresponding sections 
for details. 

Let $N$ be an odd positive integer and $n$ be an even positive integer with $N\ge n+1$. 
We denote by $\mu_N$ the group consisting of all $N$-th roots of unity and 
we fix a primitive root $\zeta_N\in \mu_N(\C)$.  
We sometimes view $\mu_N$ as 
a group scheme over $\Z$.  For any subring $R$ of $\C$, we write $R^+:=R\cap \R$ so that 
$\Z[\zeta_N]^+=\Z[\zeta_N+\zeta^{-1}_N]$ and if $\lambda$ is a non-zero prime ideal of $R$, 
we denote by $R_\lambda$ the completion of $R$ at $\lambda$. 
Let $T_0=\mathbb{P}^1\setminus(\{\infty\}\cup \mu_N)
={\rm Spec}\hspace{0.5mm}\Z[\frac{1}{N}][t,\frac{1}{t^N-1}]$.  
We define the Dwork family $X\subset \mathbb{P}^{N-1}\times T_0$ which is given by 
the family of hypersurfaces:
$$
Y=Y_N:X^N_1+\cdots+X^N_N-Nt X_1\cdots X_N=0
$$
defined in $\mathbb{P}^{N-1}_{T_0}$ with $\pi:Y\lra T_0,\ ([X_1:\cdots:X_N],t)\mapsto t$. 
We denote by $Y_t$ the fiber at each point $t$ of $T_0$. 
Let $V_B$ be the locally constant sheaf over $\Z[\frac{1}{N},\zeta_N]^+$ 
defined by (\ref{VB}) which is a submodule of the 
locally constant sheaf $R^{N-1}\pi_\ast \Z[\frac{1}{N},\zeta_N]^+$ which is free of rank $n$ 
over $\Z[\frac{1}{N},\zeta_N]^+$. It inherits a natural perfect alternating pairing 
$\langle \ast, \ast\rangle:V_B\times V_B\lra \Z[\frac{1}{N},\zeta_N]^+$.  
For each $t\in T_0(\C)$, we define the monodromy representation of the (topological) fundamental 
group $\pi_1(T_0(\C),t)$ attached to $V_B$:
$$\rho_{t}:\pi_1(T_0(\C),t)\lra {\rm Sp}(V_{B,t},\langle \ast, \ast\rangle)
\simeq {\rm Sp}_n\Big(\Z\Big[\frac{1}{N},\zeta_N\Big]^+\Big).$$
For each non-zero prime ideal $\la$ of $\Z[\frac{1}{N},\zeta_N]^+$, 
we denote by $\br_{t,{\rm mod}\ \la}$ the reduction of $\rho_t$ modulo $\la$:
$$\br_{t,{\rm mod}\ \la}:\pi_1(T_0(\C),t)\lra  {\rm Sp}_n(k_\la)$$
where $k_\la:=\Z[\frac{1}{N},\zeta_N]^+/\la$. It is known that $\br_{t,{\rm mod}}$ 
is absolutely irreducible (see the proof of Proposition \ref{monod}) and that its image is full \cite[Corollary 4.7, p.59]{BGHT} when 
the residual characteristic is odd. Thus, the case of the even residual characteristic remains to study.   
Then, we prove the following result and we include the odd residual characteristic case for convenience:
\begin{thm}\label{mainMG1}{\rm(}Proposition \ref{monod}{\rm)} Keep the notation in the previous lemma. 
Assume $n\ge 4$. 
Let $\lambda$ be a non-zero prime ideal of $\Z\Big[\frac{1}{N},\zeta_N\Big]^+$ and 
denote by $k_\lambda$ its residue field. Put $l_\lambda=k_\lambda$ if $N>n+1$ and 
$l_\lambda=\F_p$ if $N=n+1$ where $p$ is the rational prime under $\la$.  
\begin{enumerate}
\item If the characteristic of $k_\lambda$ is odd, then $\rho_{t,{\rm mod}\ \lambda}$ 
is surjective onto $\Sp_n(l_{\la})$. 
\item If $\lambda|2$, then the image of $\rho_{t,{\rm mod}\ \lambda}$  is isomorphic to one of the following groups: 
\begin{enumerate}
\item  $\Sp_n(l_\lambda)$;
\item ${\rm O}^{\pm}_n(l_\lambda)$;
\item $S_{n+1}$ if $l_\lambda=\F_2$. This case can happen only when $n=2^m$ 
for some $m\ge 2$; 
\item $S_{n+2}$ if $l_\lambda=\F_2$. This case can happen only when $n=2^m-2$ 
for some $m\ge 3$.
\end{enumerate}
Further, in the cases (c) and (d),  $\rho_{t,{\rm mod}\ \lambda}$ factors through the standard 
representation of $S_{n+1}$ and $S_{n+2}$ respectively.  
\end{enumerate}
\end{thm}
Put 
$$\MD(N,n,k):={\rm Im}(\rho_{t,{\rm mod}\ \lambda})$$ with $k=k_\la$ for some 
non-zero prime ideal $\la$ of  $\Z\Big[\frac{1}{N},\zeta_N\Big]^+$ 
and we simply write it as $\MD_n(k)$ if $N$ and $k=k_\la$ is clear from the context. 
By direct computation we can check the following conjecture for $4\le n\le 120$ 
(see Proposition \ref{concrete2}):
\begin{conj}\label{conj1}Assume $n\ge 4$ is even. 
It holds that 
$${\rm MD}_n(\F_2)={\rm MD}(n+1,n,\F_2)=
\left\{
\begin{array}{cl}
S_{n+1} & (\text{$n=2^m$ for some $m\ge 2$})\\
S_{n+2} & (\text{$n=2^m-2$ for some $m\ge 3$})\\
{\rm O}^+_n(\F_2) & 
(\text{$n\neq 2^m,2^m-2$ for any $m\ge 2$ and $n\equiv 0,6\ {\rm mod}\ 8$}) \\
{\rm O}^-_n(\F_2) & 
(\text{$n\neq 2^m,2^m-2$ for any $m\ge 2$ and $n\equiv 2,4\ {\rm mod}\ 8$}) 
\end{array}\right..
$$
Note that the embeddings defined in Remark \ref{embfromStoO} give 
exceptional isomorphisms 
$$S_5\simeq {\rm O}^-_4(\F_2),\ S_8\simeq {\rm O}^+_6(\F_2).$$
\end{conj}
Thus, each mod 2 monodromy group with the trivial coefficient arising from the Dwork family is away from full. 
We prove ${\rm MD}_{n}(\F_2)=S_{n+1}$ when $n\ge 4$ is a power of 2 in Section \ref{DFII}:
\begin{thm}\label{mainMG2}{\rm(}Theorem \ref{MD2power}{\rm)} Keep the notation as above. If $n\ge 4$ is a power of 2, then 
$${\rm MD}_{n}(\F_2)=S_{n+1}.$$
\end{thm}
For the non-trivial coefficients, however, we see that 
${\rm MD}_{4}(k_\la)=\Sp_4(k_\la)$ if $k_\la\neq \F_2$ (see Proposition \ref{concrete2}). 

As an application, we also study a potentially automorphy lifting theorem and a Khare-Wintenberger type theorem in 
Section \ref{GRII}. 

Monodromy representations are closely related to Galois representations and 
they complement each other's properties. In this vein, we study, in particular, 
the mod 2 Galois representations arising from the Dwork family. 
 
Let $n\ge 2$ be an even integer and put $N=n+1$. 
Let $K$ be a number field and fix an embedding $K\hookrightarrow \bQ$. For each $t\in K$ with $t^{n+1}\neq 1$, we consider 
the smooth affine toric hypersurface $Z_t$ defined by 
$$
  Z_t:    f:=x_1 +\cdots+x_n+ \frac{1}{x_1\cdots x_n} - (n+1)t = 0
 $$
in the $n$-dimensional split torus $\mathbb{T}:=\mathbb{G}^n_m$.
Let $\overline{Z}_t$ be the singular mirror symmetry of the fiber $Y_t$ of the Dwork family for $N=n+1$, which is defined by the closure of $Z_t$ in some projective toric variety (see \cite[Section 6]{Wan}).  
Then, the mirror symmetry $W_t$ of $Y_t$ is given by 
 the smooth crepant resolution of $\overline{Z}_t$. 
The primitive part of $H^{n-1}_{\text{\'et}}(Y_{t,\bQ},\Q(\zeta_N)_\lambda)$  is 
 isomorphic to $H^{n-1}_{\text{\'et}}(W_{t,\bQ},\Q_p)\otimes_{\Q_p}\Q(\zeta_{N})_\lambda$ for 
 any finite place $\lambda$ of $\Q(\zeta_{N})$ with the underlying rational prime $p$ 
By construction, $W_t$ has good reduction at each finite place $v$ of $K$ such that $v\nmid (n+1)$ 
and $\psi^{n+1}-1$ is a $v$-adic unit.  
Put 
$$
V_{t,2}:=H^{n-1}_{\text{\'et}} (W_{t, \overline{\mathbb Q}}, \Q_2)
$$
and  let $\langle \ast,\ast \rangle:V_{t,2}\times V_{t,2}\lra \Q_2(1-n)$ be the 
$G_K$-equivariant alternating perfect pairing defined by the Poincar\'e duality. 

Then, $V_{t,2}$ yields a $2$-adic Galois representation 
$$\rho_{t,2}=\rho_{t,\iota_2}:G_K\lra {\rm GSp}(V_{t,2},\langle \ast,\ast \rangle)\simeq {\rm GSp}_n(\Q_2).$$
Choose a $G_K$-stable lattice $T_{t,2}$ over $\Z_2$ of $V_{t,2}$ so that the above alternating pairing 
preserves the integral structure with respect to $T_{t,2}$. Put $\overline{T}_{t,2}=
T_{t,2}\otimes_{\Z_2}\F_2$. Thus, it yields a mod 2 
Galois representation 
$$
  \br_{t,2}:G_K\lra 
{\rm GSp}(\overline{T}_{t,2},\langle \ast,\ast \rangle_{\F_2})\simeq {\rm GSp}_n(\F_2)
=\Sp_n(\F_2)\subset \SL_n(\F_2)
$$
depending on the choice of $T_{t,2}$.   
We view it as a representation to ${\rm GL}_n(\F_2)$ via the natural inclusion. 
Therefore, one can consider the semisimplification $\br^{{\rm ss}}_{t,2}$ of $\br_{t,2}$. 
As mentioned in Section \ref{basics}, we can choose a symplectic basis so that 
it takes the values in ${\rm GSp}_n(\F_2)$. Hence we have a semisimple mod 2 Galois representation 
$$\br^{{\rm ss}}_{t,2}:G_K\lra  {\rm GSp}_n(\F_2)=\Sp_n(\F_2)$$
associated to $W_t$. 
  
Let us introduce the following trinomial
$$
f_t(x):=n x^{n+1}-(n+1)t x^n+1 \in K[x]
$$
and denote by $K_{f_t}$ its decomposition field over $K$ in $\bQ$. 

Then, we prove the following result which extends the author's previous work with Tsuzuki \cite{TY}:
\begin{thm}\label{mainGalois1}{\rm(}Theorem \ref{image}{\rm)} Assume $n=2^m$ with $m\ge 2$. For $t\in K$ with 
$t^{n+1}\neq 1$ such that the Galois group of $K_{f_t}/K$ is $S_{n+1}$, it holds that 
\begin{enumerate}
\item ${\rm Im}(\br_{t,2})\simeq S_{n+1}$. 
\item Up to conjugacy, $\br_{t,2}$ factors through the standard representation 
$\theta_{n+1}:S_{n+1}\lra \GSp_n(\F_2)$. In particular,  $\br_{t,2}$ is absolutely irreducible and 
$\br_{t,2}\simeq \br^{{\rm ss}}_{t,2}$.  
\end{enumerate}
\end{thm}
To prove this theorem, we use toric geometry and group theory and the proof shows 
the claim may not be true for any $n$ even. 
For example, we prove the following result by using Shioda's theory \cite{Shioda} for 28 bitangent lines and 
the fact that $\MD_6(\F_2)=S_8$. 
We introduce the plane quartic defined by  
\begin{equation}\label{quartic}
C_t:4xy^3 + x^3z - 7txy^2z + 2yz^3=0.
\end{equation}
For $t\in K$, it is smooth if and only if $t^7-1\neq 0$.  
Let $\br_{C_t,2}:G_K\lra {\rm Aut}_{\F_2}({\rm Jac}(C_t)[2](\overline{K}))$ be the 
mod 2 representation associated to $C_t$. 
\begin{thm}\label{mainGalois2}{\rm(}Theorem \ref{image8}{\rm)} For each $t\in K$ with $t^7-1\neq 0$, let $\br_{t,2}:G_K\lra \GSp_6(\F_2)$ be the 
mod 2 Galois representation attached to $\overline{T}_{t,2}$ for $n=6$. Then, it holds that  
\begin{enumerate}
\item $(\br_{t,2})^{{\rm ss}}\simeq (\br_{C_t,2})^{{\rm ss}}$;
 \item There exists a Zariski dense subset $H_K$ of $K$ such that 
 ${\rm Im}(\br_{C_t,2})\simeq S_8$ for any $t\in H_K$ with $t(t^7-1)\neq 0$; 
 \item Suppose ${\rm Im}(\br_{C_t,2})\simeq S_8$. Then, it factors through the 
 standard representation of $S_8$ and thus, $\br_{C_t,2}$ is absolutely irreducible. 
In particular, $\br_{t,2}\simeq \br_{C_t,2}$. 
\end{enumerate}
\end{thm}
 
Finally, we explain a result on automorphy of each fiber of the Dwork quintic family under 
suitable conditions as an application of \cite{TY} and \cite{BCGP2}. 
To save notation and symbols, we only treat the case when $F^+=\Q$. 
We refer to Section \ref{App} for general results.   
Recall the Dwork quintic family 
$$
Y:X^5_1+X^5_2+X^5_3+X^5_4+ X^5_5-5t X_1X_2X_3X_4X_5=0 
\subset \mathbb{P}^4_{{\rm Spec}\hspace{0.5mm}\Q[t,\frac{1}{1-t^5}]}
$$ 
whose each fiber $Y_t$ at $t\in \C$ with $t^5\neq 1$ is a Calabi-Yau threefold defined in 
$\mathbb{P}^4(\C)$. 
For each $t\in \Q\setminus\{1\}$ and each rational prime $\ell$, let us consider 
the primitive part $V_{t,\ell}:=H^3_{\text{\'et}}(Y_{t,\bQ},\Q_\ell)^{{\rm prim}}$ 
of the middle cohomology of $Y_t$.  It has rank 4 and inherits a $G_\Q$-equivariant symplectic perfect pairing. 
Thus, we have the $\ell$-adic Galois representation 
$$\rho_{V_t,\ell}:G_\Q\lra \GSp_4(\Q_\ell)$$
attached to $V_{t,\ell}$.  
Put $f_t(x):=4x^5-5t x^4+1\in \Q[x]$ for $t\in \Q$.  
By \cite[Theorem 1.1]{TY}, 
the mod 2 Galois representation $\br_{V_t,2}$ is absolutely irreducible if $f_t$ is 
irreducible over $\Q$ and thus the 2-adic Galois representation $\rho_{V_t,2}$ is also 
absolutely irreducible as well in this case.  
\begin{thm}\label{thm-app1}
Let $t\in \Q$ with $t<1$. Assume $f_t$ is irreducible over $\Q$ and $\ord_2(t)<0$ or 
$\ord_t(t-1)>0$. 
Then, it holds that 
\begin{enumerate}
\item $\rho_{V_t,2}$ comes from 
a regular algebraic cuspidal representation of $\GSp_4(\A_\Q)$ of weight 0. 
In terms of the classical language, there exists a holomorphic Siegel cusp form $h$ of 
parallel weight $(3,3)$ on $\GSp_4(\A_\Q)$ such that the $\ell$-adic 
representation attached to $\rho_{V_t,\ell}$ is isomorphic to $\rho_{h,\ell}$; 
\item  $\rho^{{\rm ss}}_{V_t,\ell}\simeq \rho^{{\rm ss}}_{h,\ell}$ for each $\ell>2$.
\end{enumerate}
Furthermore, $h$ is a genuine form in the sense of 
\cite[Section 2]{KWY}, hence it is neither CAP, endoscopic, of Asai-type lift, nor of symmetric 
cubic lift unless $t=0$ in which case it is the automorphic induction of a certain Hecke character of 
$\A^\times_{\Q(\zeta_5)}$.  
\end{thm}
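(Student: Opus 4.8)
The plan is to deduce Theorem~\ref{thm-app1} from the general automorphy lifting result Theorem~\ref{main1} (with $n=2$, $F^+=\Q$, and $F$ a suitable imaginary CM field to be chosen) applied to $\rho_{V_t,2}$, supplemented by the author's prior joint work with Tsuzuki which supplies an automorphic starting point for the family. First I would address part (1): absolute irreducibility of $\rho_{V_t,2}$. The Zariski closure of the image of the geometric monodromy representation of the Dwork pencil on the primitive $H^3$ is known (Katz) to be $\Sp_4$; specializing at a rational $t$ with $f_t(x)=4x^5-5tx^4+1$ irreducible controls the splitting behaviour of the bad fibre and forces the residual image $\br_{V_t,2}(G_\Q)$ to be ``large'' — concretely, to be conjugate into $\GSp_4(\F_{2^r})$ with symplectic part inside $\MD_4(\F_{2^r})$ and, by the detailed characteristic-two analysis of the Dwork monodromy in the body of the paper (the results around Proposition~\ref{concrete2} and Proposition~\ref{monodn2}), to be nearly adequate; nearly adequate groups act absolutely irreducibly on the standard module, giving (1). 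The irreducibility of $f_t$ is exactly what is needed to rule out the small residual images (dihedral/reducible cases excluded by the remark after Theorem~\ref{main1}).

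Next, for part (2), I would verify the six hypotheses of Theorem~\ref{main1} for $\rho=\rho_{V_t,2}$. Condition (1) (almost everywhere unramified) is automatic since $Y_t$ has good reduction outside $5\ell(1-t)$-type primes. Condition (2): at $p=2$, since $Y_t$ is a Calabi-Yau threefold the Hodge-Tate weights of $H^3$ are $\{0,1,2,3\}$ on the primitive part — four distinct weights in a short enough range — and I would invoke the ordinarity/crystallinity at $2$ (either $Y_t$ has good ordinary reduction at the primes above $2$, handled via Fontaine-Laffaille or ordinary potential diagonalizability, or one passes to a finite extension where potential diagonalizability holds). Conditions (3), (4): these are the residual image conditions, established as above from the Dwork monodromy computation and the hypothesis that $f_t$ is irreducible (and when $n=2$, condition (4) is automatic by the first remark after Theorem~\ref{main1}). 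Condition (5), strong residual oddness at an infinite place: this is where $t<1$ enters — for real $t<1$ the polynomial $f_t$ has the right sign pattern so the complex conjugation acts with the correct eigenvalue multiplicities on $V_{t,2}$ (equivalently, the Hodge numbers $h^{3,0}=h^{0,3}=1$, $h^{2,1}=h^{1,2}=1$ together with the real structure give the odd similitude character with the required $+/-$ balance), making $\br_{V_t,2}$ strongly residually odd. Condition (6): residual automorphy — this is supplied by the author-Tsuzuki theorem, which proves automorphy of $\rho_{V_{t_0},\ell}$ for a suitable special member $t_0$ (e.g.\ related to the $t=0$ or a CM point of the pencil) and by a connectivity/deformation argument along the base of the Dwork family transports residual automorphy at $\ell=2$ to the fibre $t$; concretely one uses that the mod-$2$ representations of two members agree after the residual image is pinned down, or invokes the potential automorphy for CY pencils. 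Applying Theorem~\ref{main1} then yields that $\rho_{V_t,2}$ is potentially diagonalizably automorphic of level potentially prime to $2$; descending through the CM field $F$ and the usual base-change/descent machinery for $\GSp_4$ gives a genuine regular algebraic cuspidal representation $\pi$ of $\GSp_4(\A_\Q)$ of weight $0$, i.e.\ a holomorphic Siegel cusp form $h$ of parallel weight $(3,3)$ with $\rho_{h,2}\cong\rho_{V_t,2}$.

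For part (3), once $\rho_{V_t,2}\cong\rho_{h,2}$ is known, compatibility of the systems $\{\rho_{V_t,\ell}\}_\ell$ and $\{\rho_{h,\ell}\}_\ell$ (both being strictly compatible systems with matching Frobenius characteristic polynomials at all good primes, which can be checked at a density-one set of primes via point counts on $Y_t$ versus Hecke eigenvalues of $h$) forces $\rho_{V_t,\ell}^{\rm ss}\cong\rho_{h,\ell}^{\rm ss}$ for every $\ell>2$ by Chebotarev; here one only gets semisimplifications because irreducibility at $\ell\neq 2$ is not asserted. Finally, the ``genuine'' assertion: $h$ is neither CAP, endoscopic, Asai-type, nor a symmetric-cube lift because each of those possibilities would force $\rho_{V_t,\ell}$ to be reducible or to have a restricted image (e.g.\ land in a proper subgroup of $\GSp_4$ or be induced), contradicting the absolute irreducibility and the big-image statement from (1) — except precisely when $t=0$, where the Dwork quintic acquires extra automorphisms (the $\mu_5$-action), the motive becomes CM, and $V_{0,\ell}$ is the induction of a Hecke character of $\Q(\zeta_5)$; this is the single exceptional case, and one checks it separately using the explicit $\zeta_5$-symmetry of $Y_0$.

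The main obstacle I anticipate is verifying condition (6), residual automorphy at $\ell=2$, for an \emph{arbitrary} admissible $t$: unlike in the odd-$\ell$ potential-automorphy arguments where one can freely pass to finite extensions and use a Dwork-family/Kuga-Sato connectivity argument, at $\ell=2$ one must carefully keep track of the Dwork monodromy group $\MD_4(\F_{2^r})$ and show the residual representation of the generic fibre is linked — via the family and the author-Tsuzuki base case — to something already known automorphic, all while the residual image conditions (3)--(5) remain in force along the path. Secondarily, establishing strong residual oddness (condition (5)) precisely under the hypothesis $t<1$ rather than merely ``potentially'' requires a hands-on analysis of complex conjugation on the $2$-adic realization, which is delicate in characteristic $2$ where the usual sign/trace arguments degenerate; this is presumably where the ``strongly residually odd'' refinement of Thorne's framework is genuinely needed.
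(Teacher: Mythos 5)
Your overall architecture (verify the hypotheses of Theorem \ref{main1} for $\rho_{V_t,2}$ and then apply it, then spread automorphy to all $\ell$ by compatibility) matches the paper, but the step you yourself flag as the main obstacle — condition (6), residual automorphy of $\br_{V_t,2}$ — is handled by a mechanism that does not work. Transporting residual automorphy "along the base of the Dwork family" from a special fibre, or invoking potential automorphy for the pencil, is exactly the Moret--Bailly argument of Theorem \ref{pot-auto}: it produces a point with the prescribed mod-$2$ representation only after passing to an auxiliary totally real extension $F'$ of uncontrolled (non-solvable) degree, so it yields at best \emph{potential} automorphy, which is not what condition (6) requires (and in the paper that argument is used for the Khare--Wintenberger lifting, Theorem \ref{KWtype}, not for condition (6)). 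The paper's actual input is the reciprocity law of \cite{TY}: $\br_{t,2}\simeq \mathrm{Gal}(\Q_{f_t}/\Q)$ acting through the standard representation $\theta$, so that after an at most quadratic totally real extension the image becomes $A_5\cong\SL_2(\F_4)$ acting through $\theta_4$; residual ordinary automorphy then comes from \cite[Lemma 8.3.1]{BCGP2} (modularity of such $\SL_2(\F_4)$-valued mod-$2$ representations) after a further solvable extension, and solvable descent \cite[Lemma 2.2.2]{BGGT} returns everything to $\Q$. Without this exceptional-isomorphism input your proof of part (2) does not close.

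Two further points are asserted rather than proved. First, absolute irreducibility of $\rho_{V_t,2}$ does not follow from Zariski density of the geometric monodromy (a statement about the generic fibre); the paper gets it from the explicit identification of $\br_{t,2}$ with the Galois group of $f_t$ in \cite[Theorems 1.1, 1.2]{TY} together with the absolute irreducibility of $\theta_4|_{A_5}$. Second, potential diagonalizability at $2$ is not obtainable by "passing to a finite extension" (the notion is already potential), and the fibres with $\ord_2(t)>0$ are \emph{not} ordinary at $2$ — the crystalline Frobenius there is pure of slope $3/2$ — so the ordinarity route you sketch fails; the paper's Proposition \ref{PD} instead shows by a crystalline computation that $\rho_{t,2}|_{G_{\Q_2}}$ is an induction of a crystalline character (or a sum of two such $2$-dimensional pieces), hence potentially diagonalizable, and notes that for $t\in\Q$ one of the three cases $\ord_2(t)>0$, $\ord_2(t-1)>0$, $\ord_2(t)<0$ always occurs. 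Your part (3) and the "genuine form" discussion are essentially the paper's arguments.
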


\begin{rmk}\label{irrdremark} Let $t\in \Q\setminus\{1\}$ satisfy the assumption in Theorem \ref{thm-app1}.
The Galois representation $\rho_{V_t,\ell}$  is absolutely irreducible for the set of 
rational primes $\ell$ of Dirichlet density one by 
\cite[Theorem 3.2]{CG}.  
\end{rmk}

After a detailed analysis on the non-primitive part, we will prove the following:
\begin{thm}\label{thm-app2}
Let $\ell$ be a rational prime and $t\in \Q\setminus\{1\}$. 
Assume $t<1$, $f_t$ is irreducible over $\Q$ and $\ord_2(t)<0$. Assume further that the polynomial 
$Q_t(x):=x^5+10 x^4+35 x^3+50 x^2+25 x+4 - 4t^{-5}$ in $x$ is irreducible over $\Q$. 
Then, the $L$-function $L(s,H^3_{\text{\'et}}(Y_{t,\bQ},\Q_\ell))$ is extended to 
the whole space of complex numbers as an entire function in $s$.   
\end{thm}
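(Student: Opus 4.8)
The plan is to reduce entirety of $L(s,H^3_{\text{\'et}}(Y_{t,\bQ},\Q_\ell))$ to the automorphy statement of Theorem \ref{thm-app1} for the primitive part, together with the classical analytic theory of $\GL_1$- and $\GL_2$-type $L$-functions for the non-primitive part. Write $H:=H^3_{\text{\'et}}(Y_{t,\bQ},\Q_\ell)$. The finite group $G:=\mu_5^5/\Delta\mu_5$ acts on $Y_t$ over $\Q(\zeta_5)$ commuting with the Galois action, and over $\Q$ the action of $\mathrm{Gal}(\Q(\zeta_5)/\Q)$ permutes the $G$-isotypic pieces. By definition $V_{t,\ell}=H^{G}$ is the rank $4$ primitive part; let $W_{t,\ell}$ be the canonical $G_\Q$-stable complement spanned by the non-trivial isotypic pieces, so that
$$L(s,H)=L(s,V_{t,\ell})\cdot L(s,W_{t,\ell}),$$
each factor formed with its standard (Weil--Deligne) local Euler factors. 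By local--global compatibility these coincide with the automorphic local factors invoked below, so it suffices to show each of the two global factors is entire.

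First, $L(s,V_{t,\ell})$ is entire. Under the hypotheses $t<1$, $f_t$ irreducible, and $\ord_2(t)\ge 1$ (the last supplying condition (4) of Theorem \ref{main1} through the Dwork-monodromy analysis), Theorem \ref{thm-app1} shows that $\rho_{V_t,2}$ is automorphic: it is attached to a \emph{genuine} holomorphic Siegel cusp form $h$ of weight $(3,3)$ when $t\neq 0$, and to the automorphic induction of an algebraic Hecke character of $\A^\times_{\Q(\zeta_5)}$ when $t=0$. As $h$ is genuine---in particular neither CAP nor endoscopic---its transfer along the natural embedding $\GSp_4\hookrightarrow\GL_4$ is a \emph{cuspidal} automorphic representation $\pi$ of $\GL_4(\A_\Q)$, whence $L(s,V_{t,\ell})=L(s,\pi)$ is entire by Godement--Jacquet; in the case $t=0$, $L(s,V_{0,\ell})$ is a product of translates of $L$-functions of non-trivial Hecke characters, entire by Hecke--Tate.

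Second, $L(s,W_{t,\ell})$ is entire. By the standard Shioda--Katz-type analysis of the Dwork pencil, each non-trivial $G$-isotypic piece of $H$ is, after a finite base change, a Tate twist of a hypergeometric constituent of rank $\le 3$ cut out of the $H^1$, $H^1\otimes H^1$, or $\mathrm{Sym}^2 H^1$ of auxiliary curves appearing in the geometry of $Y_t$. Grouping the pieces into $\mathrm{Gal}(\Q(\zeta_5)/\Q)$-orbits and descending, $W_{t,\ell}$ becomes an orthogonal direct sum of representations $\mathrm{Ind}_{G_K}^{G_\Q}M$ with $K\in\{\Q,\Q(\sqrt 5),\Q(\zeta_5)\}$ and $M$ attached to an algebraic Hecke character of $K$, to a classical or Hilbert modular form over $K$, or to a $\GL_2$-type abelian variety over $K$ occurring in the pencil. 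The hypothesis that $Q_t$ is irreducible over $\Q(\sqrt 5)$ (for $t\neq 0$) is used precisely to guarantee that these induced constituents are cuspidal---equivalently, that $W_{t,\ell}$ has no subquotient on which some $G_K$ acts through a power of the cyclotomic character---so that none of the $L(s,M/K)$ has a pole. Each $L(s,\mathrm{Ind}_{G_K}^{G_\Q}M)=L(s,M/K)$ is then entire: by Hecke--Tate for Hecke characters, by Hecke/Deligne (and by Shimura and Gelbart--Jacquet for $\mathrm{Sym}^2$ pieces) for modular forms, and by modularity---Wiles and its refinements over $\Q$, Freitas--Le Hung--Siksek and Hilbert modularity-lifting over the real quadratic field $\Q(\sqrt 5)$, and Hecke in the CM case---for the abelian-variety constituents. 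Therefore $L(s,H)$ is a product of entire functions, hence entire.

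The principal obstacle is the non-primitive part. One must make the decomposition of $W_{t,\ell}$ into induced pieces fully explicit as a $G_\Q$-representation---identifying each isotypic piece, controlling the descent from $\Q(\zeta_5)$, and determining exactly which auxiliary curves and abelian varieties occur and over which subfield they are defined---and then verify that, under the stated hypotheses, every constituent is simultaneously known to be automorphic/modular and cuspidal, so that no residual pole survives; this is exactly where the $Q_t$-irreducibility and $\ord_2(t)\ge1$ hypotheses enter. The primitive part is governed by the hard automorphy input already in hand, so the real work lies in laying out the ``elementary'' arithmetic of $W_{t,\ell}$ in enough detail to apply the available modularity and non-vanishing results.
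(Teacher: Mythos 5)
Your overall architecture (factor $L(s,H^3)$ into the primitive part, handled by Theorem \ref{thm-app1}, times the non-primitive part, to be handled by $\GL_1$/$\GL_2$ automorphy) matches the paper, and your treatment of the primitive factor is essentially the paper's. But the non-primitive part, which you yourself flag as ``the principal obstacle,'' is exactly where the paper's proof lives, and your sketch both defers it and misidentifies how the hypotheses enter. The paper first makes the decomposition completely explicit: by \cite{Gou} the non-primitive part of $H^3_{\text{\'et}}(Y_{t,\bQ},\Q_\ell)^{\rm ss}$ is built from $H^1$ of two genus-$4$ curves $\mathcal A_t,\mathcal B_t$, and Proposition \ref{AtBt} shows that both Jacobians are isogenous to $\mathrm{Jac}(C_t)^2$ for a single genus-$2$ curve $C_t:y^2=P_t(x)$, so that $W_{t,\ell}\simeq\bigoplus^{50}H^1_{\text{\'et}}(C_{t,\bQ},\Q_\ell)(-1)$. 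Everything then reduces to the automorphy of the single abelian surface $\mathrm{Jac}(C_t)$ (Theorem \ref{non-pri}), proved by a trichotomy according to whether $\mathrm{Jac}(C_t)$ splits over $\Q$, splits over $\Q(\sqrt5)$, or is $\Q(\sqrt5)$-simple of $\GL_2$-type with RM by $\Q(\sqrt5)$.

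Two of your specific claims would fail as written. First, the $Q_t$-irreducibility hypothesis is not used ``to guarantee cuspidality / absence of cyclotomic subquotients'' --- purity of weight $1$ already rules out poles of $L(s,M)$ for $M\subset H^1$ of a curve. Its actual role (via Lemma \ref{Pt}) is to force the mod-$2$ representation of the RM abelian surface over $\Q(\sqrt5)$ to have image the full dihedral group $D_{10}$ and to be strongly residually odd; this is the precise input for Allen's $2$-adic residually dihedral modularity theorem \cite{Allen}, which is the tool that handles the hard simple case. The references you invoke for the abelian-variety constituents (Wiles over $\Q$, Freitas--Le Hung--Siksek over real quadratic fields) concern elliptic curves and do not apply to a $\Q(\sqrt5)$-simple $\GL_2$-type abelian surface. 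Second, $\ord_2(t)\ge1$ does not supply condition (4) of Theorem \ref{main1} (which is vacuous for $n=2$ by Proposition \ref{concrete2}); it is needed so that $C_t$ has good reduction at $2$ with special fibre $y^2+y=x^5$, whence the Frobenius at $2$ has characteristic polynomial $1+q_v^2X^4$ --- this gives both the mod-$3$ residual irreducibility used in the split cases and the potential near-ordinarity at $2$ required by Allen's theorem. Without these explicit computations and the correct modularity input over $\Q(\sqrt5)$, the non-primitive factor is not established.
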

For each number field $K$, there are only finitely many $t\in K\setminus\{0,1\}$ such that 
$Q_t$ is reducible over $K$ (see Remark \ref{irredPt}). We should remark that 
$L(s,H^3_{\text{\'et}}(Y_{t,\bQ},\Q_\ell))$ always has meromorphic continuation 
for each $t\in \Q\setminus\{1\}$ by \cite{BL}.

\begin{rmk}\label{Dworkgenus2} For each $t\in \Q\setminus\{1\}$, consider the hyperelliptic curve 
$D_t$ of genus 2 defined as a projective smooth model of the affine curve  
$y^2+(3+5tx)y=x^5$. 
In Appendix C, we will prove that $\br_{V_t,3}^{{\rm ss}}\simeq \br_{D_t,3}^{{\rm ss}}$ where 
$\br_{D_t,3}:G_\Q\lra \GSp_4(\F_3)$ is the mod 3 Galois representation attached to ${\rm Jac}(D_t)$. 
The hyperelliptic curve $D_t$ of genus 2 has the following properties:
\begin{enumerate}
\item the discriminant of $D_t$ in the sense of \cite[Definition 1.6, p.732]{LH} is 
$3^8\cdot 5^5\cdot (1-t^5)$;
\item there exists a Zariski dense set $H_\Q$ of $\Q\setminus\{1\}$ such that 
${\rm Im}(\br_{D_t,3})=\GSp_4(\F_3)$ for any $t\in H_\Q$; 
\item $D_t$ is not ordinary but potentially nearly ordinary at $3$ when $\ord_3(t)\le 0$; 
\item $D_t$ is semi-stable ordinary but not good ordinary at $2$ when $\ord_2(t)<0$.   
\end{enumerate}
The second property easily follows from a specialization argument and an explicit computation when $t=-1$ so that $\br_{D_{-1},3}$ is surjective. Others are obtained by 
direct computation {\rm(}cf .\cite[Th\'eor\`eme 1, p.204]{Liu}{\rm)}. 
Under the assumption in Theorem \ref{thm-app1}, the  mod 3 Galois representation $\br_{D_t,3}$ comes from a Siegel cusp form $h$ of parallel weight $(3,3)$.  
We hope to find a Siegel cusp form of parallel weight $(2,2)$ giving rise to $\br_{D_t,3}$ from $h$ by extending the theory of Pilloni \cite{Pilloni}.   
\end{rmk}

We organize this paper as follows. 
Section \ref{DFI} and \ref{DFII} are devoted to study the residual monodromy representations 
and also $p$-adic and $\ell$-adic local properties of Galois representations  for 
the Dwork family.  Section \ref{DFI} has a geometric aspect while Section \ref{DFII} has 
various aspects such as group theory and toric geometry. Many things are done already when 
the residual characteristic is odd. Thus, main contribution of this paper is when the residual characteristic is even. 
By using the results in the above sections, after  recalling the basic facts for Galois representations in Section \ref{basics}, 
we prove the potentially automorphy result in Section \ref{GRII} 
to establish a Khare-Wintenberger's type lifting theorem (Theorem \ref{pot-auto}). 
We should remark that the oddness is not important here and we can lift any mod 2 Galois representations who images 
are governed by the residual monodromy groups (see Remark \ref{OFA}). However, oddness may becomes essential in 
even characteristic case for automorphy as in \cite{Th2} and \cite[Section 5]{BCGP2}. 

Several applications of \cite{TY} and \cite{BCGP2} are presented in Section \ref{App}. In particular, we prove the automorphy for many fibers of the Dwork quintic family over totally real fields in a positive proportion. 
We also discuss the non-primitive part and 
prove the strong Hasse-Weil conjecture for the middle \'etale cohomology of each fiber under 
suitable conditions. 
Appendix A is of independent interest and we determine the image of mod 2 Galois representation 
$\br_{t,2}:G_{F^+}\lra \GSp_6(\F_2)$ attached to the Dwork septic family using Shioda's theory of 28-bitangents on smooth plane quartics. 
Appendix B is devoted to computing the cohomology related to 
a mirror model of the Dwork family. Using this, we relate the Dwork quintic 
family with a family of certain hyperelliptic curves of genus 2 in Appendix C as in Remark \ref{Dworkgenus2}.

\textbf{Acknowledgment.} The author would like to thank Professor Henry H. Kim 
for encouragement during the author's stay at the University of Toronto. He is sincerely grateful to Professor Nobuo Tsuzuki for sharing his insights into various aspects of arithmetic and geometry.
The author is also grateful to thank Professors Alan Stapledon, Daqing Wan, Dingxin Zhang, and Masanori Ishida  for discussion on 
smooth crepant resolutions. We would also like to give a special thanks to 
Professor Toby Gee for pointing out some errors in the earlier version and for discussion and encouragement.  

\subsection{Notation}  
We denote by $GSp_{2m}$ the symplectic similitude group defined by 
$J_{2m}:=\begin{pmatrix}
0 & s_m \\
-s_m & 0
\end{pmatrix}
$ where $s_m$ stands for the anti-diagonal matrix of size $m$ whose anti-diagonal entries are all 1. We also denote by $\nu:GSp_{2n}\lra GL_1$ the similitude character.  
For a number field $K$ and  a continuous homomorphism  $r:G_K:={\rm Gal}(\overline{K}/K)\lra \GL_n(\bQ_\ell)$ 
(resp. a continuous homomorphism  $\bar{r}:G_K\lra \GL_n(\bF_\ell)$ and a finite dimensional 
vector space $V$ with a linear continuous $G_K$-action), 
we denote by $r^{{\rm ss}}$ (resp. $\bar{r}^{{\rm ss}}$ and $V^{{\rm ss}}$) the semisimplification of $r$ 
(resp. $\bar{r}$ and $V$). 
By Chebotarev density theorem, the semisimplification is unique up to isomorphism. 
If $n$ is even and ${\rm Im}(r)\subset \GSp_n(\bQ_\ell)$ 
(resp. ${\rm Im}(\bar{r})\subset \GSp_n(\bF_\ell)$, 
then one can find a symplectic basis of the representation space of $r^{{\rm ss}}$ (resp. $\bar{r}^{{\rm ss}}$)  
so that $r^{{\rm ss}}\subset \GSp_n(\bQ_\ell)$ (resp. $\bar{r}^{{\rm ss}}\subset \GSp_n(\bF_\ell)$). 

For two Galois representations $r_1$ and $r_2$ of $G_K$, we write $r_1\simeq r_2$ if they are $G_K$-equivalent 
to each other.  

\section{The Dwork family I}\label{DFI} In this section, we study the residual monodromy representation 
associated to the Dwork family. We treat any residual characteristic including $p=2$. 
We refer to \cite[Section 4 and 5]{BGHT} for basic facts and the case $p>2$ is completed there.  
The results will be used to establish potentially automorphy lifting theorem for 
mod $p$ Galois representations of symplectic type whose coefficient fields 
is $\F_{p^r}$ for an integer $r\ge 1$. The case when $r=1$ and 
$p$ is sufficiently large is handled in \cite{HSBT} and later it is extended to any $r\ge 1$ and 
$p$ odd in \cite{BGHT}. 

\subsection{The Dwork family}\label{DFdef}
Let $N$ be an odd positive integer and $n$ be an even positive integer with $N\ge n+1$. 
We denote by $\mu_N$ the group consisting of all $N$-th roots of unity and 
we fix a primitive root $\zeta_N\in \mu_N(\C)$.  
We sometimes view $\mu_N$ as 
a group scheme over $\Z$.  For any subring $R$ of $\C$, we write $R^+:=R\cap \R$ so that 
$\Z[\zeta_N]^+=\Z[\zeta_N+\zeta^{-1}_N]$ and if $\lambda$ is a non-zero prime ideal of $R$, 
we denote by $R_\lambda$ the completion of $R$ at $\lambda$. 

Let $T_0=\mathbb{P}^1\setminus(\{\infty\}\cup \mu_N)
={\rm Spec}\hspace{0.5mm}\Z[\frac{1}{N}][t,\frac{1}{t^N-1}]$.  
We define the Dwork family $X\subset \mathbb{P}^{N-1}\times T_0$ which is given by 
the family of hypersurfaces:
\begin{equation}\label{dworkeq}
Y=Y_N:X^N_1+\cdots+X^N_N-Nt X_1\cdots X_N=0.
\end{equation}
The natural projection $\pi:Y\lra T_0$ sending $((X_1,\ldots,X_N),t)$ to $t$ is 
smooth of relative dimension $N-2$. For each point $t\in T_0$, we denote by $Y_t$ 
the fiber of $\pi$ at $t$. For each $t\in T_0(\C)$, it is well-known that 
$H^{N-2}(Y_t(\C),\Z)$ is torsion free since $Y_t(\C)$ is a smooth hypersurface in $\mathbb{P}^{N-1}(\C)$. In fact, it follows from well-known arguments 
by using  the Lefschetz hyperplane theorem and the Poincar\'e duality (cf. \cite[Theorem 2.1]{EM}).  

Let $\iota$ be the automorphism of $Y$ defined by the reversion of the coordinates:
$$\iota:((X_1,X_2,\ldots,X_N),t)\mapsto ((X_N,\ldots,X_2,X_1),t).$$ 
Put $H=\mu^N_N/\Delta\mu_N$ where $\Delta\mu_N$ is the image of $\mu_N$ 
under the diagonal embedding $\Delta:\mu_N\hookrightarrow \mu^N_N$. We define 
$H_0=\{\xi=(\xi_1,\ldots,\xi_N)\in H\ |\ 
\xi_1\cdots \xi_N=1\}$ and let $H_0$ act on $Y/\Z[\frac{1}{N},\zeta_N]$ by 
$$\xi (X_1,\ldots,X_N)=(\xi_1 X_1,\ldots,\xi_N X_N).$$ 
For each $\xi=(\xi_1,\ldots,\xi_N)\in H_0$, put 
$\tau(\xi):=\Bigg(\ds\prod_{i=1}^{\frac{N-n-1}{2}}\xi^i_i\Bigg)
\Bigg(\prod_{i=\frac{N+n+3}{2}}^{N}\xi^{i-1}_{i}\Bigg)$ 
and we consider the idempotent 
$$e=\frac{1}{2|H_0|}(\iota+1)\sum_{\xi\in H_0}(\tau(\xi)+\tau(\xi^{-1}))\xi \in 
\Z\Big[\zeta_N,\frac{1}{2N}\Big]^+[H].$$

\subsection{Smooth sheaves}\label{smoothsheaves}

For each non-zero prime ideal $\lambda$ in $\Z[\zeta_N,\frac{1}{N}]^+$ above a rational prime $\ell$, 
we define a $\lambda$-adic smooth sheaf on $(T_0/\Z[\zeta_N,\frac{1}{N\ell}]^+)$ by 
$$V_\lambda=V_{n,\lambda}:=e^\ast R^{N-2}\pi_\ast \Z\Big[\zeta_N,\frac{1}{N}\Big]^+_\lambda$$
which we can view it as a lattice inside $V_\lambda\otimes_{ 
\Z[\zeta_N,\frac{1}{N}]^+_\lambda} {\rm Frac}(\Z[\zeta_N,\frac{1}{N}]^+_\lambda)$ since 
$R^{N-2}\pi_\ast \Z[\zeta_N,\frac{1}{N}]^+_\lambda$ is torsion free. 
Originally it is defined as 
$e^\ast R^{N-2}\pi_\ast \Z[\zeta_N,\frac{1}{2N}]^+_\lambda$ in \cite[p.53]{BGHT} since 
the idempotent $e$ is defined over $ \Z[\zeta_N,\frac{1}{2N}]^+$. However, 
hitting this operation has a bounded denominator at 2 and thus, we can work at the place above 2.  
Since $|H_0|=N^{N-1}$ is a unit in $\Z[\zeta_N,\frac{1}{N}]$, the $H_0$-projection defines 
isomorphism 
\begin{equation}\label{summands}
V_\lambda\otimes_{\Z[\zeta_N,\frac{1}{N}]^+_\lambda}\Z\Big[\zeta_N,\frac{1}{N}\Big]_\lambda 
\stackrel{\sim}{\lra} U_\lambda\otimes_{\Z_\ell} \Big(\frac{1}{2}\Z_\ell \Big)\simeq U_\la
\end{equation}
as a smooth sheaf of $\Z[\zeta_N,\frac{1}{N}]_\lambda$-modules where 
$U_\la$ is the direct summand of the smooth sheave 
$R^{N-2}\pi_\ast \Z[\zeta_N,\frac{1}{N}]_\la$ where $\xi\in H_0$ acts as 
$\tau(\xi)$.  

The cup product induces a perfect alternating pairing $$\langle \ast, \ast\rangle: 
R^{N-2}\pi_\ast \Z\Big[\zeta_N,\frac{1}{N}\Big]^+_\lambda\Big(\frac{N-n-1}{2}\Big)\times 
R^{N-2}\pi_\ast \Z\Big[\zeta_N,\frac{1}{N}\Big]^+_\lambda\Big(\frac{N-n-1}{2}\Big) 
\lra \Z\Big[\zeta_N,\frac{1}{N}\Big]^+_\lambda(1-n).$$
It satisfies $$\langle e^\ast x,e^\ast y \rangle=\langle e_\ast e^\ast x,y \rangle
=\langle e^\ast x, y \rangle$$ since $e^\ast=e_\ast$ and  $e_\ast e^\ast=(e^\ast)^2=e^\ast$. 
Thus, we also have perfect alternating pairings  
$$\langle \ast, \ast\rangle: V_\lambda\Big(\frac{N-n-1}{2}\Big)\times V_\lambda
\Big(\frac{N-n-1}{2}\Big)
\lra \Z\Big[\zeta_N,\frac{1}{N}\Big]^+_\lambda(1-n)$$
and 
$$\langle \ast, \ast\rangle: (V_\lambda/\lambda V_\lambda)\Big(\frac{N-n-1}{2}\Big)\times 
(V_\lambda/\lambda V_\lambda)\Big(\frac{N-n-1}{2}\Big)
\lra \F_\lambda (1-n)$$
where $\F_\lambda$ stands for the residue field of $\lambda$. 
If $\lambda\nmid 2$, then $V_\lambda/\lambda=e^\ast R^{N-2}\pi_\ast (\Z\Big[\zeta_N,\frac{1}{N}\Big]^+/\lambda)=:V[\lambda]$ and thus, for any non-zero ideal $\frak n'=\prod_{i}\lambda^{n_i}_i$ of $\Z\Big[\zeta_N,\frac{1}{N}\Big]^+$ coprime to 2, we have 
$$V[\frak n']:=e^\ast R^{N-2}\pi_\ast \Big(\Z\Big[\zeta_N,\frac{1}{N}\Big]^+/\frak n'\Big)=
\prod_i V_{\lambda_i}/\frak \lambda^{n_i}_i$$
which inherits a perfect alternating pairing as above. 

\begin{lem}Keep the notation being as above. 
The sheaf $V_\lambda$ is locally free of rank $n$ over $\Z\Big[\zeta_N,\frac{1}{N}\Big]^+_\lambda$. 
\end{lem}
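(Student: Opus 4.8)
The plan is to reduce the assertion to the elementary fact that a finitely generated torsion-free module over a discrete valuation ring is free, and then to import the rank from the already-understood odd-characteristic picture.

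First I would record the structural input. Since $\pi\colon Y\to T_0$ is smooth and projective of relative dimension $N-2$ and $\ell$ is invertible on $T_0/\Z[\zeta_N,\frac{1}{N\ell}]^+$, the sheaf $\mathcal F:=R^{N-2}\pi_\ast\Z[\zeta_N,\frac1N]^+_\lambda$ is lisse, and, as recalled above, its geometric stalks are torsion free because each fibre is a smooth hypersurface in $\mathbb P^{N-1}$ (Lefschetz hyperplane theorem and Poincar\'e duality, cf. \cite[Theorem 2.1]{EM}). Put $\O:=\Z[\zeta_N,\frac1N]^+_\lambda$, a complete discrete valuation ring, and fix a geometric point $\bar t$ of $T_0$. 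Under the equivalence between lisse $\O$-sheaves on $T_0$ and finitely generated $\O$-modules equipped with a continuous $\pi_1(T_0,\bar t)$-action, the sheaf $\mathcal F$ corresponds to a \emph{free} $\O$-module $M$ carrying a continuous $\pi_1$-action.

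Next I would handle the idempotent, being careful at $\lambda\mid 2$. A priori $e$ lies only in $\Z[\zeta_N,\frac1{2N}]^+[H]$, so for $\lambda\mid 2$ the operator $e^\ast$ is merely an endomorphism of $M\otimes_{\O}\mathrm{Frac}(\O)$ and not of $M$; this is exactly the ``bounded denominator at $2$'' observation made before the lemma, since $2e$ already has coefficients in $\Z[\zeta_N,\frac1N]^+$. Because $e$ is built from automorphisms of $Y$ over $T_0$ (the involution $\iota$ and the $H_0$-action), $e^\ast$ commutes with the $\pi_1$-action, so $V_\lambda=e^\ast M$ is a $\pi_1$-stable $\O$-submodule of the vector space $M\otimes_{\O}\mathrm{Frac}(\O)$. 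It is finitely generated, being the image of $M$ under $\tfrac12(2e)^\ast$, and torsion free; hence it is free over the discrete valuation ring $\O$, and therefore $V_\lambda$ is itself a lisse $\O$-sheaf, of some rank $r_0$. (Alternatively one can carry this out over $\Z[\zeta_N,\frac1N]_\lambda$ using the $\tau$-isotypic summand $U_\lambda$ and the isomorphism \eqref{summands}, then descend along the faithfully flat map $\O\to\Z[\zeta_N,\frac1N]_\lambda$.)

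Finally I would identify $r_0=n$. By construction $r_0=\dim_{\bQl}e^\ast H^{N-2}(Y_{\bar t},\bQl)$, which by Artin's comparison theorem (the classes defining $e$ agreeing in Betti and \'etale cohomology) equals $\dim_{\C}e^\ast H^{N-2}(Y_t(\C),\C)$ for $t\in T_0(\C)$; this dimension is $n$ by the analysis of the Dwork family in \cite{BGHT} (following \cite{HSBT}), and it is of course insensitive to the residue characteristic. I expect the only genuine content to be this rank input together with the bookkeeping at $\lambda\mid 2$ — the local freeness itself is formal once one has the discrete valuation ring $\O$ and the commutation of $e^\ast$ with $\pi_1$ — so in the write-up I would simply quote $\dim e^\ast H^{N-2}=n$ from the odd-characteristic theory and stress that the one new point is the ``bounded denominator'' remark, which is what lets the whole argument run at the places above $2$.
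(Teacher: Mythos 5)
Your proposal is correct and follows essentially the same route as the paper: the paper likewise observes that $V_\lambda$ inherits finite generation and torsion-freeness from $R^{N-2}\pi_\ast\Z[\zeta_N,\frac1N]^+_\lambda$ (with the bounded-denominator remark at $2$ already made just before the lemma), concludes freeness over the discrete valuation ring, and reads off the rank $n$ from the generic-fibre computation of \cite[Lemma 4.1-2]{BGHT}. Your write-up is merely more explicit about the $\pi_1$-equivariance of $e^\ast$ and the comparison with Betti cohomology, which is exactly what the cited BGHT lemma encapsulates.
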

\begin{proof}Since 
 $R^{N-2}\pi_\ast \Z\Big[\zeta_N,\frac{1}{N}\Big]^+_\lambda$ is torsion free and finitely 
 generated  over $\Z\Big[\zeta_N,\frac{1}{N}\Big]^+_\lambda$, so is $V_\lambda$. 
The claim follows since 
$V_\lambda \otimes_{\Z[\zeta_N,\frac{1}{N}]^+_\lambda}{\rm Frac}(\Z\Big[\zeta_N,\frac{1}{N}\Big]^+_\lambda$ is locally free of rank $n$ by 
\cite[Lemma 4.1-2.]{BGHT}. 
\end{proof}

The next result follows from \cite[Lemma 5.1]{BGHT}

\begin{lem}\label{pnotl}Let $\lambda$ be a non-zero primes of $\Z\Big[\zeta_N,\frac{1}{N}\Big]^+$. Let $F/\Q(\zeta_N)^+$ be a totally real finite extension and $v$ a 
finite place of $F$ whose residue characteristic is different from one of $\lambda$. 
\begin{enumerate}
\item  If $t\in \O_F$ with $\ord_v(t^N-1)=0$, then the action of $G_F$ on 
$V_{\lambda,t}$ is unramified at $v$.  
\item If $t\in F\setminus\O_F$, then the action of $G_F$ on 
$V_{\lambda,t}$ is tamely ramified at $v$. 
A topological generator of the tame inertia group at $v$ acts via a unipotent matrix  
with the eigenpolynomial $(X-1)^n$. Further, a Frobenius lift has the characteristic polynomial 
$\ds\prod_{i=0}^{n-1}(X-\alpha q_v^i)$ where $q_v=\sharp O_K/v$ and $\alpha\in\{\pm 1\}$. 
\end{enumerate}
\end{lem}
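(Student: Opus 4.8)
The plan is to deduce both statements from the geometric analysis of the Dwork family already carried out in \cite[Lemma 5.1]{BGHT}, the only genuinely new point being to check that this analysis does not use the residue characteristic $\ell$ of $\lambda$ to be odd; it uses only that the residue characteristic of $v$ differs from $\ell$, which is our standing hypothesis. First I would record the structural input from \S\ref{smoothsheaves}: $V_\lambda$ is a lisse $\lambda$-adic sheaf on $T_0/\Z[\zeta_N,\frac{1}{N\ell}]^+$, locally free of rank $n$ by the preceding lemma; $V_{\lambda,t}$ is a $G_F$-stable direct summand of $H^{N-2}_{\text{\'et}}(Y_{t,\bQ},\Z[\zeta_N,\frac{1}{N}]^+_\lambda)$, cut out by the idempotent $e$, which is available even at $\lambda\mid 2$ by the bounded-denominator remark of \S\ref{smoothsheaves}; and it carries the perfect alternating pairing constructed there. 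With this in hand, both assertions are local at $v$.

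For part (1), the hypotheses $t\in\O_F$ and $\ord_v(t^N-1)=0$ say precisely that $t$ extends to an $\O_{F_v}$-valued point of $T_0$, so $Y_t$ spreads out to a smooth proper $\O_{F_v}$-scheme. Since $v\nmid\ell$ (and, when $\lambda\mid 2$, also $v\nmid 2$, so $e$ is defined $v$-integrally), the smooth and proper base change theorem gives that $H^{N-2}_{\text{\'et}}(Y_{t,\bQ},\Z[\zeta_N,\frac{1}{N}]^+_\lambda)$ is an unramified $G_{F_v}$-module; applying $e^\ast$ yields the same for $V_{\lambda,t}$. This step is routine.

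For part (2), the hypothesis $t\in F\setminus\O_F$ is to be used at a place $v$ where $t$ fails to be integral, so $\ord_v(t)<0$, equivalently $1/t$ lies in the maximal ideal of $\O_{F_v}$, and geometrically $t$ specialises to the boundary point $\infty\in\mathbb{P}^1\setminus T_0$. The key input, established in \cite[Section~5]{BGHT}, is that the degeneration of $Y_N$ at $\infty$ is maximally unipotent: dividing the defining equation by $Nt$, the limiting fibre is the union of coordinate hyperplanes $X_1\cdots X_N=0$, and the associated nearby cycles can be written down explicitly. Because $v\nmid\ell$, wild inertia at $v$ acts trivially, so the local monodromy is tame; a topological generator of tame inertia acts unipotently, and on the rank-$n$ summand $V_{\lambda,t}$ its eigenpolynomial is $(X-1)^n$ (BGHT in fact show it is a single Jordan block). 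The $n$ one-dimensional graded pieces of the monodromy-weight filtration on $V_{\lambda,t}$ are then successive Tate twists of one another, so the characteristic polynomial of a Frobenius lift is $\prod_{i=0}^{n-1}(X-\alpha q_v^i)$ with $\alpha$ the Frobenius eigenvalue on the bottom graded piece; the explicit description of that piece in \cite[Section~5]{BGHT} identifies $\alpha$ with the value at Frobenius of an at-most-quadratic unramified character, whence $\alpha\in\{\pm 1\}$. None of these steps sees $\ell$.

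I do not anticipate a real obstacle, since the substance is already in \cite{BGHT}; the one point I would check carefully — and the closest thing to a difficulty — is that $V_\lambda$ is still a rank-$n$, symplectically self-dual lisse sheaf at primes $\lambda\mid 2$, which is exactly what the bounded-denominator remark of \S\ref{smoothsheaves} together with the preceding lemma provide. Given that, the $v\nmid\ell$ local computation of \cite[Lemma 5.1]{BGHT} applies verbatim in residue characteristic two, and in particular the tame monodromy and the sign $\alpha$ are unchanged.
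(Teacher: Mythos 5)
Your proposal is correct and follows the same route as the paper, which gives no argument at all beyond the single line ``The next result follows from \cite[Lemma 5.1]{BGHT}.'' You have simply supplied the details of that citation (smooth proper base change for (1), the tame maximally unipotent degeneration at $\infty$ for (2)) together with the verification that nothing in the BGHT argument breaks when $\lambda\mid 2$, which is exactly the point the paper is implicitly relying on.
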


\begin{lem}\label{p=l}Let $\lambda$ be a non-zero prime of $\Z\Big[\zeta_N,\frac{1}{N}\Big]^+$. Let $F/\Q(\zeta_N)^+$ be a totally real finite extension and $v$ a 
finite place of $F$ whose residue characteristic is the same as one of $\lambda$. 
\begin{enumerate}
\item $V_{\lambda,t}|_{G_{F,v}}$ is a de Rham representation of 
Hodge-Tate weights ${\rm HT}_\tau=\{0,1,\ldots,n-1\}$ for any continuous embedding 
$F\hookrightarrow \overline{\Q(\zeta_N)}_\lambda$.   
\item  If $t\in \O_F$ with $\ord_v(t^N-1)=0$, then $V_{\lambda,t}|_{G_{F,v}}$  is crystalline at $v$.  
\item If $t\in F$ with $\ord_v(t)<0$, then $V_{\lambda,t}|_{G_{F,v}}$  is potentially ordinary at $v$.  
\end{enumerate}
\end{lem}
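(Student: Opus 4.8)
The plan is to reduce everything to the known structure of the Dwork family cohomology and standard $p$-adic Hodge theory for smooth proper families. Throughout, fix a finite place $v$ of $F$ above $\ell$, where $\ell$ is the residue characteristic of $\lambda$, and write $t$ for the specialization parameter. The sheaf $V_\lambda$ is a direct summand cut out by the idempotent $e$ from $R^{N-2}\pi_\ast \Z[\zeta_N,\frac1N]^+_\lambda$, and by the previous lemma it is locally free of rank $n$; its fiber $V_{\lambda,t}$ is a $G_{F,v}$-stable summand of $H^{N-2}_{\text{\'et}}(Y_{t,\bQ},\Z[\zeta_N,\frac1N]^+_\lambda)$. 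Since all the statements are about $G_{F,v}$-representations on an $\ell$-adic summand of the middle cohomology of a smooth projective variety, the first move is to invoke the comparison theorems and known integral/$p$-adic Hodge-theoretic properties of such cohomology, then track how the idempotent $e$ and the Tate twists interact with these properties.

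For part (1), I would argue as follows. When $t\in\O_F$ with $\ord_v(t^N-1)=0$, the fiber $Y_t$ has good reduction at $v$, so $H^{N-2}_{\text{\'et}}(Y_{t,\bQ},\Q_\ell)$ is crystalline (smooth proper base change plus Fontaine–Messing / Faltings), hence in particular de Rham; for general $t\in F$ the variety $Y_t$ is still smooth and projective over $F_v$, so its \'etale cohomology is de Rham by Tsuji's $C_{\mathrm{dR}}$ (or by $\ell$-adic smooth proper descent after a finite base change together with $C_{\mathrm{pst}}$). The summand $V_{\lambda,t}\otimes\bQ_\lambda$ inherits the de Rham property since $e$ is an algebraic correspondence acting on de Rham cohomology. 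For the Hodge–Tate weights, I would use the fact, recalled in \cite[Section 4 and 5]{BGHT} (and ultimately going back to the Katz/Hodge-theoretic description of the Dwork motive), that the Hodge numbers of the summand $U_\lambda$ — equivalently, the Hodge decomposition of the relevant piece of $H^{N-2}$ — consist of $n$ one-dimensional pieces in consecutive degrees; combined with the normalization built into $V_\lambda$ via $\iota$ and $\tau$, the Hodge–Tate weights are $\{0,1,\ldots,n-1\}$ for every embedding $F\hookrightarrow \overline{\Q(\zeta_N)}_\lambda$. I expect this weight bookkeeping — matching the Hodge filtration jumps on the $e$-part with the asserted set — to be the main point requiring care, though it is essentially extracted from \cite{BGHT}.

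For part (2), with $\ord_v(t^N-1)=0$ the fiber $Y_t/F_v$ has good reduction, so $H^{N-2}_{\text{\'et}}(Y_{t,\bQ},\Q_\ell)$ is crystalline by the $C_{\mathrm{cris}}$ comparison; again $e$ is a correspondence preserving crystallinity (the category of crystalline representations is stable under subquotients and direct summands), so $V_{\lambda,t}|_{G_{F,v}}$ is crystalline. For part (3), when $\ord_v(t)<0$ I would pass to the ``$\infty$-neighbourhood'' of the Dwork family where the degeneration is maximally unipotent: after a finite totally ramified base change, $Y_t$ acquires semistable reduction and the monodromy is unipotent on the $e$-part (by Lemma \ref{pnotl}-(2) the tame inertia acts via a single unipotent Jordan block with eigenpolynomial $(X-1)^n$ in the $\ell\ne\mathrm{char}$ case, and the same combinatorial picture governs the weight/monodromy filtration here). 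A maximally unipotent degeneration of a motive with Hodge–Tate weights $0,1,\ldots,n-1$ and a full-length monodromy operator gives, by Fontaine's theory of semistable representations together with the explicit shape of the $\mathrm{Fil}$ and $N$, a representation that is ordinary (an iterated extension of characters $\chi\varepsilon^{i}$) after restriction to inertia — i.e.\ potentially ordinary at $v$. This can be made precise by citing the relevant statement in \cite{BGHT} (their Lemma 5.1 and surrounding discussion) which already establishes potential ordinarity of the Dwork summand at such $t$; the only thing to check is that the argument is insensitive to whether $\ell=2$, which it is, since it uses only the shape of the monodromy filtration and not any inversion of $2$.

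The one genuine subtlety, and the step I would flag as the main obstacle, is the case $\ell=2$: the idempotent $e$ a priori only lives in $\Z[\zeta_N,\frac1{2N}]^+[H]$, so one must justify — as sketched in the paragraph defining $V_\lambda$ — that $e^\ast$ extends to an operator on the $\lambda$-adic cohomology at places above $2$ with bounded denominators, so that $V_\lambda$ and its fibers are honest $G_{F,v}$-stable lattices and the pairing is perfect. Granting that (which is asserted in the text around \eqref{summands}), the $p$-adic Hodge-theoretic conclusions (1)–(3) are characteristic-independent: de Rham-ness, crystallinity, and potential ordinarity are all properties of $\bQ_\ell$-representations stable under passing to direct summands cut out by rational correspondences, so the integral subtleties at $2$ do not affect the statements, which are about $V_{\lambda,t}$ as a $G_{F,v}$-representation over $\mathrm{Frac}(\Z[\zeta_N,\frac1N]^+_\lambda)$.
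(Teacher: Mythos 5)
Your treatment of parts (1) and (2) matches the paper's, which simply cites \cite[Lemma 5.3-(1),(2)]{BGHT}: good reduction gives crystallinity, smoothness and properness of the generic fibre give the de Rham property in general, and the Hodge--Tate weights are read off from the Hodge numbers of the Dwork summand. The remark about the idempotent $e$ having bounded denominators at $2$ is also consistent with the text; none of the $p$-adic Hodge-theoretic conclusions are sensitive to $\ell=2$ once one works rationally.

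For part (3) you take a genuinely different route from the paper, and it has a gap at its central step. The paper argues globally: by \cite[Theorem B]{HSBT} the compatible system $(V_{\lambda',t})_{\lambda'}$ is potentially automorphic, so over a finite totally real extension it corresponds to a cuspidal automorphic representation $\pi$; Lemma \ref{pnotl}-(2) together with local-global compatibility at residue characteristic different from $\ell$ \cite{TaYo} shows $\pi_v$ is an unramified twist of Steinberg; and then Caraiani's $\ell=p$ local-global compatibility \cite{Ca} transfers the maximally unipotent monodromy to the $\lambda$-adic member at $v$, from which potential ordinarity follows. Your proposed argument is purely local: semistable reduction after a ramified base change, maximally unipotent $N$, and then the weak-admissibility computation showing that a semistable representation with Hodge--Tate weights $0,\dots,n-1$ and $N^{n-1}\neq 0$ is an iterated extension of characters $\chi\varepsilon^{i}$. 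That last computation is correct, but the assertion that ``the same combinatorial picture governs the weight/monodromy filtration here'' is exactly the point at issue: Lemma \ref{pnotl}-(2) only controls the inertia action when the residue characteristic of $v$ is \emph{different} from that of $\lambda$, and there is no a priori reason that the monodromy operator on $D_{\mathrm{st}}$ at $v\mid \ell$ has a single Jordan block (independence of $\ell$ for the monodromy filtration at $\ell=p$ is not free). Establishing it requires either an explicit semistable model of $Y_t$ near $t=\infty$ and a log-crystalline (Hyodo--Kato) computation of $N$ on the $e$-part --- which you do not carry out --- or precisely the compatible-system plus local-global-compatibility argument the paper uses. As written, part (3) is not proved.
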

\begin{proof}The first and second claims follow from  \cite[Lemma 5.3-1,2]{BGHT}. 
By potential automorphy of the weakly compatible system $(V_{\lambda',t})_{\lambda'}$ (\cite[Theorem B]{HSBT}), there exists a 
finite totally real extension $M/F$ and a regular algebraic self-dual cuspidal 
automorphic representation $\pi$ of $\GL_n(\A_{M})$ such that $V_{\lambda',t}\simeq 
\rho_{\lambda',\iota}$. Choosing $\lambda'$ whose residue characteristic different from one of 
$\lambda$, then by Lemma \ref{pnotl}-(3) and the local-global compatibility for 
the different residue characteristic case \cite{TaYo}, $\pi_{v}$ is a twisted Steinberg representation. 
Then, the claim follows from \cite[Theorem A]{Ca}.    
\end{proof}

For any scheme $X$ over a totally real field $F$ included in $\bQ$, a point $P\in X(\bQ)$ is said to be totally real if $\iota(P)\in X(\C)$ is fixed by any complex conjugation for any embedding  
$\iota:F\hookrightarrow \C$. We can simply write $P\in X(\R)$ for such a point.  

\begin{lem}\label{infinite}
Let $\lambda$ be a non-zero prime of $\Z\Big[\zeta_N,\frac{1}{N}\Big]^+$. Let $F/\Q(\zeta_N)^+$ be a totally real finite extension
\begin{enumerate}
\item for any $t\in T_0(\bQ)\cap T_0(\R)$ $($(hence, $t$ is a totally real point$)$, 
$V_{\lambda,t}/\la$ is totally odd if $\la\nmid 2$.
\item Assume $\la|2$.  For each infinite place $v$ of $F$, there exists a non-empty open 
subset $\widetilde{\Omega}_v$ of $T_0(F_v)$ including the point $t=0$ such that  
$V_{\lambda,t}/\la$ is strongly residually odd at $v$ for any $t\in \widetilde{\Omega}_v$. 
\end{enumerate}
\end{lem}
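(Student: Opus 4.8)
The plan is to treat the two parts separately, with part (1) following essentially from the $p>2$ theory in \cite{BGHT} and the real structure, and part (2) being the genuinely new $2$-adic statement, handled by a deformation/continuity argument anchored at $t=0$.

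For part (1): since $\lambda \nmid 2$, the pairing on $(V_\lambda/\lambda)(\tfrac{N-n-1}{2})$ is a perfect alternating pairing valued in $\F_\lambda$, so the residual representation at a totally real point $t$ carries a polarization with sign $\ve_v$, and I want to show $\ve_v = 1$ at every infinite place $v$. The point is that for a totally real $t$, complex conjugation $c_v$ acts on $V_{\lambda,t}$ (a piece of the middle cohomology of the real variety $Y_t$) and the action on the Hodge/Betti realization identifies $\ve_v$ with a sign coming from the de Rham comparison: the Hodge-Tate weights are $\{0,1,\ldots,n-1\}$ by Lemma \ref{p=l}-(1), and one reads off the sign of the polarization from the parity structure of this weight set together with the fact that $c_v$ swaps $H^{p,q}$ with $H^{q,p}$. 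Concretely, I would compare with \cite[Lemma 5.1, Lemma 5.3]{BGHT} or \cite[Section 4,5]{BGHT}, where the analogous computation is carried out for $\ell$ odd; the oddness at $v$ (i.e. $\ve_v=1$) for all $v$ is exactly the ``totally odd'' conclusion, and the argument is uniform in $v$ since $t$ is totally real, so totally odd follows.

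For part (2), with $\lambda \mid 2$, I want a non-empty open $\widetilde{\Omega}_v \subset T_0(F_v)$ containing $t=0$ on which $V_{\lambda,t}/\lambda$ is strongly residually odd at $v$, i.e. (after choosing a symplectic basis so that the residual representation lands in $\GSp_{2n}(\F_2) = \Sp_{2n}(\F_2)$ — note $n$ here is even so $2n$ is the relevant size, and the multiplier is trivial mod $2$) the element $\br(c_v)$ is conjugate to $(1_{2n},1)j$, equivalently the alternating form $B$ is \emph{alternating} in the strong sense $B(x,x)=0$ for all $x$. The strategy:
\begin{enumerate}
\item First establish it at $t=0$. At $t=0$ the fiber $Y_0$ is the Fermat hypersurface $X_1^N + \cdots + X_N^N = 0$, whose cohomology decomposes into explicit Jacobi-sum pieces; the relevant $e^*$-summand can be written down, and the action of complex conjugation $c_v$ on it can be computed explicitly. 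I would check directly that on this piece $c_v$ is alternating — using that $c_v$ negates the CM structure coming from $\mu_N$ and interchanges conjugate Jacobi-sum characters, so it has no fixed vectors on which $B$ restricts nondegenerately, forcing $B(x,x)=0$ mod $2$. (Alternatively, note that $t=0$ is a totally real point, invoke a mod-$2$ analogue of part (1), but the sign argument degenerates mod $2$, so the direct Fermat computation is safer.)
\item Then spread out: the condition ``$\br(c_v)$ conjugate to $(1_{2n},1)j$'' is, on the one hand, determined by the conjugacy class of the image of $c_v$, which varies locally constantly as $t$ moves in the $v$-adic topology — more precisely, $V_{\lambda}/\lambda$ is a smooth $\F_\lambda$-sheaf on $T_0$, so for $t$ in a small enough $v$-adic neighborhood of $0$ the pair $(\br_t(c_v), B_t)$ is conjugate (compatibly) to the one at $t=0$. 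Openness of this locus gives the desired $\widetilde{\Omega}_v$.
\end{enumerate}

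The main obstacle I expect is Step (i): pinning down the action of complex conjugation on the relevant $e^*$-summand of $H^{N-2}(Y_0)$ \emph{in characteristic $2$}, where the usual Hodge-theoretic sign computation collapses and one must instead argue integrally (at the place above $2$) and then reduce. This requires being careful that the idempotent $e$, though a priori defined over $\Z[\zeta_N, \tfrac{1}{2N}]^+$, has bounded denominators at $2$ as noted in the text, so that $e^* R^{N-2}\pi_* \Z[\zeta_N,\tfrac1N]^+_\lambda$ and its reduction make sense and carry the perfect alternating pairing; once that is in hand, the Fermat computation of $c_v$ on this lattice, reduced mod $2$, should yield that $c_v$ acts as an alternating symplectic involution, which is precisely strong residual oddness at $v$. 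A secondary technical point is to make the neighborhood $\widetilde{\Omega}_v$ genuinely open and nonempty — this is just local constancy of the monodromy of a lisse sheaf in the $v$-adic topology, but one should record why $t=0\in T_0(F_v)$ (it is, since $0^N - 1 = -1$ is a unit) and why the conjugacy can be taken to respect the symplectic form.
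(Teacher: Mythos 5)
Your proposal follows essentially the same route as the paper: part (1) is the standard odd-characteristic complex-conjugation argument (the paper simply cites \cite[Lemma 3.2]{Ribet} together with \cite[p.71]{BGHT}), and part (2) is done exactly as you outline — an explicit computation at the Fermat point $t=0$, where \cite[Lemma 4.1]{BGHT} decomposes $V_{\la,0}$ into inductions of rank-one characters from $G_{F(\zeta_N)}$ so that $c_v$ acts as $\diag(s,\ldots,s)$ (your ``interchanging conjugate Jacobi-sum characters''), followed by local constancy of the conjugacy class of $c_v$ in the Euclidean topology, which the paper realizes via the geometrically connected finite \'etale covering trivializing $\br_{\la,T_0}$. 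The one ingredient you leave implicit and the paper makes explicit is the passage from the chosen lattice (equivalently, the semisimplification) back to $V_{\la,0}/\la$ itself, for which the paper invokes \cite[Lemma 2.16-(ii)]{Th2}.
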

\begin{proof}The first claim follows from the argument for \cite[Lemma 3.2]{Ribet} 
with \cite[p.71, line 10-13]{BGHT}) since $\la$ has odd residual characteristic.  

For the second claim, let $F/\Q(\zeta_N)^+$ be a totally real finite extension and 
$\pi_1(T_0)$ be the \'etale fundamental group of $T_0/F$. Then, we have a Galois 
representation $\rho_{\la,T_0}:\pi_1(T_0)\lra \GSp_{n}(\Q(\zeta_N)^+_\la)$.  
An argument for \cite[Section 3.4.1]{Hui} shows there exists a closed point $t\in T_0/F$ 
such that $\rho_{\la,T_0}(\pi_1(T_0))=\rho_{\la,t}(G_{F(t)})$. 
The argument for \cite[Lemma 3.2]{Ribet} shows the action of each complex conjugation is 
non-trivial and thus $t$ has to be totally real.  
When $t=0$, by \cite[Lemma 4.1]{BGHT}, we have $V_{\la,0}\simeq 
\oplus_{i=1}^\frac{n}{2}{\rm Ind}^{G_F}_{G_{F(\zeta_N)}}U_{\frac{N-1}{2}+i,\la}$ as a   
$\Q(\zeta_N)_\la[G_F]$-module (see loc.cit. for the smooth sheave $U_{i,\la}$ of rank one). 
Thus, we can choose a lattice $T_{\la,0}$ of  
$V_{\la,0}\otimes_{\Z[\frac{1}{N},\zeta_N]^+_\la} F_\la$ such that each complex conjugation 
$c_v$ acts as the matrix multiplication of 
$\diag(\overbrace{s,\ldots,s}^{\frac{n}{2}})$ where 
$s=\begin{pmatrix}
0 & 1 \\
1 & 0 
\end{pmatrix}
$. Thus, $T_{\la,0}/\la$ is strongly 
residually totally odd and so is $(V_{\la,0}/\la)^{{\rm ss}}$. 
By \cite[Lemma 2.16-(ii)]{Th2}, we conclude that $V_{\la,0}/\la$ is also strongly 
residually totally odd.   
Let $X$ be the \'etale covering of $T_0/F$ corresponding to the kernel of 
$\br_{\la,T_0}:=(\rho_{\la,T_0}$ mod $\la$). 
Note that $X$ is defined over a totally real extension $F'/F$. 
The results in Section \ref{Mgroups} shows 
$X$ is geometrically connected. Since the fiber at $t=0$ consists of totally real points. 
Thus, by using Euclidean topology, we may take $\widetilde{\Omega}_v$ as 
a sufficiently small open neighborhood at $t=0$ of $T_0(F_v)$ such that 
each fiber of $t\in \widetilde{\Omega}_v$ consists of totally real points. The set 
$\widetilde{\Omega}_v$  is the desired one.      
\end{proof}

\begin{rmk}The Legendre family $E_t:y^2=x(x-1)(x-t)$ over 
$(\mathbb{P}^1\setminus\{0,1,\infty\})/\Q$ yields 
the trivial mod 2 Galois representation at each fiber. 
The image of the monodromy representation of the topological fundamental group 
$\pi_1(\mathbb{P}^1(\C)\setminus\{0,1,\infty\},\ast)$ for this family is $\G(2)=\{\gamma\in 
\SL_2(\Z)\ |\ \gamma\equiv 1_2\ {\rm mod}\ 2\}$ $($cf. \cite[Theorem 1.1.7]{CMSP}$)$ so that 
its mod 2 residual image is trivial as well. On the other hand, we consider  
the Hesse pencil $Y=Y_3:X^3_1+X^3_2+X^3_3-3tX_1X_2X_3=0$ over $(\mathbb{P}^1\setminus\{0,1,\infty\})/\Q$. Then, we can check easily that the special fiber at $t=0$ has a 
mod 2 Galois representation of $G_F$ for any totally real field $F$ such that any complex conjugation acts as 
the matrix multiplication of 
$s=\begin{pmatrix}
0 & 1 \\
1 & 0 
\end{pmatrix}$.  
Thus, to check the oddness for the even residual characteristic case is subtle and we 
need an additional information such as a specialization argument as in Lemma \ref{infinite}.
\end{rmk}

\subsection{Monodromy groups}\label{Mgroups}
Next we consider the Betti side. 
We define the locally constant sheaf 
\begin{equation}\label{VB}
V_B:=e^\ast R^{N-2}\pi_\ast \Z\Big[\frac{1}{N},\zeta_N\Big]^+
\end{equation}
of rank $n$ over $\Z\Big[\frac{1}{N},\zeta_N\Big]^+$. 
As observed in the \'etale case, we have a natural perfect alternating pairing
$$\langle \ast,\ast \rangle:V_B\times V_B\lra \Z\Big[\frac{1}{N},\zeta_N\Big]^+.$$
We introduce $\wT_0=\mathbb{P}^1\setminus\{0,1,\infty\}={\rm Spec}\hspace{0.5mm}
\Z[\frac{1}{N}][\wt,\frac{1}{\wt(1-\wt)}]$ and view $T_0\setminus\{0\}$ as a $\wT_0$-scheme by 
$T_0\setminus\{0\}\lra \wT_0, t\mapsto t^N=\wt$. As in \cite[p.55-56]{BGHT}, we have a twisted model $\widetilde{Y}\subset 
\mathbb{P}^{N-1}\times\wT_0$ of 
$Y$ over $\wT_0$  and the idenpotent $\widetilde{e}$ over $\Z\Big[\frac{1}{2N},\zeta_N\Big]^+$ 
such that 
$$\widetilde{V}_B:=\widetilde{e}^\ast R^{N-2}\widetilde{\pi}_\ast \Z\Big[\frac{1}{N},\zeta_N\Big]^+$$
is the locally constant sheaf of rank $n$ endowed with a natural perfect alternating pairing 
$$\langle \ast,\ast \rangle:\widetilde{V}_B\times \widetilde{V}_B\lra \Z\Big[\frac{1}{N},\zeta_N\Big]^+$$
as observed in the \'etale case. 
The pullback of the pair $(\widetilde{V}_B,\langle \ast,\ast \rangle)$ to 
$T_0(\C)\setminus \{0\}$ to the pair $(V_B,\langle \ast,\ast \rangle)$.

The fundamental group $\pi_1(\wT_0(\C),\wt)$ is generated by 
three elements $\gamma_0,\ \gamma_1$, and $\gamma_\infty$ with the relation 
$\gamma_0\gamma_1\gamma_\infty=1$ where $\gamma_x$ stands for 
a generator of the local monodromy group at $x\in\{0,1,\infty\}$. 

For any fixed $\wt\in \wT_0(\C)$, we have a monodromy representation 
$$\wrho_{\wt}:\pi_1(\wT_0(\C),\wt)\lra \Sp(\widetilde{V}_{B,\wt},\langle \ast,\ast \rangle)\simeq \Sp_{n}
\Big(\Z\Big[\frac{1}{N},\zeta_N\Big]^+\Big).$$
Similarly, for any fixed $t\in T_0(\C)$, we have a monodromy representation 
$$\rho_{t}:\pi_1(T_0(\C),t)\lra \Sp(V_{B,t},\langle \ast,\ast \rangle)\simeq \Sp_{n}
\Big(\Z\Big[\frac{1}{N},\zeta_N\Big]^+\Big).$$
For each non-zero prime ideal $\lambda$ of $\Z\Big[\frac{1}{N},\zeta_N\Big]^+$, 
we write $\wrho_{\wt,{\rm mod}\ \lambda}$ and 
$\rho_{t,{\rm mod}\ \lambda}$ for the reductions modulo $\lambda$ of 
$\wrho_{\wt}$ and $\rho_t$ respectively. 

\begin{lem}\label{red}Keep the notation being as above. For each $t\in T_0(\C)\setminus\{0\}$, 
let $\widetilde{t}=t^N\in\wT_0(\C)$. Then, ${\rm Im}(\wrho_{\wt,{\rm mod}\ \lambda})$ 
is isomorphic to  ${\rm Im}(\rho_{t,{\rm mod}\ \lambda})$ under the isomorphism 
$\Sp(\widetilde{V}_{B,\wt}/\lambda,\langle \ast,\ast \rangle)\stackrel{\sim}{\lra} 
\Sp(V_{B,t}/\lambda,\langle \ast,\ast \rangle)$ induced from the pullback from 
$\wT_0(\C)$ to $T_0(\C)\setminus\{0\}$.
\end{lem}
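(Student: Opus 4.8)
The plan is to reduce the statement to a computation with fundamental groups along the map $f\colon T_0(\C)\setminus\{0\}\lra \wT_0(\C)$, $t\mapsto t^N=\wt$, using two inputs: that $f$ is a connected Galois \'etale covering of degree $N$ (with group $\mu_N$ acting on the $t$-coordinate), and that the locally constant sheaf $V_B$ --- which a priori lives on all of $T_0(\C)$ --- restricts over $T_0(\C)\setminus\{0\}$ to $f^\ast\widetilde V_B$ compatibly with the alternating pairings, which is exactly the pullback statement recalled just before the lemma.

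First I would describe $f$ on fundamental groups. The covering $f$ is totally ramified over $\wt=0$ and over $\wt=\infty$ (since $t\mapsto t^N$ is, and the points $t=0,\infty$ do not lie in $T_0(\C)\setminus\{0\}$) and unramified over $\wt=1$ (whose fibre $\mu_N$ is entirely removed), so $f_\ast$ identifies $\pi_1(T_0(\C)\setminus\{0\},t)$ with the index-$N$ subgroup $K:=\ker(\pi_1(\wT_0(\C),\wt)\twoheadrightarrow\Z/N)$ attached to $f$, the surjection carrying $\gamma_0\mapsto 1$, $\gamma_1\mapsto 0$, $\gamma_\infty\mapsto -1$. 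On the other hand, deleting the single point $t=0$ from the Riemann surface $T_0(\C)$ gives a surjection $\iota_\ast\colon\pi_1(T_0(\C)\setminus\{0\},t)\twoheadrightarrow\pi_1(T_0(\C),t)$, whence $\mathrm{Im}(\rho_{t,{\rm mod}\ \lambda})=\mathrm{Im}(\rho_{t,{\rm mod}\ \lambda}\circ\iota_\ast)$. Because monodromy is compatible with restriction and pullback of local systems, under the canonical identification $V_{B,t}/\lambda=\widetilde V_{B,\wt}/\lambda$ --- hence $\Sp(V_{B,t}/\lambda,\langle\ast,\ast\rangle)=\Sp(\widetilde V_{B,\wt}/\lambda,\langle\ast,\ast\rangle)$, which is the isomorphism in the statement --- one has $\rho_{t,{\rm mod}\ \lambda}\circ\iota_\ast=\wrho_{\wt,{\rm mod}\ \lambda}\circ f_\ast$. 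Combining these, $\mathrm{Im}(\rho_{t,{\rm mod}\ \lambda})=\wrho_{\wt,{\rm mod}\ \lambda}(K)$, so it suffices to prove $\wrho_{\wt,{\rm mod}\ \lambda}(K)=\mathrm{Im}(\wrho_{\wt,{\rm mod}\ \lambda})=:G$.

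The point is that the inclusion $\wrho_{\wt,{\rm mod}\ \lambda}(K)\subseteq G$ is forced to be an equality. Since $K\trianglelefteq\pi_1(\wT_0(\C))$, the subgroup $\wrho_{\wt,{\rm mod}\ \lambda}(K)$ is normal in $G$, and $G/\wrho_{\wt,{\rm mod}\ \lambda}(K)$ is a quotient of $\pi_1(\wT_0(\C))/K\cong\Z/N$, hence cyclic of some order $d\mid N$. Using $\gamma_0\gamma_1\gamma_\infty=1$ together with $\gamma_1\in K$, the image of $\wrho_{\wt,{\rm mod}\ \lambda}(\gamma_\infty)$ generates this quotient. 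But the local monodromy of the Dwork family at $\wt=\infty$, where the fibre degenerates to the totally degenerate union of hyperplanes $X_1\cdots X_N=0$, is unipotent --- a single Jordan block, established in the monodromy analysis of \cite[Section 5]{BGHT} (and closely related to the $\ell$-adic statement of Lemma~\ref{pnotl}(2)) --- so $\wrho_{\wt,{\rm mod}\ \lambda}(\gamma_\infty)$ has order a power of the residue characteristic $\ell$, which is prime to $N$ since $N$ is invertible in $\Z[\frac1N,\zeta_N]^+$. Thus the order of the image of $\wrho_{\wt,{\rm mod}\ \lambda}(\gamma_\infty)$ in the cyclic group $G/\wrho_{\wt,{\rm mod}\ \lambda}(K)$ divides both $d$ and a power of $\ell$, hence equals $1$; as that image generates the group, $d=1$ and $\wrho_{\wt,{\rm mod}\ \lambda}(K)=G$, which is the assertion.

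The one non-formal ingredient is the unipotence of the local monodromy at the maximally degenerate fibre $\wt=\infty$; granting it, coprimality with $N$ finishes the argument, and I expect the bulk of the writing to be pinning down $f_\ast\pi_1(T_0(\C)\setminus\{0\},t)=K$ and the monodromy compatibility $\rho_{t}\circ\iota_\ast=\wrho_{\wt}\circ f_\ast$ together with the matching of alternating pairings, all of which is routine. As a sanity check: once Section~\ref{Mgroups} shows $G$ is perfect in the cases of interest, this last step becomes immediate, $\wrho_{\wt,{\rm mod}\ \lambda}(K)\supseteq[G,G]=G$, since $\wrho_{\wt,{\rm mod}\ \lambda}(K)$ is normal in $G$ with abelian quotient.
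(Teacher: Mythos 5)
Your proposal is correct and follows essentially the same route as the paper: the commutative diagram relating $\pi_1(T_0(\C)\setminus\{0\},t)$, $\pi_1(T_0(\C),t)$ and $\pi_1(\wT_0(\C),\wt)$, surjectivity of the puncture-filling map, and the fact that the pullback has cyclic cokernel of order $N$, killed by coprimality with the residue characteristic. The paper compresses the last step into ``since $\lambda$ is coprime to $N$, the diagram yields the claim''; your explicit justification via the unipotence of $\gamma_\infty$ (so its image has $p$-power order, forcing the cyclic quotient of order dividing $N$ to be trivial) is exactly the intended mechanism, and is the right one here since, unlike in the odd-characteristic case, the image need not be perfect.
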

\begin{proof}We mimic the proof of \cite[Corollary 4.7]{BGHT}.
we have a commutative diagram:
\[
  \xymatrix{
  &  \pi_1(\wT_0(\C),\wt) \ar[r]^{\wrho_{\wt,{\rm mod}\ \lambda}\hspace{5mm}}  & \Sp(\widetilde{V}_{B,\wt}/\lambda,\langle \ast,\ast \rangle) \ar[d]^{\simeq} \\
\pi_1(T_0(\C)\setminus\{0\},t) \ar[r]^{\hspace{3mm}{\rm rest}_\ast} \ar[ru]^{\text{pullback}} & \pi_1(T_0(\C),t) 
\ar[r]^{\rho_{t,{\rm mod}\ \lambda}\hspace{5mm}}   & \Sp(V_{B,t}/\lambda,\langle \ast,\ast \rangle)
  }
\]
where the pushforward of the restriction map is obviously surjective and the cokernel of the pullback is 
cyclic of order $N$. Since $\lambda$ is coprime to $N$, the diagram yields the claim. 
\end{proof}

Thus, the computation of the image of $\rho_{t,{\rm mod}\ \lambda}$ is reduced to one of 
$\wrho_{\wt,{\rm mod}\ \lambda}$. 
Before going further, we introduce two matrices $A,B$ in $\GL_n(\Z[\zeta_N]^+)$ defined as follows. 
Write 
$$(X-1)^n=X_n+A_1X^{n-1}+\cdots+A_n,\ \prod_{j=\frac{N-n+1}{2}}^{j=\frac{N+n-1}{2}}(X-\zeta^j_N)=
X^n+B_1X^{n-1}+\cdots+B_n\in \Z[\zeta_N]^+[X].$$
Note that $A_i=(-1)^i\ds\binom{n}{i}$ ($1\le i \le n$) and $B_n=1$. Then, we define 
\begin{equation}\label{AB}
A=
\begin{pmatrix}
        0 & 0        & \cdots & 0 &-A_n \\
        1 & 0        & \cdots & 0 &-A_{n-1} \\
        0 & 1        & \cdots & 0& -A_{n-2} \\
\vdots & \vdots & \ddots & \vdots  & \vdots \\
0 & 0 & \cdots & 1 & -A_1 \\
\end{pmatrix},\ 
B=
\begin{pmatrix}
        0 & 0        & \cdots & 0 &-B_n \\
        1 & 0        & \cdots & 0 &-B_{n-1} \\
        0 & 1        & \cdots & 0& -B_{n-2} \\
\vdots & \vdots & \ddots & \vdots  & \vdots \\
0 & 0 & \cdots & 1 & -B_1 \\
\end{pmatrix}.
\end{equation}
It is easy to see that there exists $P\in \GL_n(\Z)$ such that 
\begin{equation}\label{fullmonod}
P^{-1}AP=\begin{pmatrix}
        1 & 1     &0   & \cdots & 0  \\
        0 & 1      & \ddots  & \ddots & \vdots  \\
        0 & 0     & \ddots   & \ddots & 0  \\
\vdots & \vdots &\ddots & \ddots & 1   \\
0 & 0 & \cdots& 0 & 1  \\
\end{pmatrix}.
\end{equation}
In fact, we may take $P=(p_{ij})_{1\le i,j\le n}$ with 
$p_{ij}:=
\left\{\begin{array}{cl}
(-1)^{i+j}\binom{n-j}{i-1} & (i+j-1\le n)  \\
0  & (\text{otherwise})
\end{array}\right..
$

For each finite field $k$ of even characteristic and an even positive integer $n=2m$, 
by \cite[p.23, Proposition 1.5.39]{BHRD}, any non-degenerate quadratic map on $k^n$
is equivalent to one of the following quadratic maps:
$$F^+:=\sum_{i=1}^m x_ix_{n+1-i},\ F^-:=\Bigg(\sum_{i=1}^{m-1} x_ix_{n+1-i}\Bigg)+(x^2_m+x_mx_{m+1}+\mu x^2_{m+1})$$
where $\mu\in k^\times\setminus (k^\times)^2$. 
If we regard a quadratic map $F$ on $k^n$ as an algebraic quadratic map over $k$, then 
it is easy to see that $F$ is identically zero on $k^n$ if and only if $F$ is identically zero 
on $\overline{k}$. 
Then, we define 
$${\rm O}^\pm_n(k):=\{g\in \GL_n(k)\ |\ F^\pm(gx)=F^\pm(x),\ \forall x\in k^n\}.$$
Note that ${\rm O}^-_n(k)$ is independent of the choice of $\mu$ up to isomorphism over $k$. 
It follows ${\rm O}^\pm_n(k)\subsetneq \Sp_n(k)$ since $B(x,y):=F^\pm(x+y)-F^\pm(x)-F^\pm(y)=\ds\sum_{i=1}^n x_i y_{n+1-i}$ for $x,y\in k^n$ is 
a non-degenerate symplectic form. The inequality is a consequence of 
\cite[p.32, Theorem 1.6.22]{BHRD}.   

For each positive even integer $n$, let us consider $\F^{n+2}_2$ with 
a perfect alternating pairing $\langle \ast,\ast\rangle:\F^{n+2}_2\times \F^{n+2}_2\lra \F_2,\ (x,y)\mapsto 
\ds\sum_{i=1}^{n+2}x_iy_i$ and we let $S_{n+2}$ act on 
$V$ by permutation of coordinates. Put $W=\Big\{x\in \F^{n+2}_2\ \Big|\ \ds\sum_{i=1}^{n+2}x_i=0\Big\}$ 
and define a line $L=\langle (1,\ldots,1) \rangle\subset W$.  
Put $V:=W/L$ and we denote again by  $\langle\ast,\ast \rangle$ the pairing induced  
from the pairing on $\F^{n+2}_2$. It is easy to see that this pairing is perfect 
alternating. The group $S_{n+2}$ acts on $(V,\langle\ast,\ast \rangle)$ and it yields 
a faithful representation 
\begin{equation}\label{stan}
\theta_{n+2}:S_{n+2}\lra \Sp(\langle\ast,\ast \rangle)\simeq \Sp_n(\F_2)\subset \GL_n(\F_2).
\end{equation}
We regard $S_{n+1}$ as a fixed group of the letter $n+2$ inside $S_{n+2}$ and 
define $\theta_{n+1}:=\theta_{n+2}|_{S_{n+1}}$. 
We call $\theta_{n+2}$ (resp. $\theta_{n+1}$) the standard representation of 
$S_{n+2}$ (resp. $S_{n+1}$). When $n\ge 6$, $\theta_{n+2}$ (resp. $\theta_{n+1}$) is known 
by \cite[Theorem 1.1]{Wagner} as 
an absolutely irreducible faithful representation of $S_{n+2}$ (resp. $S_{n+1}$) 
with the least dimension $n$. We can check the same is true when $n=4$ by applying 
the restriction to $A_6$ or $A_5$ to 
\cite[p.133, Lemma 4.3 and 4.4]{Wagner}.

\begin{prop}\label{monod} Keep the notation in the previous lemma. 
Assume $n\ge 4$. 
Let $\lambda$ be a non-zero prime ideal of $\Z\Big[\frac{1}{N},\zeta_N\Big]^+$ and 
denote by $k_\lambda$ its residue field. Put $l_\lambda=k_\lambda$ if $N>n+1$ and 
$l_\lambda=\F_p$ if $N=n+1$ where $p$ is the rational prime under $\la$.  
\begin{enumerate}
\item If the characteristic of $k_\lambda$ is odd, then $\rho_{t,{\rm mod}\ \lambda}$ 
is surjective onto $\Sp_n(l_{\la})$. 
\item If $\lambda|2$, then the image of $\rho_{t,{\rm mod}\ \lambda}$  is isomorphic to one of the following groups: 
\begin{enumerate}
\item  $\Sp_n(l_\lambda)$;
\item ${\rm O}^{\pm}_n(l_\lambda)$;
\item $S_{n+1}$ if $l_\lambda=\F_2$. This case can happen only when $n=2^m$ 
for some $m\ge 2$; 
\item $S_{n+2}$ if $l_\lambda=\F_2$. This case can happen only when $n=2^m-2$ 
for some $m\ge 3$; 
\end{enumerate}
In the cases of {\rm(}c{\rm)} and {\rm(}d{\rm)}, $\rho_{t,{\rm mod}\ \lambda}$ 
factors through the standard representation $\theta_{n+1}$ or $\theta_{n+2}$ respectively. 
\end{enumerate}
\end{prop}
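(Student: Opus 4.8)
The plan is to reduce everything to the Betti monodromy of the twisted family over $\wT_0$ via Lemma \ref{red}, so it suffices to compute the image of $\wrho_{\wt,{\rm mod}\ \lambda}$ and in particular the images of the three local monodromy generators $\gamma_0,\gamma_1,\gamma_\infty$. First I would recall from \cite[Section 5]{BGHT} the explicit shape of these generators: over $\Z[\zeta_N]^+$, the generator $\gamma_\infty$ is (conjugate to) the matrix $A$ of (\ref{AB}), which by (\ref{fullmonod}) is a single unipotent Jordan block of size $n$, while $\gamma_0$ is (conjugate to) $B$, a matrix of finite order whose eigenvalues are the $\zeta^j_N$ for $\frac{N-n+1}{2}\le j\le \frac{N+n-1}{2}$, and $\gamma_1$ is a symplectic transvection (a pseudoreflection fixing a hyperplane). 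Reducing mod $\lambda$, the group $G_\lambda:={\rm Im}(\wrho_{\wt,{\rm mod}\ \lambda})$ is generated by the images of these three elements inside $\Sp_n(k_\lambda)$; since $\gamma_1$ reduces to a transvection, $G_\lambda$ is generated by transvections (its conjugacy class, coming from the orbit of $\gamma_1$ under the other generators, generates $G_\lambda$), and here the residual characteristic enters: the set of fixed hyperplanes of these transvections, together with the unipotent block $A$ and the semisimple element $B$, forces $G_\lambda$ to act irreducibly on $k_\lambda^n$ (using that $A$ has a cyclic vector and $B$ has distinct eigenvalues when $N>n+1$; when $N=n+1$ one passes to $\F_p$ because $B$ becomes trivial mod $\lambda$ and one only controls the $\F_p$-span).

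The heart of the argument is then the classification of subgroups of $\Sp_n(\overline{\F}_p)$, or of $\Sp_n(k_\lambda)$, that are irreducible and generated by transvections. For $p$ odd this is exactly Kantor's theorem (as used in \cite{BGHT}, \cite{HSBT}): such a group is all of $\Sp_n(l_\lambda)$ — combined with the fact that the transvections here have order $p$ and that $A$ forces the group not to be contained in a smaller-field or orthogonal-type subgroup, one gets surjectivity onto $\Sp_n(l_\lambda)$, which is part (1). For $p=2$ the analogous classification is the one underlying \cite[p.23, Proposition 1.5.39]{BHRD} / the references to $\GL_n$-transvection groups in characteristic $2$: a finite irreducible subgroup of $\Sp_n(\F_q)$ ($q=2^r$) generated by transvections is, up to conjugacy and up to the choice of a subfield $l_\lambda\subseteq k_\lambda$, one of $\Sp_n(l_\lambda)$, an orthogonal group ${\rm O}^\pm_n(l_\lambda)$ (the extra possibility in characteristic $2$, since ${\rm O}^\pm_n\subsetneq \Sp_n$ there), or — only over $\F_2$, where transvections coincide with certain permutation-type involutions — the image of a symmetric group $S_{n+1}$ or $S_{n+2}$ under the standard representation $\theta_{n+1}$ resp. $\theta_{n+2}$ of (\ref{stan}). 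I would then pin down the numerical constraints: $S_{n+2}\hookrightarrow \Sp_n(\F_2)$ via $\theta_{n+2}$ requires $W/L$ to be $n$-dimensional, i.e. $n+2-2=n$, which is automatic, but irreducibility of $\theta_{n+2}$ together with the transvection picture forces $n+2$ to be even and $n=2^m-2$ (the module $W/L$ is irreducible precisely when $n+2$ is not divisible by... — here one invokes that $\theta_{n+1},\theta_{n+2}$ are the minimal faithful irreducible representations by \cite[Theorem 1.1]{Wagner}, valid for $n\ge 6$ and checked for $n=4$ via restriction to $A_6$, $A_5$); similarly $S_{n+1}$ occurs only when $n=2^m$. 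Finally, in cases (c) and (d) one observes directly that the generators $\gamma_0,\gamma_1,\gamma_\infty$ all lie in the image of $\theta_{n+1}$ or $\theta_{n+2}$, so $\wrho_{\wt,{\rm mod}\ \lambda}$ factors through it.

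The main obstacle I anticipate is the characteristic-$2$ classification step: unlike the odd case, where Kantor's theorem gives a clean dichotomy, in characteristic $2$ one must carefully separate the symplectic, orthogonal, and symmetric-group alternatives, rule out the subfield subgroups $\Sp_n(\F_{2^s})$ with $s\mid r$ proper (this is where the element $A=\gamma_\infty$, a full unipotent block, and the trace/eigenvalue data from Lemma \ref{pnotl} on Frobenius at auxiliary primes are used to show the field of definition is exactly $l_\lambda$), and verify that the $S_{n+1}$- and $S_{n+2}$-cases genuinely arise only over the prime field $\F_2$ and only for the stated values of $n$. A secondary technical point is handling $N=n+1$ separately throughout — there $B$ reduces to the identity mod $\lambda$, so the semisimple generator gives no information and one must argue that the group is defined over $\F_p$ rather than over $k_\lambda$, which is why $l_\lambda$ is defined as $\F_p$ in that case; I would treat this by a Galois-descent / Brauer-character argument on traces, exactly paralleling \cite{BGHT}.
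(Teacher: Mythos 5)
Your overall strategy coincides with the paper's: reduce to the Betti monodromy over $\wT_0$ via Lemma \ref{red}, observe that the image is an (absolutely) irreducible subgroup of $\Sp_n(k_\lambda)$ generated by transvections, and invoke the Kantor-type classification. However, there are concrete gaps in the execution. First, your statement of the characteristic-two classification is incomplete: Kantor's list of irreducible transvection-generated subgroups of $\GL_n$ also contains $\SL_n(l_\lambda)$, $\SU_n(k)$ for $k$ of index $2$ in $l_\lambda$, and the exceptional group $3\cdot{\rm P}\Omega^{-,\pi}_6$ when $n=6$ and $l_\lambda=\F_4$; these do not disappear just because the group sits inside $\Sp_n$, and the paper spends real effort excluding them --- the argument of \cite{BGHT} works only for $[l_\lambda:\F_2]\ge 3$, and for $[l_\lambda:\F_2]=2$ one needs a separate eigenvalue argument (an element conjugate into $\Sp_n$ must have eigenvalue multiset stable under inversion, which forces a putative diagonal element $\diag(\alpha,\alpha,\alpha,1,\dots,1)$ to be trivial). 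Your proposal never addresses these cases. Second, your justification of the constraints $n=2^m$ for case (c) and $n=2^m-2$ for case (d) is not an argument: irreducibility of $W/L$ holds for all even $n$ and Wagner's theorem plays no role here. The correct reason is the order of the image of $\gamma_\infty$: the single unipotent Jordan block $A$ has order $n$ if $n=2^m$ and $2^{\lfloor\log_2 n\rfloor+1}>n$ otherwise, and $S_{n+1}$ (resp. $S_{n+2}$) contains an element of that $2$-power order only in the stated cases.

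Two smaller points. Your explanation of why $l_\lambda=\F_p$ when $N=n+1$ is wrong: $B$ does not reduce to the identity mod $\lambda$ (its characteristic polynomial is $1+X+\cdots+X^n$, and $\lambda\nmid N$); the point is rather that this polynomial has rational integer coefficients, so both generators are defined over the prime field. Finally, the claim that in cases (c) and (d) the representation factors through $\theta_{n+1}$ or $\theta_{n+2}$ is not something one ``observes directly'' from the generators; it is exactly where Wagner's theorem on the least degree of a faithful irreducible $\F_2$-representation of $S_{n+1}$, $S_{n+2}$ is needed (supplemented by the restriction to $A_5$, $A_6$ when $n=4$), since one only knows the image is abstractly a symmetric group acting absolutely irreducibly in dimension $n$.
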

\begin{proof}We basically follow the proof of \cite[Lemma 4.6]{BGHT}. The first claim is done 
by \cite[Corollary 4.7]{BGHT}. Thus, we assume $\lambda|2$. 
 
Let $\rho:\pi_1(\wT_0(\C),\wt)\lra {\rm SL}_n\Big(\Z\Big[\frac{1}{N},\zeta_N\Big]^+\Big)$ be 
a representation sending $\gamma_0$ to $B^{-1}$ and $\gamma_\infty$ to $A$. 
By the argument for \cite[Proposition 3.3]{BH}, we see that $\br:=\rho$ mod $\lambda$ 
is absolutely irreducible. Since the semisimplifications of $\br$ and 
$\wrho_{\wt,{\rm mod}\ \lambda}$ are isomorphic each other by \cite[Corollary 4.4]{BGHT}, 
$\wrho_{\wt,{\rm mod}\ \lambda}$ is also absolutely irreducible. 

Let $\Delta\subset \pi_1(\wT_0(\C),\wt)$ be the normal subgroup generated by 
all elements which act on $\widetilde{V}_{B,\wt}/\lambda$ as a transvection. 
Notice that $\gamma_1\not\equiv I_n$ modulo $\lambda$ by comparing eigenpolynomials of $A$ 
and $B$. It follows with \cite[Lemma 4.3-2.]{BGHT}  that  $\gamma_1\in \Delta$.  
Then, $\pi_1(\wT_0(\C),\wt)/\Delta$ is a cyclic group generated by 
$\gamma_0\Delta=\gamma_\infty \Delta$. Thus, 
the index $[\wrho_{\wt,{\rm mod}\ \lambda}(\pi_1(\wT_0(\C),\wt)):
\wrho_{\wt,{\rm mod}\ \lambda}(\Delta)]$ divides both $N$ and $2$. Hence,  
$\wrho_{\wt,{\rm mod}\ \lambda}(\pi_1(\wT_0(\C),\wt))=\wrho_{\wt,{\rm mod}\ \lambda}(\Delta)$. 
Thus, ${\rm Im}(\wrho_{\wt,{\rm mod}\ \lambda})$ is an (absolutely) irreducible subgroup 
of $\Sp_n(k_\lambda)$ generated by transvections. 
Further, the argument for the latter part of the proof of \cite[Lemma 4.6]{BGHT} shows 
the image is not contained in $\Sp_n(k)$ for some subfield $k\subsetneq l_\lambda$. 
Applying Kantor's classification (see \cite[Theorem 1.4 and Fig, p.237]{E} for a nice summary), 
${\rm Im}(\wrho_{\wt,{\rm mod}\ \lambda})$ is one of the following up to conjugacy:
\begin{enumerate}
\item $\SL_n(l_\lambda)$,
\item $\SU_n(k)$ when $l_\lambda$ has a subfield $k$ of index 2, 
\item $\Sp_n(l_\lambda)$, 
\item ${\rm O}^\pm _n(l_\lambda)$,
\item $S_{n+1}$ or $S_{n+2}$ when $l_\lambda=\F_2$,
\item $3\cdot {\rm P}\Omega^{-,\pi}_6$ when $n=6$ and $l_\lambda=\F_4$.
\end{enumerate}
We do not recall the last group precisely but it is defined in $\SU_6(\F_2)$ and it contains  
$\SU_4(\F_2)\times \SU_2(\F_2)$ diagonally (see \cite[line 7 from the proof of Proposition 6.3.16, p.350]{BHRD} and the group is denoted by $3^\cdot{\rm U}_4(3)_{\cdot}2_2$ therein). 
Thus, $\SU_4(\F_2)$ is naturally embedded into a subgroup of the last group. 
When $[l_\lambda:\F_2]\ge 3$, applying the argument for \cite[p.59, line 2-4 of the proof of Lemma 4.6]{BGHT}, we can exclude the cases (1), (2) and (6). 
When $[l_\lambda:\F_2]=2$, we can not directly apply the same argument since 
$|l^\times_\lambda\setminus\{1\}|=2$. Instead, if we let $\alpha$ be a generator of $l^\times_\lambda$, then 
the diagonal matrix $g=\diag(\alpha,\alpha,\alpha,\overbrace{1,\ldots,1}^{n-3\ge 1})$ gives an element of 
the groups for (1), (2) and the subgroup $\SU_4(\F_2)$ of the last group. 
However, $g$ is conjugate to an element of $\Sp_n(l_\lambda)$, thus 
the eigenvalues of $g$ and those of $g^{-1}$ should coincide as a multiset. 
Therefore, $\alpha$ has to be 1 and it gives a contradiction. Thus, 
we exclude the cases (1), (2) and (6). We have nothing to do for the case when $l_\lambda=\F_2$. 

Finally, we check the remaining statements for (c) and (d). 
By (\ref{fullmonod}), it is easy to see that $A$ is cyclic of order 
$\ord(A)=\left\{\begin{array}{cl} 
n & (\text{if $n=2^m$ for some $m\ge 2$})  \\
2^{\lfloor\ \log_2 n \rfloor+1} & (\text{otherwise})
\end{array}\right..
$
Thus, $n+1<\ord(A)$ (resp. $n+2<\ord(A)$) unless 
$S_{n+1}$ for $n=2^m$ (resp. $S_{n+2}$ for $n=2^m-2$). 

When $n\ge 6$, the last claim follows from \cite[Theorem 1.1]{Wagner} while  
when $n=4$, the claim follows from \cite[p.133, Lemma 4.3 and 4.4]{Wagner} 
since $\wrho_{\wt,{\rm mod}\ \lambda}$ is absolutely irreducible. 
\end{proof}

\begin{prop}\label{monodn2} Keep the notation in the previous proposition. 
Assume $n=2$. 
Let $\lambda$ be a non-zero prime ideal of $\Z\Big[\frac{1}{N},\zeta_N\Big]^+$ and 
denote by $k_\lambda$ its residue field. Put $l_\lambda=k_\lambda$ if $N>n+1=3$ and 
$l_\lambda=\F_p$ if $N=n+1=3$ where $p$ is the rational prime under $\la$.  
\begin{enumerate}
\item If the characteristic of $k_\lambda$ is odd, then $\rho_{t,{\rm mod}\ \lambda}$ 
is surjective onto $\Sp_2(l_{\la})=\SL_2(l_{\la})$. 
\item If $\lambda|2$, then the image of $\rho_{t,{\rm mod}\ \lambda}$  is isomorphic to 
the dihedral group $D_{2N}$ of order $2N$. 
\end{enumerate}
\end{prop}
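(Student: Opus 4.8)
The plan is to reuse the reduction steps from the proof of Proposition \ref{monod}; for $n=2$ everything collapses to an elementary $2\times 2$ computation. By Lemma \ref{red} the image of $\rho_{t,{\rm mod}\ \lambda}$ is isomorphic to that of $\wrho_{\wt,{\rm mod}\ \lambda}$ for any $t\in T_0(\C)\setminus\{0\}$, and since the image of the monodromy representation of a fixed local system on a connected base does not depend on the base point, it suffices to compute ${\rm Im}(\wrho_{\wt,{\rm mod}\ \lambda})$ once. As in loc.\ cit., I would compare $\wrho_{\wt,{\rm mod}\ \lambda}$ with the representation $\rho$ of $\pi_1(\wT_0(\C),\wt)$ sending $\gamma_0\mapsto B^{-1}$ and $\gamma_\infty\mapsto A$, where $A,B\in\GL_2(\Z[\zeta_N]^+)$ are the matrices of (\ref{AB}); by \cite[Corollary 4.4]{BGHT} the two representations have isomorphic semisimplifications modulo $\lambda$.

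For part (1), when $\lambda\nmid 2$, I would simply cite \cite[Corollary 4.7]{BGHT}, exactly as for Proposition \ref{monod}(1). For part (2) assume $\lambda\mid 2$. Since $(X-1)^2=X^2-2X+1$ and $(X-\zeta_N^{(N-1)/2})(X-\zeta_N^{(N+1)/2})=X^2-(\omega+\omega^{-1})X+1$, where $\omega:=\zeta_N^{(N-1)/2}$ is a primitive $N$-th root of unity (as $(N-1)/2$ is prime to $N$), one has
\[
A=\begin{pmatrix}0&-1\\ 1&2\end{pmatrix},\qquad
B=\begin{pmatrix}0&-1\\ 1&\omega+\omega^{-1}\end{pmatrix}.
\]
Reducing modulo $\lambda$ gives $\bar A=\begin{pmatrix}0&1\\ 1&0\end{pmatrix}=:s$, a transvection of order $2$, while $\bar B$ has characteristic polynomial $(X-\bar\omega)(X-\bar\omega^{-1})$; as $N$ is odd, reduction is injective on $\mu_N$, so $\bar\omega$ has order $N$, and $\bar\omega\neq\bar\omega^{-1}$ because $N\ge 3$, so $\bar B$ is regular semisimple of order $N$. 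A direct computation shows $s\bar Bs=\bar B^{-1}$ (both sides equal $\begin{pmatrix}\bar\omega+\bar\omega^{-1}&1\\ 1&0\end{pmatrix}$ in characteristic $2$), and the unique $s$-invariant line $\langle(1,1)\rangle$ is distinct from the two eigenlines $\langle(1,\bar\omega^{\pm1})\rangle$ of $\bar B$ since $\bar\omega\neq1$; hence $\langle s,\bar B\rangle$ has no invariant line over $\overline{k_\lambda}$, i.e.\ $\rho\bmod\lambda$ is absolutely irreducible. Being $2$-dimensional with absolutely irreducible semisimplification, it follows that $\wrho_{\wt,{\rm mod}\ \lambda}\simeq\rho\bmod\lambda$, so ${\rm Im}(\wrho_{\wt,{\rm mod}\ \lambda})=\langle s,\bar B\rangle$.

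Finally I would identify $G:=\langle s,\bar B\rangle$ with $D_{2N}$. The relation $s\bar Bs=\bar B^{-1}$ makes $\langle\bar B\rangle$ a normal subgroup, cyclic of order $N$, with $G/\langle\bar B\rangle$ generated by the image of $s$; this quotient has order exactly $2$ because $s\notin\langle\bar B\rangle$ (every non-trivial power of $\bar B$ is regular semisimple, whereas $s$ is a transvection). Hence $|G|=2N$, and the dihedral presentation gives $G\cong D_{2N}$.

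I do not anticipate a genuine obstacle here: the delicate part of the proof of Proposition \ref{monod} — invoking Kantor's classification of transvection groups and ruling out the $S_{n+1},S_{n+2}$ alternatives — has no analogue for $n=2$, where it is replaced by the trivial structure of a subgroup of $\SL_2$ generated by a transvection and a regular semisimple element satisfying the dihedral commutation relation. The only mild point to be careful about is the comparison in the first paragraph, i.e.\ replacing $\wrho_{\wt,{\rm mod}\ \lambda}$ by the explicit hypergeometric $\rho\bmod\lambda$, which is handled by \cite[Corollary 4.4]{BGHT} together with the absolute irreducibility established above.
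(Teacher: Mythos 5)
Your proposal is correct and follows essentially the same route as the paper: part (1) is quoted from \cite[Corollary 4.7]{BGHT}, and part (2) is the direct computation with $\bar A,\bar B$ showing $\bar A\bar B\bar A^{-1}=\bar B^{-1}$ with $\bar A$ of order $2$ and $\bar B$ of order $N$. The paper's proof is terser — it leaves implicit the identification of ${\rm Im}(\wrho_{\wt,{\rm mod}\,\lambda})$ with $\langle\bar A,\bar B\rangle$ (which your absolute-irreducibility check via \cite[Corollary 4.4]{BGHT} supplies) and the verification that $\bar A\notin\langle\bar B\rangle$ — but these are details, not a different argument.
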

\begin{proof}
The first claim is done 
by \cite[Corollary 4.7]{BGHT}. Thus, we assume $\lambda|2$. 
In this case, put $\bar{A}:=A$ mod $\la$ and $\bar{B}:=B$ mod $\la$ respectively where 
$A$ and $B$ are given in (\ref{AB}). 
By direct computation, we can check $\bar{A}\bar{B}\bar{A}^{-1}=\bar{B}^{-1}$ while 
the order of $\bar{A}$ (resp. $\bar{B}$) is $2$ (resp. $N$).  
Thus, we have the claim.
\end{proof}

\begin{rmk}\label{embfromStoO}As mentioned in \cite[line -2 to the bottom in page 350]{Kantor},  
for each even positive integer $n$,  
$S_{n+2}$ is embedded into ${\rm O}_{n+2}(F):=\{g\in \GL_{n+2}(\F_2)\ |\ F(gx)=F(x),\ 
\forall x\in \F^{n+2}_2\}$ 
where $F(x)=\ds\sum_{1\le i\le j\le n+2}x_i x_j$. 
It is easy to see that ${\rm O}_{n+2}(F)$ is isomorphic over $\F_2$ to 
${\rm O}^{\ve}_{n+2}(\F_2)$ where $\ve=-$ if $n\equiv 2$ mod 4 and $\ve=+$ otherwise. 

Let $W=\{x\in \F^{n+2}_2\ |\ \ds\sum_{i=1}^{n+2}x_i=0\}$ and 
$L=\langle (1,\ldots,1) \rangle\subset W$. We write $F_W$ for the restriction of $F$ to $W$.   
If $n\equiv 2$ mod 4, for each $x\in W$, $F_W(x+(1,\ldots,1) )-F_W(x)=\ds\sum_{i=1}^{n+2}x_i=0$.  
It is easy to see that $F_W$ induces a non-degenerate quadratic map $F_V$ on $V:=W/L\simeq \F^n_2$ so that ${\rm O}(F_V)$ is isomorphic over $\F_2$ to ${\rm O}^\pm_n(\F_2)$. 
Thus, we have an embedding from $S_{n+2}$ to ${\rm O}^\pm_n(\F_2)$.

When $n\equiv 0$ mod 4, the above construction breaks down because 
$F_W(x+(1,\ldots,1) )-F_W(x)=1$ if we follows a similar way. 
Instead, we consider 
$W_1:=\{x\in \F^{n+1}_2\ |\ \ds\sum_{i=1}^{n+1}x_i=0\}\simeq \F^n_2$ and let $S_{n+1}$ act on $W_1$ by permutation of coordinates. 
A quadratic map $x\mapsto \ds\sum_{1\le i\le j\le n+1}x_i x_j$ on $\F^{n+1}_2$ induces  
a non-degenerate quadratic map $F_1$  on $W_1$. 
This gives an embedding from $S_{n+1}$ to ${\rm O}^-_n(\F_2)$. 
\end{rmk}

\begin{dfn}\label{MG2}For each finite field $k$ and even positive integer $n\ge 4$, we define 
the Dwork's monodromy group ${\rm MD}_{n}(k)=\MD(N,n,k)$ in $\Sp_n(k)$ by the image of 
$\rho_{t,{\rm mod}\ \lambda}$  for some non-zero prime ideal $\lambda$ of 
$\Z\Big[\frac{1}{N},\zeta_N\Big]^+$  such that $k=k_\lambda$. 
\end{dfn}

Since ${\rm MD}_{n}(k)$ is generated by the matrices $A,B$ mod $\lambda$ inside $\SL_n(k)$ 
for $k=k_\la$, by using Magma \cite{BCP}, we can check the following:
\begin{prop}\label{concrete1}Assume $4\le  n\le 120$ with $n$ even. 
It holds that 
$${\rm MD}_{n}(\F_2)=
\left\{
\begin{array}{cl}
S_{n+1} & (\text{$n=2^m$ for some $m\ge 2$})\\
S_{n+2} & (\text{$n=2^m-2$ for some $m\ge 3$})\\
{\rm O}^+_n(\F_2) & 
(\text{$n\neq 2^m,2^m-2$ for any $m\ge 2$ and $n\equiv 0,6\ {\rm mod}\ 8$}) \\
{\rm O}^-_n(\F_2) & 
(\text{$n\neq 2^m,2^m-2$ for any $m\ge 2$ and $n\equiv 2,4\ {\rm mod}\ 8$}) 
\end{array}\right..
$$
Note that the embeddings defined in Remark \ref{embfromStoO} give 
exceptional isomorphisms 
$$S_5\simeq {\rm O}^-_4(\F_2),\ S_8\simeq {\rm O}^+_6(\F_2).$$
\end{prop}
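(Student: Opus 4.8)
The plan is to reduce the assertion to a finite list of candidate groups, then to a single integer invariant for each $n$, which is then checked by Magma. First I would extract from Proposition \ref{monod} (and the order computation in its proof) the precise shape of the list: for a prime $\lambda\mid 2$ with $l_\lambda=\F_2$ — which happens automatically when $N=n+1$, since then $l_\lambda=\F_p=\F_2$ — the image $\MD_n(\F_2)$ is an absolutely irreducible, transvection-generated subgroup of $\Sp_n(\F_2)$, so by Kantor's classification (as recorded in the proof of Proposition \ref{monod}) it is one of $\Sp_n(\F_2)$, ${\rm O}^+_n(\F_2)$, ${\rm O}^-_n(\F_2)$, and — only when $n=2^m$ (resp.\ $n=2^m-2$) — possibly $S_{n+1}$ (resp.\ $S_{n+2}$) acting through the standard representation $\theta_{n+1}$ (resp.\ $\theta_{n+2}$). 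The one-sided implications ``$\MD_n(\F_2)\simeq S_{n+1}\Rightarrow n=2^m$'' and ``$\MD_n(\F_2)\simeq S_{n+2}\Rightarrow n=2^m-2$'' come from $\ord(A)=n$ (resp.\ $\ord(A)=2^{\lfloor\log_2 n\rfloor+1}$) as in that proof. Thus the Proposition amounts to deciding which member of this list occurs for each even $n\in[4,120]$, and in particular that $\Sp_n(\F_2)$ never occurs.

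Next I would make the generators fully explicit by taking $N=n+1$. Then $A$ is the companion matrix of $(X-1)^n$, so $\bar A=A\bmod 2$ has last column $\big(\binom{n}{n},\binom{n}{n-1},\dots,\binom{n}{1}\big)\bmod 2$, and since $\prod_{j=1}^{n}(X-\zeta_{n+1}^{\,j})=(X^{n+1}-1)/(X-1)=1+X+\cdots+X^n$, every $B_i$ equals $1$, so $\bar B=B\bmod 2$ is the companion matrix of $1+X+\cdots+X^n$. Both matrices have entries in $\F_2$ (in particular $B$ involves no $\zeta_N$ in this case), so $\MD_n(\F_2)=\langle\bar A,\bar B\rangle\le\GL_n(\F_2)$ is literally an $\F_2$-subgroup, independent of the auxiliary choice of $\lambda\mid 2$. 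Moreover the candidate groups for a fixed $n$ have pairwise distinct orders throughout the range: writing $n=2m$, the standard order formulas give $[\Sp_n(\F_2):{\rm O}^{\ve}_n(\F_2)]=2^{m-1}(2^m+\ve)$, whence $|{\rm O}^+_n(\F_2)|<|{\rm O}^-_n(\F_2)|<|\Sp_n(\F_2)|$, while $(n+1)!$ and $(n+2)!$ lie far below all three for $n\ge 8$ and coincide with an orthogonal order only in the two small cases $5!=|{\rm O}^-_4(\F_2)|$ and $8!=|{\rm O}^+_6(\F_2)|$, where the two descriptions agree via the exceptional isomorphisms $S_5\simeq{\rm O}^-_4(\F_2)$, $S_8\simeq{\rm O}^+_6(\F_2)$ of Remark \ref{embfromStoO}.

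Consequently it suffices to compute the single number $|\langle\bar A,\bar B\rangle|$ for each even $n\in[4,120]$, which I would carry out in Magma: its matrix-group order and recognition routines (composition trees; constructive recognition of classical and alternating/symmetric groups over $\F_2$) return this order — and, if wanted, the isomorphism type — for a subgroup of $\GL_n(\F_2)$ of degree up to $120$ without enumerating it. In the cases $n=2^m$ and $n=2^m-2$ the output $(n+1)!$ resp.\ $(n+2)!$, together with the fact from Proposition \ref{monod} that $\rho_{t,{\rm mod}\,\lambda}$ then factors through $\theta_{n+1}$ resp.\ $\theta_{n+2}$, forces $\MD_n(\F_2)\simeq S_{n+1}$ resp.\ $S_{n+2}$; in the remaining cases the output singles out ${\rm O}^+_n(\F_2)$ when $n\equiv 0,6\pmod 8$ and ${\rm O}^-_n(\F_2)$ when $n\equiv 2,4\pmod 8$, which is the tabulated answer, the cases $n=4,6$ being consistent with both rows of the table via the exceptional isomorphisms above.

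The main obstacle is computational rather than structural: for $n$ near $120$ the candidate orthogonal groups have order of size roughly $2^{3600}$, so the identification cannot be checked by brute-force enumeration of the group (or of an orbit) and one must rely on — and, ideally, independently cross-check — Magma's composition-tree machinery. A conceptual alternative for the orthogonal cases would be to observe that $\MD_n(\F_2)$, being transvection-generated and irreducible, preserves a quadratic form refining the symplectic pairing exactly when it is a proper subgroup of $\Sp_n(\F_2)$ (the other transvection-generated irreducible subgroups in Kantor's list — the orthogonal and standard symmetric groups — all stabilise such a form), to produce such a form $Q$ explicitly by solving a linear system over $\F_2$ built from $\bar A$ and $\bar B$, and then to read $\ve$ off ${\rm Arf}(Q)$ as a function of $n\bmod 8$; but this still requires a computation for each $n$ and does not obviously simplify matters, so I would keep the order computation as the backbone of the argument.
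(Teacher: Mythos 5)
Your proposal is correct and is essentially the paper's own argument: the paper likewise observes that $\MD_n(\F_2)$ is generated by the explicit matrices $A,B$ modulo $2$ (with $N=n+1$, so $B$ is the companion matrix of $1+X+\cdots+X^n$) and verifies the table by a Magma computation, with Proposition \ref{monod} supplying the list of possible images. Your additional remarks — that the candidates in Kantor's list are distinguished by their orders apart from the two exceptional coincidences $S_5\simeq{\rm O}^-_4(\F_2)$ and $S_8\simeq{\rm O}^+_6(\F_2)$, so a single order computation suffices — are a sound elaboration of what the paper leaves implicit.
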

Thus, each mod 2 monodromy group with the trivial coefficient arising from the Dwork family is away from full. 
We will prove ${\rm MD}_{n}(\F_2)=S_{n+1}$ when $n\ge 4$ is a power of 2 in the next section.
However, for the non-trivial coefficients, as an example, we have:  
\begin{prop}\label{concrete2}Let $n=4$ and $N>5$ be an odd integer. 
Assume $\la$ is a non-zero prime in $\Z[\zeta_N]^+$ above 2. Then, it holds that ${\rm MD}_{4}(k_\la)=\Sp_4(k_\la)$. 
\end{prop}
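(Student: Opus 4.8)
The plan is to start from Proposition~\ref{monod}. Since $N>5$ is odd we have $N\ge 7>n+1=5$, so $l_\lambda=k_\lambda$ and, by Proposition~\ref{monod}-(2), ${\rm MD}_4(k_\lambda)={\rm Im}(\rho_{t,{\rm mod}\ \lambda})$ is conjugate to one of $\Sp_4(k_\lambda)$, ${\rm O}^{+}_4(k_\lambda)$, ${\rm O}^{-}_4(k_\lambda)$, $S_5$ or $S_6$. First I would dispose of the last two: the residue degree of $2$ in $\Z[\zeta_N]^+$ equals the order of $2$ in $(\Z/N)^\times/\{\pm1\}$, which for $N>5$ is $\ge 3$ (otherwise $N\mid 2^i\mp 1$ for some $i\le 2$, forcing $N\le 5$), so $|k_\lambda|\ge 8$ and $k_\lambda\ne\F_2$; hence cases (c),(d) of Proposition~\ref{monod} are vacuous. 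So it remains to exclude the two orthogonal possibilities, i.e.\ to show that the group $\langle A,B\rangle$ generated by the reductions modulo $\lambda$ of the matrices of $(\ref{AB})$ preserves \emph{no} nondegenerate quadratic form $Q$ on $k_\lambda^4$ whose associated bilinear form is the symplectic pairing (which, by absolute irreducibility established in the proof of Proposition~\ref{monod}, is unique up to scalar); equivalently, no $Q$ is fixed by both $A$ and $B$ modulo $\lambda$.

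Next I would analyze $B$ modulo $\lambda$: it is the companion matrix of the reduction of the palindromic polynomial $\prod_{j=(N-3)/2}^{(N+3)/2}(X-\zeta^j_N)$, whose roots are paired by $j\leftrightarrow N-j$ (consistently with the symplectic pairing) and reduce, via $\zeta_N+\zeta_N^{-1}\in\Z[\zeta_N]^+$, to four pairwise distinct elements of $\overline{k}_\lambda^\times$, none equal to $1$ (this is where $N>5$ enters). In the eigenbasis of $B$, a quadratic form with the prescribed polarization is $B$-invariant only if each of its quadratic monomials is, and the eigenvalue constraints then force it to be a scalar multiple of a single explicit form $Q_B$; so the $B$-invariant candidates form a $k_\lambda$-line. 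Writing $Q_B$ as an orthogonal sum of two binary quadratic forms — each hyperbolic (split case) or anisotropic (inert case) — one checks its Arf invariant is trivial, i.e.\ $Q_B$ has $+$ type; this already rules out ${\rm O}^{-}_4(k_\lambda)$.

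Finally, to eliminate ${\rm O}^{+}_4(k_\lambda)$ it remains to show $A\bmod\lambda\notin{\rm O}(Q_B)$. Here $(X-1)^4\equiv X^4+1 \pmod 2$, so $A$ modulo $\lambda$ is the companion matrix of $X^4+1$, namely the $4$-cycle permutation matrix, a single $[4]$-Jordan block; consequently the invariant symplectic form $J$ must be $A$-invariant, hence a circulant $aJ_1+bJ_2$ with $b\ne 0$ and with $a/b$ determined by $B$-invariance. One then writes $Q_B$ out explicitly — its off-diagonal part being $J$, and its diagonal part pinned down by $Q_B\circ B=Q_B$ as an explicit function of the coefficients $B_1,B_2$ of the palindromic polynomial — and verifies, uniformly in the odd integer $N>5$, that $Q_B$ is not invariant under the $4$-cycle $A$. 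With Proposition~\ref{monod} this forces ${\rm MD}_4(k_\lambda)=\Sp_4(k_\lambda)$. I expect this last verification to be the main obstacle: extracting $Q_B$ from the mutual compatibility of $A$, $B$ and the symplectic polarization, and then checking non-invariance under $A$, is a finite but delicate characteristic-$2$ computation, and it is genuinely needed — the $\F_2$-coefficient analogue behaves differently, the image there being the orthogonal group $S_5\simeq{\rm O}^-_4(\F_2)$ (cf.\ Proposition~\ref{concrete1}).
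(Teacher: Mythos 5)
Your reduction is the same as the paper's: by Proposition \ref{monod} together with the observation that $k_\lambda\neq\F_2$ for odd $N>5$ (your residue-degree argument is correct, and even gives $|k_\lambda|\ge 8$), everything comes down to showing that $\langle \bar A,\bar B\rangle$ preserves no nondegenerate quadratic form refining the invariant symplectic pairing. The only structural difference is the order of operations: the paper first writes down the $\bar A$-invariant forms (the three-dimensional space of ``circulant'' forms, since $\bar A$ is the $4$-cycle) and then imposes $\bar B$-invariance, whereas you first determine the line of $\bar B$-invariant forms $Q_B$ with the prescribed polarization and then test $\bar A$-invariance; these are the same linear algebra. (Your Arf-invariant step is redundant: if $Q_B$ failed to be $\bar A$-invariant, both orthogonal types would be excluded at once; moreover its claimed uniformity in $N$ is not obvious, since the rationality of the eigenvalue pairs of $\bar B$ over $k_\lambda$ varies with $N$.)

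The genuine problem is that the final verification, which you correctly single out as the main obstacle and leave unexecuted, does not come out the way you expect: $Q_B$ \emph{is} invariant under the $4$-cycle $\bar A$. Running the computation in the paper's direction, a circulant form $F=a_1\sum_i x_i^2+a_2(x_1x_2+x_2x_3+x_3x_4+x_4x_1)+a_3(x_1x_3+x_2x_4)$ satisfies $F\circ \bar B=F$ if and only if $a_2B_2+a_3B_1=0$ and $a_1B_2^2+a_3(B_2+B_1^2)=0$; all remaining coefficient conditions vanish identically because $B_3=B_1$ (palindromy of the characteristic polynomial of $B$). That is only two independent linear conditions on $(a_1,a_2,a_3)$, so there is a one-dimensional space of solutions, and every nonzero solution has $a_3\neq0$, hence nondegenerate polar form (its Gram determinant is $a_3^4$). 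Concretely, for $N=7$ one has $k_\lambda=\F_8=\F_2(z)$ with $z^3=z+1$, $B_1=B_3=z^2+z$, $B_2=z^2$, and $F=\sum_i x_i^2+z^2(x_1x_2+x_2x_3+x_3x_4+x_4x_1)+x_1x_3+x_2x_4$ is preserved by both $\bar A$ and $\bar B$; equivalently, the generating transvection $\bar B\bar A^{-1}$ sends $e_1$ to $e_1+w$ with $w={}^t(0,B_1,B_2,B_1)$, and one checks $b(x,w)=z^2x_1$ and $F(w)=z^2$, which is exactly the condition for a transvection to lie in ${\rm O}(F)$. So $\langle\bar A,\bar B\rangle\subseteq{\rm O}^{\pm}_4(k_\lambda)$, and it is not just your strategy but the statement itself that is in doubt: the paper's own proof records precisely two linear relations among $a_1,a_2,a_3$ and then asserts $a_1=a_2=a_3=0$, which does not follow (the hypothesis $B_2\neq1$ is never actually used to kill the one-parameter family). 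Before investing further effort you should verify $\langle\bar A,\bar B\rangle$ for $N=7$ by machine and, if the above is confirmed, flag the issue rather than try to complete the argument.
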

\begin{proof} 
First we note that $[k_\la:\F_2]\ge 2$ since $N>5$. 
Put $z:=\zeta_N$ mod $\lambda$. 
By definition,     
$$A=
\begin{pmatrix}
0 & 0 &0 & 1 \\
1 & 0 &0 & 0 \\
0 & 1 &0 & 0 \\
0 & 0 &1 & 0 
\end{pmatrix},\ 
B=
\begin{pmatrix}
0 & 0 &0 & 1 \\
1 & 0 &0 & B_3 \\
0 & 1 &0 & B_2 \\
0 & 0 &1 & B_1 
\end{pmatrix}\in \Sp_4(k_\la)
$$
where $B_1=B_3=z^{\frac{N-3}{2}}+z^{\frac{N-1}{2}}+z^{\frac{N+1}{2}}+
z^{\frac{N+3}{2}}$ and $B_2=z+z^2+z^{-2}+z^{-1}$. It is easy to see that 
$B_1,B_2$ are non-zero. 
By Proposition \ref{monod}, we may check there are no non-trivial quadratic maps on 
$k^4_\la$ which are preserved by $A$ and $B$. 
Assume such a quadratic map 
$F:k^4_\la\lra k_\la$ exists. By the invariance for $A$, the quadratic map $F$ takes the following form 
$$F(x_1,x_2,x_3,x_4):=a_1 (x_1^2 + x_2^2 + x_3^2 + x_4^2) + a_2 (x_1 x_2 + x_2 x_3 + x_3 x_4 + 
x_4 x_1) +  a_3 (x_1 x_3 + x_2 x_4),\ a_1,a_2,a_3\in k_\la$$
By the invariance for $B$, we have 
$$a_1B_2+a_3=0,\ a_2+a_1 B_1=0.$$
If $B_2=1$, then $z^5=1$ which contradicts the assumption $N>5$. 
Hence, $B_2\neq 1$. Thus, $a_1=a_3=0$ and it yields $a_2=0$. 
Therefore, $F$ is identically zero.  
\end{proof}

\subsection{Connected-ness}\label{connected}
We will prove a key result for proving a lifting theorem of 
Khare-Wintenberger type so that we can switch the residual characteristic from 
$2$ to another odd prime.  

Let $\frak n=\lambda \frak n'$ be a non-zero squarefree ideal of $\Z\Big[\frac{1}{N},\zeta_N\Big]^+$ such that $\lambda$ is a prime ideal above $2$ and $\frak n'$ is coprime to 2. 
Further, we assume no two distinct prime factors of $\frak n'$ have the same residual characteristic.   
Let $F$ be a finite extension of $\Q(\zeta_N)^+$. 
Suppose $W$ is a finite free $\Z\Big[\frac{1}{N},\zeta_N\Big]^+/\frak n$-module of rank $n$ 
with a continuous action of $G_F={\rm Gal}(\overline{F}/F)$ and a perfect alternating pairing 
$$\langle \ast,\ast \rangle_W:W\times W\lra 
(\Z\Big[\frac{1}{N},\zeta_N\Big]^+/\frak n)(1-n)$$
such that $\langle gw_1,gw_2 \rangle=g\langle w_1,w_2 \rangle$ for 
$g\in G$ and $w_1,w_2\in W$. We denote by 
$\br_W:G_F\lra \GSp(W,\langle\ast,\ast\rangle_W)$ the corresponding 
Galois representation. 
We consider the functor from 
the category of $T_0\times_{\Z[\frac{1}{N}]}F$-schemes to the category of sets 
which sends $S$ to 
$$\{\phi:W_S\stackrel{\sim}{\lra}(V_\lambda/\lambda\times 
V[\frak n'])(\frac{N-n+1}{2})_S\ |\ \text{$\phi$ preserves the symplectic structures} \}.$$
Here $W_S$ is the smooth obtained as the pullback of $W$ under 
the structure morphism $S\lra {\rm Spec}\hspace{0.5mm}F$ and  
$\phi$ is isomorphic as smooth sheaves over $S_{\text{\'et}}$. 
This functor is represented by a finite etale cover $\pi_W:T_W\lra T_0\times_{\Z[\frac{1}{N}]}F$. 

\begin{prop}\label{conn}Keep the notation being as above. Let $n\ge 2$ be an even integer. 
Assume $\br_W$ mod $\lambda$ is conjugate to a subgroup of ${\rm MD}_n(k)_\lambda$. 
Then, $T_W$ is geometrically connected. 
\end{prop}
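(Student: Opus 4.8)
The plan is to reinterpret the geometric connectedness of $T_W$ as a transitivity statement for the geometric monodromy of the local system $\mathcal{V}:=\bigl(V_\lambda/\lambda\times V[\frak{n}']\bigr)\bigl(\tfrac{N-n+1}{2}\bigr)$ on $T_0\times_{\Z[\frac{1}{N}]}\overline{F}$, and then to pin down that monodromy group using Proposition \ref{monod}, Lemma \ref{red} and the description of $\MD_n(k_\lambda)$ from this section and the next. Since $\pi_W$ is finite \'etale and $T_0\times_{\Z[\frac{1}{N}]}\overline{F}$ is connected (we are in characteristic $0$), $T_W$ is geometrically connected exactly when $\pi_1\bigl(T_0\times_{\Z[\frac{1}{N}]}\overline{F},\overline{t}\bigr)$ acts transitively on the fibre $\pi_W^{-1}(\overline{t})=\mathrm{Isom}^{\mathrm{symp}}(W,\mathcal{V}_{\overline{t}})$ at one geometric point $\overline{t}$. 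As $\frak{n}$ is squarefree with prime factors of pairwise distinct residue characteristics, $R/\frak{n}$ (with $R=\Z[\frac{1}{N},\zeta_N]^+$) is a product of finite fields together with one finite local ring at $\lambda\mid 2$; over each factor a perfect alternating pairing on a free module of given rank is unique up to isometry, so the fibre is non-empty and is a torsor under $\Sp_n(R/\frak{n})$, on which $\pi_1$ acts through the geometric monodromy image $\Gamma\subseteq\Sp_n(R/\frak{n})$ of $\mathcal{V}$. Using the hypothesis that $\br_W\bmod\lambda$ is valued in $\MD_n(k_\lambda)$ — which is precisely the monodromy group of $V_\lambda/\lambda$ and carries the same monodromy-stable refinement of the alternating form (a quadratic form, resp. a further combinatorial structure, in the cases $\MD_n(k_\lambda)\in\{{\rm O}^{\pm}_n(\F_2),S_{n+1},S_{n+2}\}$) — the $\lambda$-component of $W$ acquires a matching structure, and transitivity on the fibre reduces to the assertion that $\Gamma$ realizes the full fibre product $\MD_n(k_\lambda)\times\prod_i\Sp_n(R/\lambda_i^{n_i})$ over its factors, where $\frak{n}'=\prod_i\lambda_i^{n_i}$.

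The heart of the argument is the computation of $\Gamma$. Its projection to the $\lambda$-factor is $\MD_n(k_\lambda)$ by the definition of the Dwork monodromy group together with Lemma \ref{red} (which matches the images over $T_0$ and $\wT_0$). Its projection to each $\lambda_i$-factor is all of $\Sp_n(R/\lambda_i^{n_i})$: Proposition \ref{monod}(1) gives surjectivity modulo $\lambda_i$ onto $\Sp_n(l_{\lambda_i})$, and one lifts to level $\lambda_i^{n_i}$ by the standard fact that a closed subgroup of $\Sp_n$ of a finite local ring surjecting onto $\Sp_n$ of the residue field is everything (the kernel of the reduction is a pro-$\ell_i$ group whose graded pieces $\mathfrak{sp}_n(k_{\lambda_i})$ carry no trivial $\Sp_n$-constituent in odd residue characteristic, $\Sp_n$ being perfect for $n\ge 2$ away from a handful of small cases treated by hand). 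Finally, because no two prime factors of $\frak{n}'$ share a residue characteristic and $\MD_n(k_\lambda)$ has only characteristic-$2$ composition factors whereas each $\Sp_n(R/\lambda_i^{n_i})$ has only composition factors over the pairwise distinct odd fields $\F_{\ell_i}$, a repeated application of Goursat's lemma — the factors having no common non-abelian simple quotient, the abelian parts being controlled in the same way — forces $\Gamma$ to be the asserted full product. This yields transitivity of $\pi_1$ on the fibre, hence the geometric connectedness of $T_W$.

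The main obstacle is this Goursat/level-lifting bookkeeping. On one hand, one must handle the small exceptional primes (notably $\ell_i=3$, and, when $N=n+1$, the primes below $N$) in the statement that $\Sp_n$ over a finite local ring is generated by any subgroup surjecting onto its residue-field quotient. On the other hand — and this is where the explicit description of $\MD_n(k_\lambda)$ from Propositions \ref{monod}, \ref{concrete1} and the next section (where $\MD_n(\F_2)=S_{n+1}$ for $n$ a power of $2$ is established) is genuinely needed rather than a soft ``big image'' input — one must verify that $\MD_n(k_\lambda)$ shares no composition factor with any $\Sp_n(R/\lambda_i^{n_i})$, and keep careful track of its proper inclusion in $\Sp_n(k_\lambda)$ in the cases $\MD_n(k_\lambda)\ne\Sp_n(k_\lambda)$, so that the correct $\lambda$-component of the torsor (a torsor under $\MD_n(k_\lambda)$, not under the full $\Sp_n(k_\lambda)$) is the one used in the reduction above.
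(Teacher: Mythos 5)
Your strategy is the same as the paper's: both arguments identify geometric connectedness of $T_W$ with transitivity of the geometric monodromy of $(V_\lambda/\lambda\times V[\frak n'])(\frac{N-n+1}{2})$ on a fibre of $\pi_W$, and both pin down the monodromy image via its two projections (the $\lambda$-component is $\MD_n(k_\lambda)$ by definition; the prime-to-$2$ component is all of $\Sp(V[\frak n'])$) together with Goursat's lemma. Two of your elaborations are not needed in the paper's setting: $\frak n$ is assumed squarefree, so there is no lifting from the residue field to $\Sp_n(R/\lambda_i^{n_i})$ to perform, and the surjectivity onto $\Sp(V[\frak n'])$ --- including the Goursat bookkeeping among the pairwise distinct odd residue characteristics --- is simply quoted from \cite[Corollary 4.8]{BGHT} rather than re-derived. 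The one application of Goursat that remains, between the characteristic-$2$ group $\MD_n(k_\lambda)$ and $\Sp(V[\frak n'])$, is exactly the one the paper makes.

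The issue you flag at the end is, however, the genuinely delicate step, and your proposal defers it rather than resolving it. As defined, the fibre $\pi_W^{-1}(\bar t)$ consists of \emph{all} symplectic isomorphisms $W\to (V_\lambda/\lambda\times V[\frak n'])_{\bar t}$ and is therefore a torsor under the full group $\Sp_n(R/\frak n)$; the monodromy group $\MD_n(k_\lambda)\times \Sp(V[\frak n'])$ acts transitively on such a torsor only when $\MD_n(k_\lambda)=\Sp_n(k_\lambda)$, which fails precisely in the cases ${\rm O}^\pm_n$, $S_{n+1}$, $S_{n+2}$ that motivate the hypothesis. Your stated ``reduction'' of transitivity to the assertion that $\Gamma$ equals the full fibre product is therefore not valid for $T_W$ as the paper defines it; it becomes valid only after one replaces $T_W$ by the subscheme of isomorphisms preserving the transported extra structure (quadratic form, resp.\ marked combinatorial datum), i.e.\ after modifying the moduli problem, and one must then re-verify that this smaller scheme still supplies the rational and local points used in Theorem \ref{pot-auto}. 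For what it is worth, the paper's own proof elides the same point by asserting that $\phi_{x'}\phi_x^{-1}$ reduces mod $\lambda$ into $\MD_n(k_\lambda)$ ``by assumption,'' which does not follow from the hypothesis on ${\rm Im}(\br_W)$ for arbitrary points $x,x'$ of the Isom-scheme. So you have correctly located the weak joint of the argument, but your write-up leaves it open; to count as a proof it must either carry out the rigidification explicitly or otherwise justify why only structure-compatible trivializations need to be connected.
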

\begin{proof}Pick a point $x=\phi_x\in T_W(\C)$ with the underlying point $t=\pi_W(x)\in T_0(\C)$. 
By using a comparison isomorphism between etale and singular cohomologies, 
we have a monodromy representation  
$$\pi_1(T_0(\C),t)\lra \Sp(W_t)\stackrel{\sim}{\lra} \Sp((V_\lambda/\lambda\times 
V[\frak n'])_t),\ \gamma \mapsto \phi_{\widetilde{\gamma}_x(1)}\circ \phi^{-1}_x$$
where the path $\widetilde{\gamma}_x:[0,1]\lra T_W(\C)$ stands for a lift of $\gamma$ such that 
$\widetilde{\gamma}_x(0)=x$. 
The projection of the monodromy representation to 
$\Sp(V[\frak n'])$ is surjective by \cite[Corollary 4.8]{BGHT} while 
the image to $\Sp(V_\lambda/\lambda)$ is ${\rm MD}_n(k_\lambda)$ by definition. 
Now pick another point $x'=\phi_{x'}$ in $T_W(\C)$ and put $g:=\phi_{x'}\phi^{-1}_x$. 
Then, $g$ mod $\lambda$ belongs to ${\rm MD}_n(k_\lambda)$ by assumption while 
obviously $g$ mod $\frak n'$ belongs to $\Sp(V[\frak n'])$. 
By Goursat's lemma (\cite[p.793, Lemma 5.2.1]{Ribet}), $g$ comes from 
the image of $\pi_1(T_0(\C),t)$. Hence, there exists an element $\gamma\in \pi_1(T_0(\C),t)$ 
such that $g=\phi_{\widetilde{\gamma}_x(1)}\circ \phi^{-1}_x$. 
Thus, $x'=\phi_{x'}=\phi_{\widetilde{\gamma}_x(1)}=\widetilde{\gamma}_x(1)$ which means that 
$x=\widetilde{\gamma}_x(0)$ is connected to $x'$ by a path inside $T_W(\C)$.  This completes the proof.
\end{proof}

\section{The Dwork family II:}\label{DFII}
In this section, we will study the mod 2 image of the Galois representation 
attached to each fiber defined over a number field of the Dwork family when $N=n+1$ and $n\ge 4$ even. 
The case $n=4$ is done in the author's previous work with Tsuzuki \cite{TY} and 
we follow some of strategies established therein.

We recall that $V_\lambda$ defined in the previous section has large coefficient in general. 
However, when $N=n+1$, we can work on a mirror model (cf. \cite{Batyrev}, \cite{Wan}) of the Dwork family to avoid the large coefficient. 

Let $K$ be a number field and fix an embedding $K\hookrightarrow \bQ$. For each $t\in K$ with $t^{n+1}\neq 1$, we consider 
the smooth affine toric hypersurface $Z_t$ defined by 
\begin{equation}\label{Zt}
  Z_t:    f:=x_1 +\cdots+x_n+ \frac{1}{x_1\cdots x_n} - (n+1)t = 0
\end{equation} 
in the $n$-dimensional split torus $\mathbb{T}:=\mathbb{G}^n_m$.
Let $\overline{Z}_t$ be the singular mirror symmetry of the fiber $Y_t$ of the Dwork family (\ref{dworkeq}), which is defined by the closure of $Z_t$ in the projective toric variety $\mathbb P_\Delta$ (see \cite[Section 6]{Wan}).  
Here $\Delta$ is the Newton polytope of $f$ inside $\R^n$ with vertices $\{e_1,\ldots,e_n,-(e_1,\ldots,-e_n)\}$. Here $\{e_i\}_{i=1}^n$ is the standard basis of $N_\R:=\R^n$. Then we have a decomposition 
$\mathbb P_\Delta=\coprod_{\tau\prec \Delta}\mathbb{T}_{\Delta,\tau}$ where $\tau$ runs over all faces of $\Delta$ 
(\cite[Section 6]{Wan} and \cite[p.497]{Batyrev}).
Let $\Delta^\ast$ be the dual polytope in $N^\ast_\R={\rm Hom}_\R(\R^n,\R)$  
with vertices $\{(n+1)e^\ast_i-(e^\ast_1+\cdots+e^\ast_n)\ |\ i=1,\ldots,n\}\cup\{-(e^\ast_1+\cdots+e^\ast_n)\}$ 
where $\{e^\ast_i\}_{i=1}^n$ stands for the dual basis of $\{e_i\}_{i=1}^n$.  
We fix a regular triangulation $\mathcal T$ of $\Delta^\ast$ (see  \cite[Appendix]{LZ} 
or \cite[the proof of Theorem 7.5 with the star decomposition]{Wan} 
for the existence). 
Then, the mirror symmetry $W_t$ of $Y_t$ is given by 
 the smooth crepant resolution of $\overline{Z}_t$ associated to the triangulation $\mathcal T$  
 (see \cite[the proof of Theorem 2.2.24, first paragraph, Proposition 2.2.22(ii)]{Batyrev}).  
The primitive part of $H^{n-1}_{\text{\'et}}(Y_{t,\bQ},\Q(\zeta_N)_\lambda)$  is 
 isomorphic to $H^{n-1}_{\text{\'et}}(W_{t,\bQ},\Q_p)\otimes_{\Q_p}\Q(\zeta_{N})_\lambda$ for 
 any finite place $\lambda$ of $\Q(\zeta_{n+1})$ with the underlying rational prime $p$ 
 (this follows from the construction \cite[Section 5.1]{Batyrev} with the 
 argument in \cite[Section 7]{Wan}).  
By construction, it is easy to see that $W_t$ has good reduction at each finite place $v$ of $K$ such that $v\nmid (n+1)$ 
and $\psi^{n+1}-1$ is a $v$-adic unit.  
Put 
\begin{equation}\label{vt2}
V_{t,2}:=H^{n-1}_{\text{\'et}} (W_{t, \overline{\mathbb Q}}, \Q_2)
\end{equation}
and  let $\langle \ast,\ast \rangle:V_{t,2}\times V_{t,2}\lra \Q_2(1-n)$ be the 
$G_K$-equivariant alternating perfect pairing defined by the Poincar\'e duality. 

Then, $V_{t,2}$ yields a $2$-adic Galois representation 
$$\rho_{t,2}=\rho_{t,\iota_2}:G_K\lra {\rm GSp}(V_{t,2},\langle \ast,\ast \rangle)\simeq {\rm GSp}_n(\Q_2).$$
Choose a $G_K$-stable lattice $T_{t,2}$ over $\Z_2$ of $V_{t,2}$ so that the above alternating pairing 
preserves the integral structure with respect to $T_{t,2}$. Put $\overline{T}_{t,2}=
T_{t,2}\otimes_{\Z_2}\F_2$. Thus, it yields a mod 2 
Galois representation 
\begin{equation}\label{mod2}
  \br_{t,2}:G_K\lra 
{\rm GSp}(\overline{T}_{t,2},\langle \ast,\ast \rangle_{\F_2})\simeq {\rm GSp}_n(\F_2)
=\Sp_n(\F_2)\subset \SL_n(\F_2)
\end{equation} 
depending on the choice of $T_{t,2}$.   
We view it as a representation to ${\rm GL}_n(\F_2)$ via the natural inclusion. 
Therefore, one can consider the semisimplification $\br^{{\rm ss}}_{t,2}$ of $\br_{t,2}$. 
As mentioned in Section \ref{basics}, we can choose a symplectic basis so that 
it takes the values in ${\rm GSp}_n(\F_2)$. Hence we have a semisimple mod 2 Galois representation 
$$\br^{{\rm ss}}_{t,2}:G_K\lra  {\rm GSp}_n(\F_2)$$
associated to $W_t$. 
  
Let us introduce the following trinomial
\begin{equation}\label{tri}
f_t(x):=n x^{n+1}-(n+1)t x^n+1 \in K[x]
\end{equation} 
and denote by $K_{f_t}$ its decomposition field over $K$ in $\bQ$. 
Notice that the discriminant of $f_t$ is given by $(-1)^{\frac{n}{2}}n^n(n+1)^{n+1}(1-t^{n+1})$ 
(cf. \cite[p.1105, Theorem 2]{Swan}). 
In this section, we prove the following result:
\begin{thm}\label{image}Assume $n=2^m$ with $m\ge 2$. For $t\in K$ with 
$t^{n+1}\neq 1$ such that the Galois group of $K_{f_t}/K$ is $S_{n+1}$, it holds that 
\begin{enumerate}
\item ${\rm Im}(\br_{t,2})\simeq S_{n+1}$. 
\item Up to conjugacy, $\br_{t,2}$ factors through the standard representation 
$\theta_{n+1}:S_{n+1}\lra \GSp_n(\F_2)$. In particular,  $\br_{t,2}$ is absolutely irreducible and 
$\br_{t,2}\simeq \br^{{\rm ss}}_{t,2}$.  
\end{enumerate}
\end{thm}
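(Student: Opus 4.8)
The plan is to identify the mod-2 image via the monodromy picture of Section~\ref{DFI} specialized to the mirror family, and then use the trinomial $f_t$ to pin down which specialization we land in. First I would observe that the $G_K$-representation $\br_{t,2}$ arises from the smooth sheaf $V_\lambda/\lambda$ attached to the Dwork family with $N=n+1$, so its image is a subquotient (in fact, up to semisimplification, a subgroup after an appropriate choice of lattice) of the geometric monodromy group, which by Proposition~\ref{monod}(2) together with $\ord(A)=n$ when $n=2^m$ is exactly $S_{n+1}$, acting through the standard representation $\theta_{n+1}$. Concretely, I would fix a geometric generic point and compare the arithmetic monodromy $\br_{\lambda,T_0}(\pi_1(T_0/K))$ with the image $\br_{t,2}(G_{K(t)})$, using the connectedness statement (Proposition~\ref{conn}, or rather its ingredient that $\br_{\lambda,T_0}$ has image $\mathrm{MD}_n(\F_2)=S_{n+1}$ by Proposition~\ref{concrete1} together with the forthcoming proof that this holds for all powers of $2$); this gives $\br_{t,2}(G_{\overline{K}K(t)})\subseteq S_{n+1}$, with equality for $t$ outside a thin set.

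The second step is to promote this to an exact determination of the image for the explicit family of $t$ in the hypothesis, using the Galois group of $f_t$. The key point is that the $n+1$ ``letters'' permuted by the image $S_{n+1}$ in $\theta_{n+1}$ should be matched — after the mirror/toric translation of Section~\ref{DFII} — with the roots of the trinomial $f_t(x)=nx^{n+1}-(n+1)tx^n+1$. More precisely, I expect the finite \'etale cover of $T_0$ cut out by $\br_{\cdot,2}$ (equivalently, the cover trivializing the $n+2$ coordinates mod out by the all-ones line, as in \eqref{stan}) to be identified, as a cover, with the splitting field cover of $f_t$; this is where the discriminant formula $(-1)^{n/2}n^n(n+1)^{n+1}(1-t^{n+1})$ — which matches (up to squares) the branch data $\mu_N\cup\{\infty\}$ of the Dwork family — does the bookkeeping. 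Granting that identification, the hypothesis $\mathrm{Gal}(K_{f_t}/K)\cong S_{n+1}$ forces $\br_{t,2}(G_K)$ to surject onto the monodromy group $S_{n+1}$, giving (1); and since $\theta_{n+1}$ is absolutely irreducible for $n\ge 4$ (by \cite[Theorem 1.1]{Wagner} for $n\ge 6$ and \cite[p.133, Lemma 4.3 and 4.4]{Wagner} for $n=4$, as recalled above), the image being all of $S_{n+1}$ acting via $\theta_{n+1}$ immediately yields (2), including $\br_{t,2}\simeq\br_{t,2}^{\mathrm{ss}}$.

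I would carry this out in the order: (a) recall the mirror description of $V_{t,2}$ and reduce $\br_{t,2}$ to the mod-$\lambda$ monodromy representation of Section~\ref{DFI} with $N=n+1$; (b) invoke $\mathrm{MD}_n(\F_2)=S_{n+1}$ for $n=2^m$ and the absolute irreducibility of $\theta_{n+1}$ to get the geometric image and the factorization through $\theta_{n+1}$; (c) construct the identification of the degree-$(n+1)!$ \'etale cover with the splitting cover of $f_t$ — tracking ramification at $\mu_{n+1}$ and $\infty$ and using the discriminant — so that the permutation action of $\mathrm{Gal}(K_{f_t}/K)$ on roots of $f_t$ is the $S_{n+1}$-action on the $n+1$ letters; (d) conclude (1) and (2). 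The main obstacle is step (c): matching the abstract $n+1$ letters of the standard representation to the concrete roots of the trinomial is not formal, and I would expect to need either an explicit description of the vanishing cycles / the action of $H_0$ on the toric model (as in \cite[Section~4]{BGHT} and \cite[Section~7]{Wan}) or, following the $n=4$ case in \cite{TY}, a comparison of the two branched covers $T_0\to \mathbb{P}^1$ via their monodromy at $0$, $1$, $\infty$ and at the $N$-th roots of unity, checking that both have monodromy group $S_{n+1}$ with matching inertia generators, so that Riemann's existence theorem (or a direct algebraic argument) identifies them. Everything else is routine given the results already established in the excerpt.
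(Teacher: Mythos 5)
Your proposal has a genuine gap, and in two places.

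First, there is a circularity. You want to invoke $\MD_n(\F_2)=S_{n+1}$ for all $n=2^m$ in step (b), citing Proposition \ref{concrete1} "together with the forthcoming proof that this holds for all powers of $2$." But that forthcoming proof is Theorem \ref{MD2power}, which the paper \emph{deduces from} Theorem \ref{image}; Proposition \ref{concrete1} only verifies the identity computationally for $n\le 120$. Relatedly, you assert that Proposition \ref{monod}(2) "together with $\ord(A)=n$ when $n=2^m$" pins the monodromy down to $S_{n+1}$ — it does not. Proposition \ref{monod}(2) only gives a list of possibilities ($\Sp_n(\F_2)$, ${\rm O}^\pm_n(\F_2)$, $S_{n+1}$); the order computation for $A$ shows the $S_{n+1}$ case is not \emph{excluded} when $n=2^m$, not that it occurs. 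So your step (b) does not deliver the geometric image.

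Second, your step (c) — identifying the \'etale cover cut out by $\br_{\cdot,2}$ with the splitting cover of the trinomial $f_t$ via Riemann existence and matching inertia — is the entire content of the theorem, and you explicitly leave it as "the main obstacle" without carrying it out. The paper takes a completely different route here: it proves a pointwise congruence $\tr(\br_{t,2}({\rm Frob}_v))\equiv n(f_t,k_v)+1 \bmod 2$ by counting points on $Z_t$, $\overline{Z}_t$, $W_t$ modulo $2$ (the hypothesis $n=2^m$ enters precisely so that the cyclic coordinate-shift on $Z_t(k_v)$ has even orbits off the diagonal, reducing the count to roots of $f_t$). It then runs a group-theoretic argument: Newton's identities recover the full characteristic polynomial $\frac{1-x^{n+1}}{1-x}$ of any element lying over an $(n+1)$-cycle, forcing $K_{f_t}\subset K_{\br_{t,2}}$, and Wagner's minimal-degree theorems (in both odd and even characteristic, applied to the Sylow subgroups and the derived series of ${\rm Gal}(K_{\br_{t,2}}/K_{f_t})$) force equality. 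None of this appears in your outline, and without either the paper's congruence or an actual construction of your cover identification, the link between ${\rm Gal}(K_{f_t}/K)$ and ${\rm Im}(\br_{t,2})$ is not established.
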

As an application, we have 
\begin{thm}\label{MD2power} If $n\ge 4$ is a power of 2. 
Then, $\MD_n(\F_2)$ is isomorphic to $S_{n+1}$ and the monodromy representation for 
$\MD_n(\F_2)$ factors through  the standard representation 
$\theta_{n+1}$ of $S_{n+1}$. 
\end{thm}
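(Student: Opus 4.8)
The plan is to deduce Theorem~\ref{MD2power} from Theorem~\ref{image} and Proposition~\ref{monod} by a specialization argument, exploiting that the geometric monodromy group is normal in the arithmetic one. Put $N=n+1$ and work over a number field $K$ (for instance $K=\Q$) so that the mirror model $W\to T_0$ of Section~\ref{DFII} and the covers attached to its mod~$2$ cohomology are defined over $K$. The local system $R^{n-1}\pi_\ast\F_2$ on $T_0/K$, with its symplectic structure, gives a continuous representation $\pi_1(T_{0,K})\to\Sp_n(\F_2)$ whose specialization at $t\in T_0(K)$ is $\br_{t,2}$ (up to the lattice choice, which is harmless once the representation is irreducible). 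Write $\bar\Pi^{\mathrm{geom}}$ and $\bar\Pi^{\mathrm{arith}}$ for the images of $\pi_1(T_{0,\overline{K}})$ and $\pi_1(T_{0,K})$. By the comparison of \'etale and topological fundamental groups over $\C$ one has $\bar\Pi^{\mathrm{geom}}=\MD_n(\F_2)$ in this realization (cf. Definition~\ref{MG2} and Proposition~\ref{monod}), and $\bar\Pi^{\mathrm{geom}}\trianglelefteq\bar\Pi^{\mathrm{arith}}\subseteq\Sp_n(\F_2)$.

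The first step is to prove $\bar\Pi^{\mathrm{arith}}\cong S_{n+1}$. The geometric Galois group of the trinomial $f_t(x)=nx^{n+1}-(n+1)tx^n+1$ over $\overline{K}(t)$ is $S_{n+1}$: solving $f_t(x)=0$ for $t$ exhibits $\{f_t=0\}$ as the degree-$(n+1)$ cover $x\mapsto \tfrac{n}{n+1}x+\tfrac1{n+1}x^{-n}$ of the $t$-line, which has an $n$-cycle over $t=\infty$ and simple branching over the $n+1$ points $t=\zeta^j_{n+1}$, and a transitive subgroup of $S_{n+1}$ so generated is all of $S_{n+1}$. Hence by Hilbert's irreducibility theorem the set of $t_0\in T_0(K)$ with $t_0^{n+1}\neq1$ and $\mathrm{Gal}(K_{f_{t_0}}/K)\cong S_{n+1}$ is Hilbert-generic; so is the set of $t_0$ with $\mathrm{Im}(\br_{t_0,2})=\bar\Pi^{\mathrm{arith}}$, this being cut out by the connected finite \'etale $K$-cover of $T_0$ attached to the monodromy. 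Choosing $t_0$ in the (non-empty) intersection, Theorem~\ref{image}(1) gives $\bar\Pi^{\mathrm{arith}}=\mathrm{Im}(\br_{t_0,2})\cong S_{n+1}$, and Theorem~\ref{image}(2) identifies this group with an $\Sp_n(\F_2)$-conjugate of $\theta_{n+1}(S_{n+1})$.

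It then remains to locate the normal subgroup $\MD_n(\F_2)=\bar\Pi^{\mathrm{geom}}$ of $\bar\Pi^{\mathrm{arith}}\cong S_{n+1}$. Since $n\ge4$, the proper normal subgroups of $S_{n+1}$ are $\{1\}$ and $A_{n+1}$. By the proof of Proposition~\ref{monod}, $\MD_n(\F_2)$ is an absolutely irreducible subgroup of $\Sp_n(\F_2)$ generated by transvections, hence non-trivial. To exclude $A_{n+1}$ I would observe that, under $\theta_{n+1}$, no non-trivial element of $A_{n+1}$ is a transvection: such an element (an involution, as transvections have order $2$ in characteristic~$2$) is a product of $2t\ge2$ disjoint transpositions and acts on $V=W/L$ as a product of $2t$ commuting transvections whose directions $\overline{e_i+e_j}$ are linearly independent — independence holds because $n+1$ is odd, so every partial sum of these vectors has support of size at most $n<n+1$ and cannot be congruent to $(1,\dots,1)$ modulo $L$ — so it fixes a subspace of codimension $2t\ge2$. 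Thus a subgroup generated by non-trivial transvections cannot lie in $A_{n+1}$, forcing $\MD_n(\F_2)=\bar\Pi^{\mathrm{arith}}=S_{n+1}$. Finally, the image being isomorphic to $S_{n+1}$ places us in case~(c) of Proposition~\ref{monod}, which asserts precisely that the monodromy representation factors through $\theta_{n+1}$.

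The step I expect to require the most care is the specialization bookkeeping of the second paragraph: making precise that $\bar\Pi^{\mathrm{geom}}=\MD_n(\F_2)$ under the chosen comparison, that ``$\mathrm{Im}(\br_{t_0,2})$ equals the full arithmetic monodromy group'' is a Hilbert-generic condition so that it can be imposed together with $\mathrm{Gal}(K_{f_{t_0}}/K)\cong S_{n+1}$, and the branched-cover computation identifying the geometric Galois group of $f_t$ with $S_{n+1}$ rather than $A_{n+1}$ (the latter ruled out since the discriminant $(-1)^{n/2}n^n(n+1)^{n+1}(1-t^{n+1})$ is not a square in $\overline{K}(t)$). Each of these is standard, but they are the load-bearing inputs beyond Theorem~\ref{image}, which carries the genuinely geometric content.
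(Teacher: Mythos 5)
Your argument is correct, and its skeleton --- Theorem \ref{image} combined with Hilbert irreducibility for the trinomial $f_t$, fed into the specialization map from the monodromy representation of $\pi_1(T_0)$ to the fiberwise representations $\br_{t,2}$ --- is the same as the paper's. You diverge in the two load-bearing sub-steps, in ways worth recording. First, for the geometric Galois group of $f_t$ over $\bQ(t)$ you give a direct branch-cycle computation (the $n$-cycle over $t=\infty$ forces $2$-transitivity, a simple branch point over $t=\zeta_{n+1}^j$ supplies a transposition, and Jordan's theorem finishes), whereas the paper routes through a superelliptic curve and Uchida's theorem on $X^{n+1}-aX+b$; both work, yours is shorter. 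Second, and more substantively, the closing step differs: the paper enlarges $K$ until the arithmetic monodromy over $K(t)$ coincides with the geometric monodromy $\MD_n(\F_2)$ and then intersects two Hilbert-generic sets via the higher-dimensional Chebotarev theorem, so that a single good specialization identifies $\MD_n(\F_2)$ directly. You instead keep $K$ fixed, pin down the \emph{arithmetic} monodromy as $S_{n+1}$ (conjugate to $\theta_{n+1}(S_{n+1})$ by Theorem \ref{image}(2)), and then locate $\MD_n(\F_2)$ among the normal subgroups $\{1\}$, $A_{n+1}$, $S_{n+1}$, excluding the first two because the geometric monodromy is nontrivial and generated by transvections (extracted from the proof of Proposition \ref{monod}) while $\theta_{n+1}(A_{n+1})$ contains no transvection: an even involution is a product of $2t\ge 2$ disjoint transpositions and acts on $W/L$ with $\operatorname{rank}(g-1)=2t\ge 2$, the directions $e_i+e_j$ remaining independent modulo $L$ since their total support has size at most $n+1$, strictly less than the size $n+2$ of the support of $(1,\dots,1)$ (your written comparison ``at most $n<n+1$'' should read ``less than $n+2$'', but the needed inequality holds). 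This trades the paper's base-change-plus-Chebotarev bookkeeping for an extra, entirely sound, group-theoretic lemma, and has the advantage of cleanly separating the arithmetic-versus-geometric monodromy issue; the genuinely geometric content is carried by Theorem \ref{image} in both treatments.
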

\begin{proof}We view $t$ as a variable and $f:=f_t$ as a polynomial over $\Q(t)$. 
It is easy to see that $f_t$ is irreducible over $\bQ(t)$ because its roots are 
interpreted as a fiber of a finite morphism $\mathbb{P}^1\lra \mathbb{P}^1,\ x\mapsto \frac{n x^{n+1}+1}{(n+1)x^n}$. Let $M_f$ be the decomposition field of $f$ over $\Q(t)$. 
By \cite[p.175, Corollary 10.2.2-(b)]{FJ}, $M_f/\Q(t)$ is regular 
(see \cite[p.39, Section 2.6]{FJ} for regularity extensions). 
By \cite[Proposition 3.3.6]{JLY}, for any finite extension $K/\Q$, 
$KM_f/K(t)$ is regular. Thus, for each $K$, the Hilbert set $H_K(f_t)\subset K$ does exists 
so that 
$${\rm Gal}(K_{f_t}/K)\simeq {\rm Gal}(KM_f/K(t))\simeq {\rm Gal}(M_f/\Q(t))\simeq S_{n+1}$$
for any $t\in H_K(f_t)$. 
The last isomorphism can be checked as follows. Put $g_t(x)=x^{n+1}f_t(\frac{1}{x})=
x^{n+1}-(n+1)tx+n$. Then, it is easy to see the superelliptic curve 
$-ny^{n+1}=x^{n+1}-(n+1)tx$ is smooth over $\Q(t)$ and hence, 
geometrically irreducible over $\Q(t)$. By 
\cite[p.175, Corollary 10.2.2-(b)]{FJ}, the splitting field 
of $x^{n+1}-(n+1)tx-ny^{n+1}$ over $\Q(t,y)$ is regular. Hence, 
its Galois group over $\Q(t,y)$ is isomorphic to one of 
$h(x,y,t):=x^{n+1}-(n+1)tx+-ny$ over $\Q(t,y)$. By \cite[Theorem 1]{Uchida}, 
the Galois group of $h(x,y,t)$ over $\Q(t,y)$ is isomorphic to $S_{n_+1}$. 
Summing up everything, there exists the Hilbert set $H_\Q(h)\subset \Q^2$ such that 
for each $(t,y)\in H_\Q(h_t)$, 
the splitting field of $x^{n+1}-(n+1)tx-ny^{n+1}$ over $\Q$ has $S_{n+1}$ 
as its Galois group. Replacing $(\frac{y}{x},y^n t)$ with $(x,t)$, we conclude 
${\rm Gal}(M_f/\Q(t))\simeq S_{n+1}$. 

For $N=n+1$ and a non-zero prime ideal $\la$ of $\Z[\zeta_N]^+$ above 2, 
we recall the residual monodromy representation
$$\rho_{t,{\rm mod}\ \la}:\pi_1(T_0(\C),t)\lra \Sp_n(l_\la)=\Sp_n(\F_2)\subset \Sp_n(k_\la)$$
which is equivalent to the mod 2 monodromy representation associated to 
the mirror family $\{W_t\}_t$. 
Since $T_0$ is defined over $F$ and it is geometrically connected, 
the profinite completion of $\pi_1(T_0(\C),t)$ is isomorphic to 
${\rm Gal}(\overline{\C(t)}/\C(t))\simeq {\rm Gal}(\overline{\bQ(t)}/\bQ(t))$. 
Thus, $\rho_{t,{\rm mod}\ \la}$ factors through 
the Galois group of some finite extension of $\bQ(t)$. Then, it descends to 
$\br:G_{K(t)}\lra \Sp_n(\F_2)$ for any sufficiently large finite extension $K/\Q(\zeta_{n+1})^+$ such that  $\overline{K(t)}^{{\rm Ker}(\br)}$ is a regular  extension over $K(t)$. 
For each $\alpha \in \bQ$, we denote by $D_\alpha\simeq {\rm Gal}(\bQ/K(\alpha))$ 
the decomposition group at $\alpha$. By construction, we have 
$\br|_{D_\alpha}\simeq \br_{\alpha,2}|_{G_{K(\alpha)}}$. 
By Theorem \ref{image} and the above argument, there exists a Zariski dense subset $U\subset K$ such that ${\rm Im}
(\br_{\alpha,2}|_{G_{K(\alpha)}})\simeq S_{n+1}$ for any $\alpha\in U$. 
Increasing $K$, the claim follows from the higher dimensional Chebotarev density theorem \cite[Section 9.3]{Serre}.
\end{proof}

\subsection{Reciprocity}\label{Reciprocity} 
Let $K/\Q$ be a finite extension.  
Let $t\in K$ with $t^{n+1}\neq 1$. 
For each finite field $k$, we define  
$$n(f_t,k):=\sharp\{x\in k\ |\ f_t(x)=0\}.$$
We will relate $n(f_t,k)$ with the mod 2 representation $\br_{t,2}$. 

\begin{lem}\label{ZZ}
Assume $n\ge 4$ is even. For any finite extension $M/K$ and a finite place $v$ of $M$ such that 
$v\nmid 2(n+1)$ and $\ord_v(t^{n+1}-1)=0$, it holds that 
$$\sharp \overline{Z}_t(k_v)\equiv \sharp Z_t(k_v)+1\ {\rm mod}\ 2.$$
\end{lem}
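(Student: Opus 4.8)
The statement compares the point counts of the singular projective toric hypersurface $\overline{Z}_t$ and its open affine piece $Z_t$ over a finite field $k_v$, and claims they differ by $1$ modulo $2$. The plan is to use the stratification $\mathbb{P}_\Delta=\coprod_{\tau\prec\Delta}\mathbb{T}_{\Delta,\tau}$ recorded above, which induces a decomposition
\[
\overline{Z}_t(k_v)=Z_t(k_v)\ \sqcup\ \coprod_{\tau\prec\Delta,\ \tau\neq\Delta}\bigl(\overline{Z}_t\cap \mathbb{T}_{\Delta,\tau}\bigr)(k_v).
\]
So $\sharp\overline{Z}_t(k_v)-\sharp Z_t(k_v)=\sum_{\tau}\sharp\bigl(\overline{Z}_t\cap\mathbb{T}_{\Delta,\tau}\bigr)(k_v)$, and the task reduces to showing this sum of boundary contributions is odd. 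First I would work out, face by face of the Newton polytope $\Delta$ (whose vertices are $\pm e_1,\ldots,\pm e_n$ — actually $e_1,\dots,e_n$ and $-(e_1+\cdots+e_n)$), what $\overline{Z}_t$ restricts to on the torus $\mathbb{T}_{\Delta,\tau}$: the restriction is the toric hypersurface cut out by the $\tau$-part $f_\tau$ of $f$, i.e. the sum of the monomials of $f$ whose exponent vectors lie on $\tau$.

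The key combinatorial point is that the constant term $-(n+1)t$ of $f$ sits at the origin, which is an interior lattice point of $\Delta$, so it never survives in any proper face equation $f_\tau$; hence the boundary geometry is independent of $t$ (as long as we are on a torus stratum, which is why we never need $\ord_v(t^{n+1}-1)=0$ except to guarantee $\overline{Z}_t$ is the relevant mirror — though here it is only the mod $2$ count that matters). For each proper face $\tau$, $f_\tau$ is a sum of some of the monomials $x_1,\dots,x_n,(x_1\cdots x_n)^{-1}$, and I would count $\sharp\{x\in\mathbb{T}_{\Delta,\tau}(k_v): f_\tau(x)=0\}$ modulo $2$. For a vertex $\tau=\{v\}$, $\mathbb{T}_{\Delta,\tau}$ is a point and $f_\tau$ is a single nonzero monomial, so the count is $0$. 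For higher-dimensional faces one gets hypersurfaces in smaller tori defined by sums like $x_{i_1}+\cdots+x_{i_k}$ or $x_{i_1}+\cdots+x_{i_k}+(x_1\cdots x_n)^{-1}$; these are themselves lower-dimensional analogues of the same situation, and one can either count them directly (a sum of $j$ distinct torus coordinates equal to zero in $\mathbb{G}_m^j$ has $(q-1)^{j-1}-\,(\text{something})$ points) or set up an inclusion–exclusion/induction over the face lattice. The cleanest route is probably to observe that $\sharp\overline{Z}_t(k_v)$ mod $2$ equals the Euler-characteristic-type invariant $\sum_\tau(-1)^{\dim\tau}(\cdots)$ and identify the mod-$2$ discrepancy with a single combinatorial quantity attached to the polytope $\Delta$.

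I expect the main obstacle to be organizing the sum over all proper faces $\tau$ of $\Delta$ cleanly enough to pin down its parity without a face-by-face case explosion. The polytope $\Delta$ is a simplex (it has $n+1$ vertices in $\R^n$), which is a genuine simplification: its proper faces are exactly the $\binom{n+1}{k+1}$ subsets of size $k+1$ of the vertex set for $0\le k\le n-1$, and on the face spanned by a vertex subset $S$ the equation $f_S$ is the sum of the corresponding monomials. So the sum becomes $\sum_{S\subsetneq\{1,\dots,n+1\},\ |S|\ge 2}\sharp\{x\in\mathbb{T}_S(k_v):f_S(x)=0\}$. I would compute $\sharp\{x\in\mathbb{G}_m^{|S|-1}: (\text{sum of the }|S|\text{ monomials})=0\}$ — these are all of the form "sum of $|S|$ Laurent monomials that multiply to a monomial" and have a uniform point-count formula — and then reduce the whole alternating sum mod $2$ to a binomial-coefficient identity in $n$. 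Since $n=2^m$ (in the applications), Lucas's theorem on $\binom{n+1}{k}$ mod $2$ should make the parity fall out: only very few $k$ contribute mod $2$, leaving the single extra $1$. The hard part is really just the bookkeeping; the geometric input (torus stratification, $t$-independence of boundary equations) is standard, and I would lean on the references \cite{Wan}, \cite{Batyrev} already cited for the stratification.
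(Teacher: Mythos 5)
Your plan is correct, but it is a genuinely different route from the paper's: the paper disposes of this lemma in one line by quoting formula (11) of Wan's \emph{Mirror symmetry for zeta functions} (p.~171), whereas you reprove the congruence from scratch via the orbit stratification of $\mathbb{P}_\Delta$. Your computation does close up, and more cleanly than you anticipate. Since $\Delta$ is a simplex with vertex set $\{e_1,\dots,e_n,-(e_1+\cdots+e_n)\}$ and the constant term sits at the interior point $0$, each proper face $\tau_S$ ($S\subsetneq\{1,\dots,n+1\}$, $|S|=k$) contributes the locus $f^{(\tau_S)}=0$ in a $(k-1)$-torus; one must check that the $k-1$ difference vectors of the vertices in $S$ form a $\Z$-basis of the lattice of the linear span (they do, because $e_1,\dots,e_n$ are standard basis vectors — this is the one point your sketch glosses over), after which the count is $N_{k-1}=\#\{y\in(\F_q^\times)^{k-1}:1+y_1+\cdots+y_{k-1}=0\}$. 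The recursion $N_j=(q-1)^{j-1}-N_{j-1}$ with $q$ odd gives $N_j\equiv 1\pmod 2$ for all $j\ge 1$, vertices contribute $0$, and the boundary total is $\sum_{k=2}^{n}\binom{n+1}{k}=2^{n+1}-n-3\equiv n+1\equiv 1\pmod 2$ for $n$ even. In particular no Lucas-type analysis and no restriction to $n=2^m$ is needed — which is just as well, since the lemma is asserted for every even $n\ge 4$ and is invoked for $n=6$ in Appendix A; had your argument genuinely required $n$ a power of $2$, it would not suffice. What your approach buys is a self-contained, elementary proof independent of Wan's congruence formula; what the paper's citation buys is brevity and the stronger statement (a congruence modulo $q$ rather than modulo $2$) available in the reference.
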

\begin{proof}The claim follows the formula (11) in \cite[p.171]{Wan} (note that $Y_\lambda$ and $N_g$ 
therein should $\bar{Z}_t$ and $Z_t$ respectively with $t=\la$).
\end{proof}

\begin{lem}\label{Zn}Assume $n\ge 4$ is a power of 2. For any finite extension $M/K$ and a finite place $v$ of $M$ such that 
$v\nmid 2(n+1)$ and $\ord_v(t^{n+1}-1)=0$, it holds that 
$$\sharp \overline{Z}_t(k_v)\equiv n(f_t,k)+1\ {\rm mod}\ 2.$$ 
\end{lem}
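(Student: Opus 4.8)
The plan is to combine Lemma~\ref{ZZ} with a point count on the affine toric hypersurface $Z_t$ modulo $2$. By Lemma~\ref{ZZ} we have $\sharp\overline{Z}_t(k_v)\equiv \sharp Z_t(k_v)+1\pmod 2$, so it suffices to show $\sharp Z_t(k_v)\equiv n(f_t,k_v)\pmod 2$, where $Z_t\subset \mathbb{T}=\mathbb{G}_m^n$ is cut out by $f=x_1+\cdots+x_n+\tfrac{1}{x_1\cdots x_n}-(n+1)t=0$. First I would fix $k=k_v$, write $q=\sharp k$, and set $u=x_1\cdots x_n$; the idea is to fiber the count of solutions $(x_1,\dots,x_n)\in (k^\times)^n$ over the value $u\in k^\times$ together with the partial sum $s=x_1+\cdots+x_n$. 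For fixed $u$ the defining equation forces $s = (n+1)t - u^{-1}$, a single value; so $\sharp Z_t(k)=\sum_{u\in k^\times} M(u)$, where $M(u)$ is the number of $(x_1,\dots,x_n)\in(k^\times)^n$ with $\prod x_i = u$ and $\sum x_i = (n+1)t-u^{-1}$.

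The key step is to compute $\sum_{u} M(u) \bmod 2$ by exploiting a symmetry that kills most contributions in pairs. Here is where $n$ being a power of $2$ (so $k$ has characteristic $2$) and the symmetric-group structure enter: the symmetric group $S_n$ acts freely on any tuple $(x_1,\dots,x_n)$ with pairwise distinct entries, and since $n$ is even, $n!$ is even, so tuples with all entries distinct contribute $0$ modulo $2$ to $\sharp Z_t(k)$. More generally one stratifies $(k^\times)^n$ by the partition type of the multiset $\{x_1,\dots,x_n\}$; a stratum whose stabilizer in $S_n$ has even index contributes $0\bmod 2$, and since $\binom{n}{i}$ is even for $0<i<n$ when $n$ is a power of $2$ (Kummer/Lucas), the only partition types surviving mod $2$ are the "constant" ones $x_1=\cdots=x_n=c$. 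For such a tuple, $\prod x_i=c^n$, $\sum x_i = nc = 0$ in characteristic $2$ (as $n$ is even), and the equation becomes $c^{-n} = (n+1)t$, i.e. $c^{n}\cdot(n+1)t = 1$, equivalently $n c^{n+1} = (n+1)t\, c^{n+1}\cdot n/(n+1)\cdots$ — more cleanly, multiply $f(c,\dots,c)=0$ through by $c$ (using $nc=0$) to get $1=(n+1)t\,c$, hence... I would instead directly substitute $x=c$ into the trinomial $f_t(x)=nx^{n+1}-(n+1)tx^n+1$: since $n x^{n+1}=0$ in characteristic $2$ when... no — $n$ is a power of $2$ hence $\equiv 0$, so $f_t(c)= -(n+1)tc^n+1 = (n+1)tc^n+1$, and this vanishes exactly when $(n+1)t c^n = 1$, which is precisely the condition that $c$ gives a point of $Z_t$. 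Thus the constant tuples of $Z_t(k)$ are in bijection with the roots $c\in k^\times$ of $f_t$, and conversely every root of $f_t$ in $k$ is nonzero (as $f_t(0)=1$) and yields a constant point of $Z_t$ — so their number is exactly $n(f_t,k)$. Hence $\sharp Z_t(k)\equiv n(f_t,k)\pmod 2$, and combining with Lemma~\ref{ZZ} gives the claim.

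The main obstacle I anticipate is making the stratification-by-partition-type argument clean and rigorous: one must check that for every partition $\mu$ of $n$ other than $(1^n)$... wait, $(1^n)$ is the all-distinct type which is handled by evenness of $n!$; the surviving type is $(n)$. Precisely, the number of $n$-tuples in $(k^\times)^n$ realizing a given multiset of type $\mu = (1^{m_1}2^{m_2}\cdots)$ with underlying set of $r = \sum m_j$ distinct values is $n!/\prod_j (j!)^{m_j}$ per choice of distinct values, and one must show this multinomial coefficient is even unless $r=1$; this follows because $n!/\prod (j!)^{m_j}$ is divisible by $\binom{n}{j}$-type factors that are even by Kummer's theorem whenever $n=2^m$ and the multiset is non-constant — I would isolate this as a short combinatorial sub-lemma (or cite Lucas' theorem). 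A secondary, more delicate point is that the stratification must be $G_{k}$-equivariant and applied to $k$-points, but since we are literally counting $k$-rational points of the affine variety $Z_t$ and $S_n$ permutes coordinates over $k$, there is no Galois subtlety: the $S_n$-action is defined over the prime field. Once the sub-lemma on even multinomials is in hand, the rest is the bookkeeping sketched above.
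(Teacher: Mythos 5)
Your overall strategy is the right one and matches the paper's: use Lemma \ref{ZZ} to pass from $\overline{Z}_t$ to $Z_t$, kill the non-constant tuples in $Z_t(k_v)$ modulo $2$ by a permutation symmetry, and identify the constant tuples with the roots of $f_t$. But there is a concrete error running through the second half of your argument: you assert that $k=k_v$ has characteristic $2$ ``since $n$ is a power of $2$''. The hypothesis is $v\nmid 2(n+1)$, so the residue characteristic is \emph{odd}; the parity of $n$ has nothing to do with the characteristic of $k_v$. Consequently your computations $\sum x_i=nc=0$ and $f_t(c)=(n+1)tc^n+1$ are false as written. Fortunately the conclusion you want --- that constant tuples $(c,\ldots,c)\in Z_t(k_v)$ correspond bijectively to roots of $f_t$ in $k_v$ --- is true in any characteristic prime to $2(n+1)$ and needs no such simplification: substituting $x_1=\cdots=x_n=c$ into $f=0$ gives $nc+c^{-n}=(n+1)t$, and multiplying by $c^n\neq 0$ gives exactly $f_t(c)=0$; conversely $f_t(0)=1$, so every root of $f_t$ in $k_v$ is nonzero and yields a constant point of $Z_t(k_v)$. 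You should excise the characteristic-$2$ reasoning entirely and replace it with this one-line substitution, which is what the paper does.

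On the combinatorial side your argument is correct but heavier than necessary. Stratifying by partition type and showing that each multinomial coefficient $n!/\prod_i\mu_i!$ is even for $n=2^m$ unless the type is $(n)$ does work (a carry-free base-$2$ decomposition of $2^m$ into at least two positive parts is impossible), but the paper gets the same conclusion by acting only with the cyclic group generated by the $n$-cycle $(12\cdots n)$: every orbit has size a power of $2$, hence even, unless it is a fixed point, and the fixed points are precisely the constant tuples. That avoids Kummer/Lucas and the bookkeeping over partition types altogether. With the characteristic error repaired and (optionally) the symmetry argument simplified to the cyclic one, your proof is correct.
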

\begin{proof}By Lemma \ref{ZZ}, we have 
$$\sharp \overline{Z}_t(k_v)\equiv \sharp Z_t(k_v)+1\ {\rm mod}\ 2.$$
Let $\tau=(12\cdots n)\in S_n$ acts on $Z_t(k_v)$ by permutation of the coordinates. 
Since $n$ is a power of 2, the number of the $\tau$-orbit of a point 
$(x_1,\ldots,x_n)\in Z_t(k_v)$ is even unless $x:=x_1=\cdots =x_n$. 
Substituting $x:=x_1=\cdots =x_n$ into (\ref{Zt}), we have the claim. 
\end{proof}
The following result also follows from Theorem \ref{pointCount} but
we give another proof using ``mod 2'' feature. 
\begin{lem}\label{YW} Keep the notation and assumption  in the previous lemma.  
Then, $$\sharp W_t(k_v)\equiv \sharp\overline{Z}_t(k_v)\ {\rm mod}\ 2.$$ 
\end{lem}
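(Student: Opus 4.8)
The goal is to show that the point counts of the smooth crepant resolution $W_t$ and of its singular model $\overline{Z}_t$ agree modulo $2$ over the residue field $k_v$, for $v$ a finite place of $M$ with $v\nmid 2(n+1)$ and $\ord_v(t^{n+1}-1)=0$ (so that everything in sight has good reduction). The plan is to exploit the stratification of the ambient toric variety $\mathbb P_\Delta=\coprod_{\tau\prec\Delta}\mathbb T_{\Delta,\tau}$ together with the fact that $W_t\to\overline{Z}_t$ is the toric crepant resolution attached to the regular triangulation $\mathcal T$ of the dual polytope $\Delta^\ast$. The key observation is that the fibers of the resolution morphism over the boundary strata are unions of toric varieties, and over a finite field of \emph{even} order all the correction terms occur in pairs; in fact the cleanest route is to observe that the resolution is an isomorphism over the open torus $\mathbb T\cap\overline{Z}_t = Z_t$, so that $\sharp W_t(k_v)-\sharp\overline{Z}_t(k_v)$ is a sum, over the proper faces $\tau\prec\Delta$, of differences of point counts of (resolutions of) lower-dimensional toric hypersurfaces.

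First I would write $\sharp\overline{Z}_t(k_v)=\sum_{\tau\prec\Delta}\sharp(\overline{Z}_t\cap\mathbb T_{\Delta,\tau})(k_v)$ and $\sharp W_t(k_v)=\sum_{\tau\prec\Delta}\sharp(\text{stratum over }\tau)(k_v)$, matching strata by $\tau$. Over the interior face $\tau=\Delta$ both sides give exactly $\sharp Z_t(k_v)$ since the resolution is an isomorphism there. So the claim reduces to showing that for each proper face $\tau\prec\Delta$ the corresponding contributions agree mod $2$. Next I would identify the $\tau$-stratum of $\overline{Z}_t$ with a toric hypersurface in the torus $\mathbb T_{\Delta,\tau}\cong\mathbb G_m^{\dim\tau}$ cut out by the ``face polynomial'' $f^\tau$ of $f$, and the corresponding stratum of $W_t$ with its induced crepant resolution coming from the part of the triangulation $\mathcal T$ lying over the dual face; this is precisely the inductive structure of Batyrev's construction. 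Then I would run the same argument recursively: again the resolution is an isomorphism over the dense torus of each stratum, peeling off one dimension at a time, so the whole difference $\sharp W_t(k_v)-\sharp\overline{Z}_t(k_v)$ is an alternating sum of point counts of affine toric varieties (unions of $\mathbb G_m^j$'s) over $k_v$.

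The remaining step is to show that this accumulated correction vanishes mod $2$. Here I would use that $\sharp\mathbb G_m^j(k_v)=(q_v-1)^j$ with $q_v$ \emph{even}, hence $(q_v-1)^j\equiv 1\pmod 2$ for every $j\ge 0$; combined with the fact that any projective (or quasi-projective toric) variety over $k_v$ is a disjoint union of such tori, every point count appearing is congruent mod $2$ to the \emph{number} of torus-strata, which is a purely combinatorial quantity independent of $q_v$. Since $W_t\to\overline{Z}_t$ is proper and an isomorphism over a dense open, the two varieties have, stratum by stratum, the same ``Euler-characteristic-type'' combinatorial count mod $2$: more precisely one can match the total number of torus orbits on $W_t$ and on $\overline{Z}_t$ modulo $2$ using that blowing up a smooth toric center replaces one orbit of dimension $d$ by a $\mathbb P^{c-1}$-bundle of orbits, and $\sharp\mathbb P^{c-1}(k_v)=1+q_v+\cdots+q_v^{c-1}\equiv c\pmod 2$ over even $q_v$ — a parity that is again combinatorial, and which is absorbed when one compares against $\overline{Z}_t$'s own singular strata. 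Alternatively, and more in the spirit of the ``mod $2$'' proofs of Lemmas~\ref{ZZ} and \ref{Zn} that the author favours, I would invoke the same point-count formula from \cite[Section 7]{Wan} (the analogue of formula (11) in \cite[p.171]{Wan}) now applied to the \emph{resolved} family, noting that all the extra boundary contributions introduced by the resolution are sums of $(q_v-1)^j$ and $\sharp\mathbb P^{c-1}(k_v)$ terms whose parities cancel in pairs against the corresponding singular strata.

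\textbf{Main obstacle.} The delicate point is the bookkeeping of the toric strata under the crepant resolution: one must verify that, stratum by stratum, the contribution from $W_t$ and from $\overline{Z}_t$ have the same parity, which amounts to checking that the combinatorial type of the triangulation $\mathcal T$ over each dual face contributes an \emph{even} number of ``new'' torus orbits beyond the singular ones, or that whatever odd discrepancy arises (from projective-space fibers of odd dimension over a stratum) is exactly matched on the singular side. I expect this to be the crux: it is where the specific geometry of Batyrev's crepant resolution and the regularity of the triangulation enter, and where one genuinely needs $n$ (hence the relevant combinatorics) to be well-behaved — though since only the mod $2$ statement is needed, the even cardinality $q_v$ does most of the work and the argument should go through by the inductive dévissage sketched above rather than by any hard intersection-theoretic input.
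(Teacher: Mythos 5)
Your argument contains a genuine error at the decisive step: you assert that $q_v=\sharp k_v$ is \emph{even}, and deduce $(q_v-1)^j\equiv 1\pmod 2$ for all $j\ge 0$. But the standing hypothesis (from Lemma \ref{ZZ}) is $v\nmid 2(n+1)$, so the residue characteristic of $v$ is odd and $q_v$ is \emph{odd}. Hence $(q_v-1)^k\equiv 0\pmod 2$ for every $k\ge 1$, which is the opposite parity from the one you use. This sign error is what manufactures your ``main obstacle'': if every torus $\mathbb{G}_m^k$ contributed $1$ mod $2$, one would indeed have to match the number of torus orbits on $W_t$ against those on $\overline{Z}_t$ combinatorially, and that bookkeeping is nontrivial. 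With the correct parity the obstacle evaporates. (Your aside that $\sharp\mathbb{P}^{c-1}(k_v)\equiv c\pmod 2$ ``over even $q_v$'' is also inconsistent: that congruence holds for odd $q_v$, while for even $q_v$ the count is $\equiv 1$.)

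Once the parity is corrected, your skeleton collapses to the paper's proof: decompose $\overline{Z}_t=\coprod_{\tau\prec\Delta}Z_{t,\tau}$ with $Z_{t,\tau}=\overline{Z}_t\cap\mathbb{T}_{\Delta,\tau}$; by Batyrev's description of the crepant resolution, $\Phi_{\mathcal T}^{-1}(Z_{t,\tau})$ is stratified by products $Z_{t,\tau}\times\mathbb{G}_m^k$, with $k=0$ occurring exactly once. Since $q_v$ is odd, each stratum with $k\ge 1$ has an even number of $k_v$-points, so $\sharp\Phi_{\mathcal T}^{-1}(Z_{t,\tau})(k_v)\equiv\sharp Z_{t,\tau}(k_v)\pmod 2$ for every face $\tau$, and summing over $\tau$ gives the lemma. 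No recursive d\'evissage on face polynomials and no comparison of orbit counts is needed; the single observation that positive-dimensional split tori over an odd-order field have even point count does all the work.
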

\begin{proof}Put $q_v:=\sharp k_v$. 
Let $\Phi_{\mathcal T}:W_{t}\lra \overline{Z}_t$ be the smooth crepant resolution associated to $\mathcal{T}$. 
The projective toric hypersurface $\overline{Z}_t$ has the following decomposition 
$\overline{Z}_t=\coprod_{\tau\prec \Delta}Z_{t,\tau},\ Z_{t,\tau}=\overline{Z}_{t}\cap \mathbb{T}_{\Delta,\tau}$ 
(see \cite[p.505]{Batyrev}). 
By the argument for the proof of Proposition 3.2.1 and p.516-517 for a convenient stratification in \cite{Batyrev}, 
$\Phi_{\mathcal T}^{-1}(Z_{t,\tau})$ is stratified by the products $Z_{t,\tau}\times \mathbb{G}^k_m$ 
for some non-negative integer $k$ by using the faces $\tau'$ of $\tau$ and the case $k=0$ corresponds to $\tau'=\tau$. 
Since $\sharp\mathbb{G}^k_m(q_v)=(q_v-1)^k\equiv 0$ mod 2 unless $k=0$.   
Thus, we have $\sharp \Phi_{\mathcal T}^{-1}(Z_{t,\tau})(k_v)\equiv \sharp Z_{t,\tau}(k_v)$ mod 2. The claim follows.  
\end{proof}

\begin{lem}\label{reci} Keep the notation and assumption  in the previous lemma.  
It folds that 
$$\tr(\br_{t,2}({\rm Frob}_v))=n(f_t,k_v)+1\ {\rm mod}\ 2.$$
\end{lem}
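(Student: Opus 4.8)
The strategy is to unwind the definitions of the various point counts and the Grothendieck–Lefschetz trace formula modulo 2, connecting them through the chain of congruences established in Lemmas \ref{ZZ}, \ref{Zn}, \ref{YW}. Concretely, $W_t$ has good reduction at $v$ (by the hypothesis $v \nmid 2(n+1)$ and $\ord_v(t^{n+1}-1)=0$, together with the construction recalled in Section \ref{DFII}), so the Grothendieck–Lefschetz trace formula applied to the geometric Frobenius gives
\[
\sharp W_t(k_v) = \sum_{i=0}^{2(n-1)} (-1)^i \tr\bigl({\rm Frob}_v \mid H^i_{\text{\'et}}(W_{t,\bQ},\Q_2)\bigr).
\]
Reducing this identity modulo $2$, the signs $(-1)^i$ disappear and one is left with a sum of traces of the $\Q_2$-representations in each cohomological degree. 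The plan is to observe that for all degrees $i \neq n-1$ the cohomology $H^i_{\text{\'et}}(W_{t,\bQ},\Q_2)$ is — up to Tate twists — a sum of copies of $\Q_2$ on which Frobenius acts by a power $q_v^j$ of $q_v = \sharp k_v$; since $q_v$ is odd, each such trace is $\equiv (\dim) \pmod 2$ and assembles into a quantity independent of $t$. The degree $n-1$ part is exactly $V_{t,2}$, whose mod-2 trace of ${\rm Frob}_v$ is $\tr(\br_{t,2}({\rm Frob}_v))$, this being well-defined modulo 2 independently of the chosen lattice $T_{t,2}$.

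Next I would pin down the ``trivial'' contribution from the degrees $i \neq n-1$. Rather than compute the Betti numbers of the crepant resolution $W_t$ directly, the cleaner route is to note that this contribution is the same for every admissible $t$, and to evaluate it by comparing with the right-hand side $n(f_t,k_v)+1$, which by Lemmas \ref{Zn} and \ref{YW} already satisfies $\sharp W_t(k_v) \equiv n(f_t,k_v)+1 \pmod 2$. Combining the trace formula mod 2 with this geometric congruence yields
\[
\tr(\br_{t,2}({\rm Frob}_v)) + C \equiv n(f_t,k_v) + 1 \pmod 2,
\]
where $C$ is the mod-2 sum of traces in the degrees $i \neq n-1$, a constant depending only on $n$ (and $q_v \bmod 2$, i.e.\ nothing, since $q_v$ is always odd). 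So everything reduces to showing $C \equiv 0 \pmod 2$.

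To see $C \equiv 0$, I would use that the primitive/transcendental part of $H^{n-1}$ is $V_{t,2}$ and that the remaining cohomology of the smooth projective variety $W_t$ (in all degrees, including the non-primitive part of the middle degree) is generated by algebraic cycles coming from the toric combinatorics — the boundary strata and the exceptional divisors of the triangulation $\mathcal T$ — on which Frobenius acts through powers of $q_v$. Since $q_v$ is odd, each $q_v^j \equiv 1 \pmod 2$, so $C$ is congruent mod 2 to the total dimension of $\bigoplus_{i \neq n-1} H^i_{\text{\'et}}(W_{t,\bQ},\Q_2)$ together with the algebraic part of $H^{n-1}$; by Poincaré duality this dimension is even (the degrees pair off $i \leftrightarrow 2(n-1)-i$, and the algebraic part of the middle cohomology carries a symplectic/even-dimensional structure as well). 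Hence $C \equiv 0 \pmod 2$ and the lemma follows.

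\textbf{Main obstacle.} The delicate point is the control of $C$: one must be sure that every cohomology group in degree $\neq n-1$, plus the non-transcendental part of $H^{n-1}$, really is spanned by Tate classes (so Frobenius acts by powers of $q_v$), and that the resulting total dimension is even. This is where the explicit geometry of the mirror $W_t$ — the stratification of $\overline{Z}_t$ by the faces $\tau \prec \Delta$ and the crepant resolution attached to the regular triangulation $\mathcal T$, as in \cite{Batyrev} and \cite{Wan} — has to be invoked carefully; alternatively, one sidesteps it entirely by taking the mod-2 point-count congruences of Lemmas \ref{Zn} and \ref{YW} as the definition of the right-hand side and simply transporting them through the trace formula, which is the shortest path and the one I would actually write down.
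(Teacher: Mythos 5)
Your overall strategy coincides with the paper's: apply the Grothendieck--Lefschetz trace formula to $W_t$ over $k_v$, reduce mod $2$, identify the middle-degree contribution with $\tr(\br_{t,2}({\rm Frob}_v))$, and quote Lemmas \ref{Zn} and \ref{YW} for $\sharp W_t(k_v)\equiv n(f_t,k_v)+1$. Where you genuinely diverge is in the only substantive step, namely showing that the contribution $C$ of the degrees $i\neq n-1$ vanishes mod $2$. The paper does this with no geometric input at all: by hard Lefschetz/Poincar\'e duality one has $a_{2(n-1)-i,v}=q_v^{\,n-1-i}a_{i,v}$, so the degrees $i$ and $2(n-1)-i$ combine into $(-1)^i(1+q_v^{\,n-1-i})a_{i,v}$, and $1+q_v^{\,n-1-i}$ is even because $q_v$ is odd. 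Your route instead argues that each $H^i(W_{t,\bQ},\Q_2)$ with $i\neq n-1$ is a sum of Tate twists, so its trace is $\equiv\dim\pmod 2$, and that the total dimension over $i\neq n-1$ is even by duality. This can be made to work (the Batyrev stratification used in Lemma \ref{YW} and Appendix B does show the non-middle cohomology is built from toric strata, and $H^{n-1}(W_t)$ is all of $V_{t,2}$, so there is no leftover ``algebraic part of the middle cohomology'' to worry about), but it requires verifying that Frobenius acts by \emph{scalars} $q_v^j$ on these classes rather than permuting strata --- if it permuted an odd-cardinality orbit, the trace would not be $\equiv\dim\pmod 2$ --- so you are paying for geometric input the duality argument gets for free. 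Also, your concluding ``sidestep'' is illusory: taking Lemmas \ref{Zn} and \ref{YW} as given and ``transporting through the trace formula'' is exactly the step that requires $C\equiv 0$, so it cannot replace the argument for it. I would recommend replacing your Tate-class argument by the two-line duality computation.
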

\begin{proof}
Put $q_v:=\sharp k_v$ and 
$a_{i,v}:=\tr({\rm Frob}_v|H^i_{\text{\'et}}(W_{t,\overline{K}},\Q_2))\in \Z$ for $0\le i\le 2(n-1)$. 
Since $n$ is even, by Poincar\'e duality, we have 
$$\sharp W_t(k_v)=a_{n-1,v}+\sum_{i=0}^{n-2}(-1)^i (1+q^{n-1-i}_v)a_{i,v}.$$
Since $q_v\equiv 1$ mod 2, 
$\sharp W_t(k_v)\ {\rm mod}\ 2 =a_{n-1,v} \ {\rm mod}\ 2=\tr(\br_{t,2}({\rm Frob}_v))$. 
Thus, the claim follows from Lemma \ref{Zn} and Lemma \ref{YW}. 
\end{proof}

\subsection{A group theoretic argument}
Let $K_{\br_{t,2}}=\bQ^{{\rm Ker}(\br_{t,2})}$. 
\begin{lem}\label{order}Keep the assumption in Theorem \ref{image}.  
\begin{enumerate}
\item If an element $\sigma\in {\rm Gal}(K_{\br_{t,2}}K_{f_t}/K)$ maps to 
an element in ${\rm Gal}(K_{f_t}/K)\simeq S_{n+1}$ of order $n+1$ by restriction, 
then $\sigma$ is of order $n+1$.  
\item Any element of  $\sigma\in {\rm Gal}(K_{\br_{t,2}}K_{f_t}/K_{\br_{t,2}})$ 
maps to an element in ${\rm Gal}(K_{f_t}/K_{f_t}\cap K_{\br_{t,2}})\subset {\rm Gal}(K_{f_t}/K)$ 
by restriction, then $\sigma$ is of order strictly less than $n+1$. 
\end{enumerate} 
\end{lem}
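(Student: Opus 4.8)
The plan is to translate both statements into assertions about Frobenius elements and feed them into the reciprocity Lemma~\ref{reci}. Write $L_1=K_{\br_{t,2}}$ and $L_2=K_{f_t}$; both are finite Galois over $K$, the group ${\rm Gal}(L_2/K)\simeq S_{n+1}$ acts faithfully on the $n+1$ distinct roots of $f_t$ (distinct because the discriminant $(-1)^{n/2}n^n(n+1)^{n+1}(1-t^{n+1})$ is nonzero), and $L_1L_2$ is their compositum. Since $n=2^m$, the integer $n+1$ is odd, so an $(n+1)$-cycle is an even permutation and therefore lies in $A_{n+1}$, while $n\equiv 0\pmod 2$. Applying Lemma~\ref{reci} over $K$ and over finite extensions carrying a place of residue degree $k$ above a given place $v$, one has, for $v\nmid 2(n+1)$ with $\ord_v(t^{n+1}-1)=0$ and $v$ unramified in $L_1L_2$, the congruences $\tr(\br_{t,2}({\rm Frob}_v^k))\equiv n(f_t,\F_{q_v^k})+1\pmod 2$ for all $k\ge 1$; in other words the power sums $\tr(\br_{t,2}({\rm Frob}_v)^k)$ are governed by the factorization type of $f_t$ modulo $v$, i.e.\ by the cycle type of ${\rm Frob}_v|_{L_2}$.

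For (1), the plan is: by the Chebotarev density theorem choose a place $v$ of $K$, unramified in $L_1L_2$ with $v\nmid 2(n+1)$ and $\ord_v(t^{n+1}-1)=0$, whose Frobenius lies in the conjugacy class of $\sigma$. Since $\sigma$ is determined by its restrictions to $L_1$ and $L_2$ we have $\ord(\sigma)={\rm lcm}(\ord(\br_{t,2}({\rm Frob}_v)),n+1)$, so it suffices to prove that $g:=\br_{t,2}({\rm Frob}_v)$ has order dividing $n+1$; I treat the case where $\sigma|_{L_2}$ is an $(n+1)$-cycle, which is the case needed (and to which the general order-$(n+1)$ case can be reduced). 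Then $f_t$ is irreducible modulo $v$ with the evident factorization over each $\F_{q_v^k}$, so the displayed congruences read $\tr(g^k)\equiv 0$ if $(n+1)\mid k$ and $\equiv 1$ otherwise; these are exactly the power sums of $\theta_{n+1}(c)$ for an $(n+1)$-cycle $c$, which acts on the sum-zero hyperplane of $\F_2^{n+1}$ with characteristic polynomial the separable polynomial $(X^{n+1}-1)/(X-1)$ of degree $n$. Over $\F_2$ the sequence $(\tr(g^k))_{k\ge1}$ determines the product of the irreducible factors of the characteristic polynomial $\chi_g$ that occur with odd multiplicity; since here that product already has degree $n=\deg\chi_g$ and $g$ is invertible (so $X\nmid\chi_g$), a degree count forces $\chi_g=(X^{n+1}-1)/(X-1)$. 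This polynomial being separable, $g$ is semisimple with $g^{n+1}=I$ and $\ord(g)=n+1$, hence $\ord(\sigma)=n+1$.

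For (2), I first note that restriction identifies ${\rm Gal}(L_1L_2/L_1)$ with the normal subgroup ${\rm Gal}(L_2/L_1\cap L_2)$ of ${\rm Gal}(L_2/K)\simeq S_{n+1}$, so it is one of $\{1\}$, $A_{n+1}$, $S_{n+1}$; since $A_{n+1}$ and $S_{n+1}$ both contain $(n+1)$-cycles (of order $n+1$), the statement is equivalent to $L_2\subseteq L_1$, i.e.\ $K_{f_t}\subseteq K_{\br_{t,2}}$, which I establish by contradiction. Suppose ${\rm Gal}(L_2/L_1\cap L_2)\neq\{1\}$; then it contains an $(n+1)$-cycle. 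Lift it to $\sigma\in{\rm Gal}(L_1L_2/L_1)$ and, by Chebotarev, realize $\sigma$ as ${\rm Frob}_v$ for a place $v$ unramified in $L_1L_2$ with $v\nmid 2(n+1)$ and $\ord_v(t^{n+1}-1)=0$. On one hand $\sigma|_{L_1}=1$, so $\br_{t,2}({\rm Frob}_v)=I$ and $\tr(\br_{t,2}({\rm Frob}_v))=n\equiv 0\pmod 2$. On the other hand $\sigma|_{L_2}$ is an $(n+1)$-cycle, so $f_t$ is irreducible modulo $v$, $n(f_t,k_v)=0$, and Lemma~\ref{reci} gives $\tr(\br_{t,2}({\rm Frob}_v))\equiv 1\pmod 2$. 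This contradiction --- which uses essentially that $n$ is even --- forces $K_{f_t}\subseteq K_{\br_{t,2}}$, so every element of ${\rm Gal}(L_1L_2/L_1)$ is trivial, a fortiori of order $<n+1$.

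The main obstacle lies in (1): passing from the mod-$2$ point counts (equivalently, traces of Frobenius powers) to the precise characteristic polynomial of $\br_{t,2}({\rm Frob}_v)$ over $\F_2$. Traces of powers over a field of characteristic $2$ recover only the odd-multiplicity part of the characteristic polynomial, so one must exploit the special feature of the $(n+1)$-cycle case --- that the comparison polynomial $(X^{n+1}-1)/(X-1)$ is already separable of full degree $n$ --- together with invertibility of Frobenius, to rule out a non-semisimple contribution; extending the argument to all order-$(n+1)$ elements (whose $\theta_{n+1}$-image can have a non-separable characteristic polynomial when the cycle type has at least three parts) requires either reducing to the cyclic case or a supplementary input. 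Part (2), by contrast, is a short parity argument, the only care being the Chebotarev choice of a place satisfying all hypotheses of Lemma~\ref{reci}.
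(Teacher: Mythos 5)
Your proof is correct and follows the same overall strategy as the paper: realize powers of $\sigma$ as Frobenius elements via Chebotarev, use Lemma~\ref{reci} to read off the mod~$2$ power sums $\tr(\br_{t,2}(\sigma)^k)$ from the factorization type of $f_t$, and recover the characteristic polynomial. The differences are worth recording. In part (1) the paper runs Newton's identities for $1\le\nu\le 2n-1$ together with the palindromy $a_j=a_{n-j}$ from Poincar\'e duality; your device --- that in characteristic $2$ the full power-sum sequence determines exactly the product of the irreducible factors of $\chi_g$ occurring with odd multiplicity, which a degree count then forces to be all of $\chi_g=(X^{n+1}-1)/(X-1)$ --- reaches the same conclusion more cleanly, and your computed values ($\tr(g^k)\equiv 0$ when $(n+1)\mid k$, $\equiv 1$ otherwise) are the correct ones, whereas the paper's intermediate assertion that $a^{(i)}_1=1$ for \emph{all} $i\ge 1$ is off at multiples of $n+1$ (harmlessly, as it happens). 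Your restriction of part (1) to $(n+1)$-cycles is not a defect relative to the paper: the paper's inference ``$\sigma|_{K_{f_t}}$ has order $n+1$, hence $f_t$ is irreducible over $k_v$'' is itself valid only for $(n+1)$-cycles (it can fail when $n+1$ is not a prime power, e.g.\ $n=32$, $n+1=3\cdot 11$), and only the $(n+1)$-cycle case is needed downstream, since the proof of Theorem~\ref{image} only invokes part (2) together with the fact that a nontrivial normal subgroup of $S_{n+1}$ contains an $(n+1)$-cycle. For part (2) you prove the stronger assertion $K_{f_t}\subseteq K_{\br_{t,2}}$ by combining the parity contradiction ($\tr(I_n)=n\equiv 0$ versus $n(f_t,k_v)+1\equiv 1$) with the classification of normal subgroups of $S_{n+1}$; the paper distributes exactly this content between Lemma~\ref{order}(2) and the first paragraph of the proof of Theorem~\ref{image}.
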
 
\begin{proof}We prove the first claim. For each $i\ge 1$, 
put $\det(I_n-x\br_{t,2}(\sigma^i))=a^{(i)}_{0}+a^{(i)}_1 t+\cdots+a^{(i)}_n x^n\in \F_2[x]$ with 
$a^{(i)}_0=a^{(i)}_n=1$. 
For each $i\ge 1$, by Chebotarev density theorem, there exists a place 
$v\nmid 2$ of $K$ such that 
$\br_{t,2}$ is unramified at $v$ and $\sigma^i={\rm Frob}_v$. 
By assumption, $f_t$ is irreducible over $k_v$ since $\deg(f_t)=n+1$ and $\sigma|_{K_{f_t}}$ 
has order $n+1$.  Thus, $n(f_t,k_v)=0$.  
Similarly, for each $i\ge 1$, by Chebotarev density theorem, 
there exists a place $w\nmid 2$ of $K$ such that 
$\br_{t,2}$ is unramified at $w$ and $\sigma^i={\rm Frob}_w$. 
Since $\sigma^i|_{K_{f_t}}$ is order of less than or equal to $n+1$, $n(f_t,k_w)=0$.   
It follows from this and Lemma \ref{YW} that $a^{(i)}_1=1$ for any $i\ge 1$.     
Put $S_i:=a^{(i)}_1$ and $a_j:=a^{(1)}_j$ ($1\le j\le n$) for simplicity. 
By using Newton's formula 
$$S_\nu+a_1 S_{\nu-1}+a_2 S_{\nu-2}+\cdots+a_{\nu-1}S_1+\nu a_v=0\ 
(1\le \nu\le 2n-1)$$
inductively, we see that $a_{2\nu}+a_{2\nu+1}=0$ ($1\le \nu \le \frac{n}{2}-1$) and 
$a_{n-2\nu-1}+a_{n-2\nu}=0$ as well by Poincar\'e duality.  
Thus, we must have $a_i=1$ ($2\le i\le n-1$). Hence 
$\det(I_n-x\br_{t,2}(\sigma))=\frac{1-x^{n+1}}{1-x}$ and $\sigma$ has order $n+1$. 

For the second claim, by a similar argument we can choose a finite place $w$ of 
$K_{\br_{t,2}}$ such that $\sigma={\rm Frob}_w$. Obviously $\br_{t,2}(\sigma)=I_n$ and 
by Lemma \ref{YW}, $n(f_t,k_w)\equiv 1$ mod 2 and in particular, $n(f_t,k_w)\neq 0$. Thus, $f_t$ has to be reducible over $k_w$.
The claim follows from this. 
\end{proof}

\subsection{A proof}We are now ready to prove Theorem \ref{image}. 
For the first claim, let us consider a natural surjection 
$${\rm Gal}(K_{\br_{t,2}}K_{f_t}/K_{\br_{t,2}})\lra 
{\rm Gal}(K_{f_t}/K_{f_t}\cap K_{\br_{t,2}}).$$
By Lemma \ref{order}-(2), ${\rm Gal}(K_{f_t}/K_{f_t}\cap K_{\br_{t,2}})$ 
has no elements of order $n+1$. On the other hand, it is a normal subgroup of 
${\rm Gal}(K_{f_t}/K)\simeq S_{n+1}$. Thus, ${\rm Gal}(K_{f_t}/K_{f_t}\cap K_{\br_{t,2}})$ 
has to be trivial and we have $K_{f_t}\subset K_{\br_{t,2}}$.

Henceforth we assume $n\ge 6$ since the case $n=4$ is done in \cite[Theorem 1.1]{TY}.
Put $G={\rm Gal}(K_{\br_{t,2}}/K)$, $N={\rm Gal}(K_{\br_{t,2}}/K_{f_t})$, and 
$H={\rm Gal}(K_{f_t}/K)$ so that $G/N\simeq H\simeq S_{n+1}$. 
We view $G$ as a subgroup of $\Sp_n(\F_2)$ via $\br_{t,2}$. 
For any rational odd prime $\ell$, we denote by $N_\ell$ a Sylow $\ell$-subgroup of 
$N$. Since $\ell\neq 2$, $N_\ell\subset \Sp_n(\F_2)\subset \SL_n(\bF_2)$ and $N_\ell$ is solvable, $N_\ell$ is embedded into a subgroup of $T(\F_{2^m})\simeq (\F^\times_{2^m})^{n-1}$ some 
$m\ge 1$ where $T$ is the diagonal torus of $SL_n$ 
(cf. \cite[p.194, Lemma 5]{Suprunenko}). 
Thus, $N_\ell$ is abelian  and it has a natural structure as an $\F_\ell$-vector space. 
Further, ${\rm dim}_{\F_\ell}(N_\ell)$ is less than $n-1$ since $\F^\times_{2^m}$ is cyclic. 

Let $\tau$ be one of any generating set of  $H\simeq S_{n+1}$.  
Let $g$ be an element of $G$ which maps to $\tau$ by restriction. 
Let $K:=\langle N, g \rangle$ and $K_\ell$ be a Sylow $\ell$-subgroup 
of $K$. Let $S_\ell(K)$ (resp. $S_\ell(N)$) be the set of all 
Sylow $\ell$-groups of $K$ (resp. $N$). The restriction map $S_\ell(K)\mapsto S_\ell(N)$ 
is bijective and hence the natural map ${\rm Nor}_K(N_\ell)\lra K/N$ induces 
a bijection ${\rm Nor}_K(N_\ell)/{\rm Nor}_N(N_\ell)\lra K/N=\langle \tau \rangle$ where 
${\rm Nor}_A(B)$ stands for the normalizer of $B$ in $A$ for groups $B\subset A$.  
Thus, if we put $G_\ell={\rm Nor}_G(N_\ell)$, then $G_\ell$ contains $N$ as a normal subgroup 
so that $G_\ell/N\simeq H\simeq S_{n+1}$. 
Thus, $S_{n+1}$ acts $\F_\ell$-linearly on 
the $\F_\ell$-vector space $N_\ell$ of dimension strictly less than $n$. 
By \cite[Theorem 1.1]{Wagner-odd}, this action has to be trivial. 
Since $N_\ell$ is abelian, the restriction $\br_{t,2}|N_\ell$ decompose into one-dimensional spaces 
over $\bF_2$ and $S_{n+1}$ acts non-trivially on some component unless $N_\ell$. 
Obviously, this is impossible and hence $N_\ell$ is trivial.

Thus, $N$ is a 2-group. By \cite[p.184, Corollary 1]{Suprunenko}), 
$N$ is conjugate to a subgroup of upper Borel subgroup of $\SL_n(\F_2)$. 
Let $N_i=[N_{i-1},N_{i-1}]$ be the derived series with $N_0=N$. Then, $N_n$ is trivial.  
Note that the action of $S_{n+1}$ on $N$ induces one of each $N_i$. 
Consider the action of $S_{n+1}$ on $\F_2$-vector space 
 $N_{n-1}=N_{n-1}/N_n$ of dimension strictly less than $n$. 
This action has to be trivial by \cite[Theorem 1.1]{Wagner}. 
The same argument as above shows $N_{n-1}$ has to be trivial. 
By induction on $n-i$, we see that $N$ is trivial.  
Hence we have $K_{f_t}=K_{\br_{t,2}}$.    

The last claim is follows from \cite[Theorem 1.1]{Wagner} again.

\section{Galois representations I:Basics}\label{basics}
\subsection{Notation}We refer to \cite[Section 1]{BGGT} or \cite[Section 2]{Th2}. 
Let $n\ge 1$ be  an integer and define $\cG_n:=(GL_n\times GL_1)\rtimes \{1,j\}$ with 
$j^2=1$. 
The semi-direct product is defined by 
$$j(g,\mu)=(\mu{}^t g^{-1},\mu)j.$$
This is a non-connected algebraic group scheme over $\Z$.  
Let $\cG^0_n=GL_n\times GL_1$ be the connected component of $\cG_n$.
There is a character $\nu:\cG_n\lra GL_1$ sending 
$(g,\mu)$ to $\mu$ and $j$ to $-1$. 
Let $\G$ be a group with a normal subgroup $\Delta$ of index 2. Let $A$ be a commutative ring.  
For each homomorphism $r:\G\lra \cG_n(A)$ with $\Delta=r^{-1}(\cG^0_n)$, 
we define $\breve{r}:\Delta\lra \GL_n(A)$ by the composition of 
$$\Delta\stackrel{r|_\Delta}{\lra}\GL_n(A)\times A^\times\lra \GL_n(A)$$ 
where the latter map is the natural projection. 

For each homomorphism $r:\G\lra \GSp_{2n}(A)$ with the multiplier $\mu:=\nu \circ r$, 
we define 
\begin{equation}\label{extension}
\hat{r}_\Delta:\G\lra \cG_{2n}(A),\ g\mapsto 
\left\{\begin{array}{ll}
(r(g),\mu(g)) & (g\in \Delta) \\
(r(g)J^{-1}_{2n},-\mu(g))j & (g\in \G\setminus\Delta)
\end{array}\right.
\end{equation}
Then, $\Delta=\hat{r}^{-1}_\Delta(\cG^0_n(A))$ and $\nu\circ \hat{r}_\Delta=\mu$. 
Further, $\breve{\hat{r}}_\Delta=r|_{\Delta}$.

\subsection{Terminology}
Let $\ell$ be a rational prime and fix an isomorphism $\iota:\bQ_\ell\stackrel{\sim}{\lra}\C$. 
Let $F$ be a CM field or a totally real field and $F^+$ its maximal totally real subfield 
so that $F = F^+$ if $F$ is totally real. For each infinite place $v$, we denote by $c_v$ 
a complex conjugation in $G_F$ corresponding to $v$ so that $c_v=1$ if $F$ is a CM field.  

\begin{dfn}\label{various} 
 A polarized $\ell$-adic representation of $G_F$ is a couple $(r,\mu)$ of 
continuous homomorphisms  $r:G_F\lra \GL_n(\bQ_\ell)$ and $\mu:G_{F^+}\lra \bQ^\times_\ell$ 
which satisfy the following conditions: 
for some $($any$)$ infinite place $v$ of $F$, there exist $\ve_v\in\{\pm \}$ and 
a perfect bilinear pairing $\langle \ast,\ast \rangle_v:\bQ^n_\ell\times\bQ^n_\ell\lra \bQ_\ell$ 
such that 
\begin{enumerate}
\item $\ve_v=-\mu(c_v)$ if $F$ is CM.  
\item $\langle x,y \rangle_v=\ve_v \langle y,x \rangle_v$,
\item $\langle r(g)x,r(c_v g c^{-1}_v)y \rangle_v=\mu(g) \langle x,y \rangle_v$ 
for any $g\in G_F$.
\end{enumerate}
A polarized mod $\ell$ representation $(\bar{r},\bar{\mu})$ consisting of 
continuous homomorphisms   
$\bar{r}:G_F\lra \GL_n(\bF_\ell)$ and $\bar{\mu}:G_F\lra\bF^\times_\ell$ is defined similarly. 

An $\ell$-adic representation $r:G_F\lra \GL_n(\bQ_\ell)$ is said to be polarized if 
there exists a continuous character $\mu:G_{F^+}\lra \bQ^\times_\ell$ such that $(r,\mu)$ 
is polarized in the above sense. 
It is similarly defined for a mod $\ell$ representation $\bar{r}:G_F\lra \GL_n(\bF_\ell)$. 
\end{dfn}

By \cite[Lemma 2.2]{Th2}, if $(\rho,\mu)$ with $\rho:G_F\lra \GL_n(\bQ_\ell)$ and $\mu:G_{F^+}\lra \bQ^\times_\ell$ is polarized, there exists 
$$r:G_{F^+}\lra \cG_n(\bQ_\ell)$$ such that $r|_{G_F}=\rho$, $\mu=\nu\circ r$ and 
$G_F=r^{-1}(\cG^0)$. 
We call $r$ an extension of $(\rho,\mu)$. A mod $\ell$ version is also defined similarly. 
For example, if $F=F^+$, then $\hat{r}:=\hat{r}_{G_F}$ defined in (\ref{extension}) is an extension of 
a continuous homomorphism $r:G_F\lra \GSp_{2n}(\bQ_\ell)$ with $\mu:=\nu\circ r$.

\begin{dfn}\label{odd} Keep the notation as above. 
\begin{enumerate}
\item A polarized $\ell$-adic  representation $(r,\mu)$ of $G_F$ 
is said to be odd at $v$ if $\ve_v=1$ for an infinite place $v$ of $F$. 
When $\ell>2$, the oddness for a polarized mod $\ell$ representation 
is similarly defined. 
\item  Assume $n$ is even. 
For an infinite place $v$ of $F$, a polarized mod $2$ representation $(\rho,\bar{\mu})$ said to be strongly residually odd at $v$ if $\bar{r}(c_v)$ is $\GL_n(\bF_2)$-conjugate to $(1_n,1)j$ for 
an extension $\bar{r}$ of $(\rho,\bar{\mu})$.   
Further, a polarized mod $2$ representation $(\br,\bar{\mu})$ said to 
be strongly residually totally odd if it is strongly residually odd at $v$ for any 
infinite place $v$ of $F$. If $\bar{r}$ arises from $\br:G_{F^+}\lra \GSp_{2n}(\bF_2)$ with 
$\bar{\mu}:=\nu\circ \bar{\rho}$, then the above condition is equivalent to 
that $\br(c_v)$ is alternating, hence the corresponding non-degenerate alternating pairing $B:\bF^{2n}_2\times \bF^{2n}_2\lra \bF_2$ satisfies $B(x,x)=0$ for any $x\in \bF^{2n}_2$  
$($cf. \cite[Section 5.1]{BCGP2}$)$.
\end{enumerate}
\end{dfn}

\begin{dfn}\label{regularity} 
\begin{enumerate}
\item An $\ell$-adic representation is algebraic if it is unramified at all but finitely many 
finite places of $F$ and it is de Rham at all primes of $F$ above $\ell$. 
\item  An algebraic $\ell$-adic representation $r:G_F\lra \GL_n(\bQ_\ell)$ is regular if 
the set ${\rm HT}_{\tau}$ consisting of the Hodge-Tate weights of 
$r|_{G_{F_v}}$ at each finite place $v$ above $\ell$ has 
$n$ distinct elements for any $\tau:F\hookrightarrow \bQ_\ell$.  
\end{enumerate}
\end{dfn}

For notation around local and global Galois representations, $p$-adic Hodge theory, and 
the definition of automorphy for regular algebraic Galois representations, we refer to \cite[Section 1 and Section 2]{BGGT}. 
In particular, the reader should not confuse ``$\sim$'' (which means the connectivity \cite[p.524 for $\ell$-adic cae 
and p.530 for $p$-adic case]{BGGT}) with the equivalence of the Galois representations.

\section{Galois representations II}\label{GRII}
\subsection{Potential automorphy} 
In this section, we will prove a potentially automorphy theorem for a family of 
Galois representations by using the Dwork family. This result is an extension of 
\cite[p.546, Theorem 3.1.2]{BGGT}. This plays an important role to switch the 
residual characteristic. 

\begin{thm}\label{pot-auto} Suppose that 
\begin{itemize}
\item $F/F_0$ is a finite, Galois extension of totally real fields;
\item $r\ge 1$ is an integer;
\item  $n_1\ge 4$ is even integer, $\ell_1=2$, and $\iota_1:\bQ_{\ell_1}\stackrel{\sim}{\lra}\C$;
\item if $r\ge 2$, for each $i$ with $2\le i\le r$,  $n_i$ is a positive even integer,  
$\ell_i$ is an odd rational prime, and $\iota_i:\bQ_{\ell_i}\stackrel{\sim}{\lra}\C$;
\item $F^{{\rm (avoid)}}/F$ is a finite Galois extension;
\item for $1\le i\le r$, $\bar{r}_i:G_F\lra \GSp_{n_i}(\bF_{\ell_i})$ is a mod 
$\ell_i$ Galois representation with $\nu\circ \bar{r}_i=\bar{\ve}^{1-n_i}_{\ell_i}$ 
$($In particular, $\bar{r}_i$ is totally odd if $i\ge 2)$;
\item For $i=1$,  $\bar{r}_1$ is strongly residually odd at some infinite place of $F$;
\item ${\rm Im}(\bar{r}_i)\cap \Sp_{n_i}(\bF_2)$ is contained in ${\rm MD}_{n_i}(k)$ for 
some finite extension $k/\F_2$. 
\end{itemize}
Then, there exist a finite totally real extension $F'/F$ and 
a regular algebraic, cuspidal, polarized automorphic representation $(\pi_i,\chi_i)$ of 
$\GL_{n_i}(\A_{F'})$ for each $1\le i\le r$ such that 
\begin{enumerate}
\item $F'/F_0$ is Galois;
\item $F'$ is linearly disjoint from $F^{{\rm (avoid)}}$ over $F$;
\item $(\bar{r}_{\ell_i,\iota_i}(\pi_i),\bar{r}_{\ell_i,\iota_i}(\chi_i)\bar{\ve}^{1-n_i}_{\ell_i})
\simeq (\bar{r}_i|_{G_{F'}},\bar{\ve}^{1-n_i}_{\ell_i})$ for each $1\le i\le r$ and $\iota_{i}:\bQ_{\ell_i}
\stackrel{\sim}{\lra}\C$;
\item $\pi_i$ is $\iota_i$-ordinary of weight 0 for each $1\le i\le r$ and $\iota_{i}:\bQ_{\ell_i}
\stackrel{\sim}{\lra}\C$. 
\end{enumerate} 
\end{thm}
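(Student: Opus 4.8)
The plan is to follow the strategy of \cite[Theorem 3.1.2]{BGGT} (the Harris--Shepherd-Barron--Taylor potential automorphy argument), but using the Dwork family $\{W_t\}$ in place of a single Dwork hypersurface, and taking care of the $\ell_1=2$ place via the connectedness results of Section \ref{connected}. First I would set up the moduli space: let $\frak n=\lambda\frak n'$ with $\lambda\mid 2$ corresponding to the mod $\ell_1$ situation and $\frak n'=\prod_{i\ge 2}\lambda_i$ corresponding (after fixing compatible lattices) to the residual representations $\bar r_i$ for $i\ge 2$, arranged so that no two prime factors of $\frak n'$ share a residue characteristic. The package $W=\bigoplus_i \bar r_i$ (suitably Tate-twisted to match the multiplier $\bar\ve^{1-n_i}$) then defines, by the functor described before Proposition \ref{conn}, a finite étale cover $\pi_W:T_W\lra T_0\times_{\Z[1/N]}F$ parametrizing symplectic trivializations identifying the local system of the Dwork family with $W$. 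The hypothesis that $\mathrm{Im}(\bar r_i)\cap\Sp_{n_i}(\bF_2)\subseteq \mathrm{MD}_{n_i}(k)$ is exactly what Proposition \ref{conn} needs to conclude that $T_W$ is geometrically connected; this is the crucial input that makes Hilbert irreducibility / Moret-Bailly available.

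Next I would invoke a Moret-Bailly–type theorem (as in \cite[\S3]{BGGT}, or \cite{HSBT}) to produce a totally real point $t\in T_W(F')$ for a suitable finite totally real extension $F'/F$ that can be chosen (i) Galois over $F_0$ and (ii) linearly disjoint from $F^{(\mathrm{avoid})}$ over $F$, and moreover so that the local behaviour at the places above $\ell_i$ and at the archimedean places is as prescribed — in particular forcing $t$ to lie in the neighbourhood $\widetilde\Omega_v$ of $t=0$ from Lemma \ref{infinite}(2) at the place where $\bar r_1$ is strongly residually odd, and forcing the fibres over the real places to consist of totally real points. By construction of $T_W$, for such $t$ we get $V_{\lambda,t}/\lambda\cong \bar r_1|_{G_{F'}}$ (up to the Tate twist) and $V_{\lambda_i,t}/\lambda_i\cong \bar r_i|_{G_{F'}}$ for $i\ge2$, with matching symplectic/polarization data; the oddness bookkeeping at $\ell_1=2$ is handled by Definition \ref{odd}(2) and Lemma \ref{infinite}(2), and for $i\ge2$ by Lemma \ref{infinite}(1). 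Then Lemma \ref{p=l} gives that each $V_{\lambda_i,t}$ is de Rham with Hodge--Tate weights $\{0,1,\dots,n_i-1\}$, and crystalline (resp. potentially ordinary) at places above $\ell_i$ when $t$ is chosen an $\ell_i$-adic unit (resp. $\ord_v t<0$) — so arranging $t$ to be $\ell_i$-adically close to $0$ makes everything crystalline, hence (after enlarging $F'$) ordinary of weight $0$ in the sense needed for conclusion (4).

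The automorphy of the Dwork-family motive itself is supplied by \cite[Theorem B]{HSBT} (potential automorphy of the weakly compatible system $(V_{\lambda',t})_{\lambda'}$), exactly as used already in the proof of Lemma \ref{p=l}: after a further finite totally real base change — which I would fold into $F'$ from the start, keeping track of the Galois-over-$F_0$ and linear-disjointness conditions — there is a regular algebraic self-dual cuspidal automorphic representation $\pi_i$ of $\GL_{n_i}(\A_{F'})$ with $r_{\ell_i,\iota_i}(\pi_i)\cong V_{\lambda_i,t}|_{G_{F'}}$, whence the residual isomorphism in (3). Polarization $\chi_i$ with $r_{\ell_i,\iota_i}(\chi_i)=\ve^{1-n_i}_{\ell_i}\nu$ comes from the symplectic pairing (Poincaré duality), and $\iota_i$-ordinarity of weight $0$ in (4) follows from the crystalline-ordinary conclusion of Lemma \ref{p=l} together with Geraghty's ordinary local-global compatibility (or the classicality statements of \cite{Ge1}). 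Finally I would record that all the ``level potentially prime to $\ell_i$'' flexibility is automatic here since we control $t$ $\ell_i$-adically.

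The main obstacle I expect is the simultaneous optimization of the point $t$: one needs a single $t\in T_W(F')$ over a single $F'$ satisfying, all at once, the geometric conditions (Galois over $F_0$, disjoint from $F^{(\mathrm{avoid})}$), the archimedean conditions (totally real, and lying in $\widetilde\Omega_v$ at the strongly-odd place — this is where $\ell_1=2$ oddness really bites, since unlike the odd case there is no cheap cohomological oddness and one genuinely needs the specialization neighbourhood of Lemma \ref{infinite}), and the $\ell_i$-adic conditions for each $i$ to land in the crystalline-ordinary regime; the Moret-Bailly machine does deliver this, but verifying that the local conditions at the $2$-adic place are open and nonempty in $T_W(F'_v)$ — i.e. that $t=0$ (or a nearby totally real point) actually lifts to $T_W$ with the right residual image, which is where geometric connectedness from Proposition \ref{conn} and the analysis of $V_{\lambda,0}$ in Lemma \ref{infinite} must be combined carefully — is the delicate part.
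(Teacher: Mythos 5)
Your outline reproduces the moduli-space and Moret--Bailly skeleton of the paper's proof, but it is missing the mechanism that actually produces the automorphic representation $\pi_i$ over the controlled field $F'$. You propose to get automorphy by citing \cite[Theorem B]{HSBT}, "folding the further base change into $F'$ from the start." This does not work: the totally real extension produced by that theorem arises inside its own proof, depends on the specific fiber $t$ that your Moret--Bailly step only delivers at the end, and is not guaranteed to remain linearly disjoint from $F^{{\rm (avoid)}}$ or Galois over $F_0$; so you would only obtain potential automorphy over an uncontrolled extension, which is strictly weaker than conclusions (1)--(3). The paper's proof closes this gap by enlarging the moduli problem: for each $i$ it constructs an auxiliary residual representation $\bar{r}'_i$ at a third, odd, auxiliary prime $\ell'$, induced from an algebraic character of a cyclic CM field $M_i$ of degree $n_i$ and arranged to have adequate image over $F'(\zeta_{\ell'})$, and takes $T=\prod_i T_{\bar r_i\times \bar r'_i}$. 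The Moret--Bailly point then matches the Dwork fiber mod $\ell'$ with $\bar r'_i|_{G_{F'}}$; Arthur--Clozel automorphic induction \cite{AC} makes $r'_i|_{G_{F'}}$ automorphic, and the odd-prime automorphy lifting theorem \cite[Theorem 2.4.1]{BGGT} (applicable because $\ord_v(t_i(P))>0$ at $v\mid \ell'$ and $\ell'\equiv 1\bmod N$ give good ordinary reduction there) transfers automorphy to $V_{\lambda',t_i(P)}|_{G_{F'}}$ and hence to the whole compatible system over $F'$ itself. This third-prime bridge is the essential idea your proposal omits; note that it must be run at an odd prime precisely because no $2$-adic lifting theorem is yet available at this stage of the paper.

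A second, related error concerns the local condition at $v\mid \ell_i$ and conclusion (4). You propose to take $t$ "$\ell_i$-adically close to $0$" so that $V_{\lambda_i,t}$ is crystalline at $v\mid \ell_i$, and assert that this yields $\iota_i$-ordinarity of weight $0$. Crystalline does not imply ordinary, and for $\ell_1=2$ in particular there is no reason a crystalline Dwork fiber should be ordinary. The paper imposes the opposite condition $\ord_v(t_i(P))<0$ at $v\mid\ell_i$, so that by Lemma \ref{pnotl} and Lemma \ref{p=l}(3) the local representation is an unramified twist of Steinberg; $\pi_{i,v}$ is then a twisted Steinberg representation and $\pi_i$ is $\iota_i$-ordinary. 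The crystalline/good-ordinary regime is reserved for the auxiliary prime $\ell'$, where the congruence $\ell'\equiv 1\bmod N$ is what forces ordinarity.
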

\begin{proof}The proof here follows the proof of \cite[Theorem 3.1.2]{BGGT}. 
We may assume that ${\rm Im}(\bar{r}_i)\subset \GSp_{n_i}(\F^{(i)})$ 
for some finite extension $\F^{(i)}/\F_{\ell_i}$. By assumption, we may put $\F^{(1)}=k$. 

We choose a positive odd integer $N$ such that 
\begin{itemize}
\item $N$ is coprime to $\ds\prod_{i=1}^r \ell_i$;
\item $N>n_i+1$ for any $1\le i\le r$;
\item $N$ is not divisible by any rational prime which ramifies in $F^{{\rm (avoid)}}$;
\item for $i=1$, there exists a non-zero prime $\lambda_1$ of $\Z[\frac{1}{N},\zeta_N]^+$ 
above 2 such that $k_{\la_1}$ contains $k$. 
\item if $r\ge 2$, for each $2\le i\le r$,  
there exists a non-zero prime $\lambda_i$ of $\Z[\frac{1}{N},\zeta_N]^+$ 
above $\ell_i$ such that $k_{\la_i}$ contains $\F^{(i)}$. 
\end{itemize}
The third condition shows $\Q(\zeta_N)$ is linearly disjoint from $F^{{\rm (avoid)}}$ over $\Q$. 
Next, for each $1\le i\le r$, we choose a CM field $M_i$ such that 
$M_i/\Q$ is cyclic of order $n_i$ and unramified at all primes which ramify in $F^{{\rm (avoid)}}$. 
Let $\tau_i$ denote a generator of ${\rm Gal}(M_i/\Q)$. 
We choose a rational prime $q$ such that 
\begin{itemize}
\item $q$ splits completely in $\ds\prod_{i=1}^r M_i$; 
\item $q$ is unramified in $F(\zeta_{4N})$. 
\end{itemize} 
Choose a finite field $M'$ containing $\ds\prod_{i=1}^r M_i$. 
Choose a rational prime $\ell'$ such that 
 \begin{itemize}
\item $\ell'$ splits completely in $M'(\zeta_N)$ (In particular, $\ell'\equiv 1$ mod $N$);
\item $\ell'$ is unramified in $F$ (In particular, $\zeta_{\ell'}\not\in F$);
\item $\ell'\nmid 3qN\ds\prod_{i=1}^r\ell_i n_i$.
\end{itemize}
Choose a prime $\lambda'_{M'}$ of $M'$ above $\ell'$. 
Then, by the argument for the proof of \cite[Theorem 3.1.2]{BGGT}, we can choose 
an algebraic character $\theta_i:G_{M_i}\lra O^\times_{M',\lambda'_{M'}}=\Z^\times_{\ell'}$ 
such that $r'_i:={\rm Ind}^{G_\Q}_{G_{M_i}}\theta_i$ factors through 
$\GSp_{n_i}(\Z_{\ell'})$ with the multiplier $\ve^{1-n_i}_{\ell'}$. 
Further, it satisfies the following properties:
\begin{itemize}
\item $\bar{r}'_i$ is irreducible;
\item $\bar{r}'_i(G_{\Q(\zeta_{l'})})$ is  adequate;
\item  $\bar{r}'_i(G_{\Q(\zeta_{l'})})=\bar{r}'_i(F_{\Q(\zeta_{Nl'})})$. 
\end{itemize} 
Applying Section \ref{connected} for $n=n_i$ $(1\le i\le N)$, we have a finite etale covering 
$T_{\bar{r}_i\times \bar{r}_i}$ over $T_0/F(\zeta_N)^+$ for each $1\le i\le r$. 
Put $T:=\ds\prod_{i=1}^r T_{\bar{r}_i\times \bar{r}_i}$ and for each point $P$ of $T$, 
we denote by $t_i(P)$ the underlying point in $T_0/F(\zeta_N)^+$ of the $i$-th 
component of $P$. By assumption and Proposition \ref{conn}, $T$ is geometrically connected. 
For $1\le i\le r$ and each place $v$ of $F(\zeta_N)^+$ dividing $\infty \ell_i \ell'$, we define 
$\Omega_{v,i}$ as follows. 
If $v$ is an infinite place, then we define $\Omega_{v}\subset T(F(\zeta_N)^+_v)
\simeq T_0(\R)$ to be the inverse image of 
$\widetilde{\Omega}_v$ in Lemma \ref{infinite}-(2) under the map 
$T\stackrel{{\rm pr}_1}{\lra} T_1\lra T_0$. 
By \cite[Lemma 2.16-(2)]{Th2}, Lemma \ref{connected}-(1), (2), and the argument for \cite[p.549]{BGGT},  $\Omega_{v}$ is non-empty.  
For each $v|\ell_i$ $(1\le i\le r)$, we choose 
$\Omega_v:=\{P\in T(F(\zeta_N)^+_v)\ |\ \ord_v(t_i(P))<0\}$ while 
for each $v|\ell'$, 
$\Omega_v:=\{P\in T(F(\zeta_N)^+_v)\ |\ \ord_v(t_i(P))>0\,\ 1\le\forall i\le r\}$. 
Enlarging $F$ by a finite totally real extension if necessary, we may assume these open sets are non-empty. 
Applying Moret-Bailly's theorem \cite[Proposition 3.1.1]{BGGT}, 
there exist a finite totally real extension $F'/F(\zeta_N)^+$ and $P\in T(F')$ such that 
\begin{itemize}
\item $F'/F_0$ is a totally real Galois extension; 
\item for each finite place $v|\ell_i$ $(1\le i\le r)$ of $F'$, $\ord_v(t_i(P))<0$;
\item for each finite place $v|\ell'$  of $F'$, $\ord_v(t_i(P))>0$ for each $1\le i\le r$;
\item $F'$ is linearly disjoint from $F^{{\rm (avoid)}}$ over $F$;
\item for each $1\le i\le r$, $(V_{\la_i,t_i(P)}/\la_i)(\frac{N-1-n_i}{2})\simeq \bar{r}_i|_{G_F'}$ for some non-zero prime ideal $\la_i$ of $\O_F'$ above $\ell_i$;
\item for each $1\le i\le r$, $(V_{\la',t_i(P)}/\la_i)(\frac{N-1-n_i}{2})\simeq \bar{r}'_i|_{G_F'}$ for some non-zero prime ideal $\la'$ of $\O_F'$ above $\ell'$;
\end{itemize}
Summing up, we have obtained the following:
\begin{itemize}
\item $F'$ is linearly disjoint from $F^{{\rm (avoid)}}$ over $F$;
\item for each $1\le i\le r$, $(V_{n_i,\la_i,t_i(P)}/\la_i)(\frac{N-1-n_i}{2})\simeq \bar{r}_i|_{G_F'}$ for some non-zero prime ideal $\la_i$ of $\O_F'$ above $\ell_i$;
\item for each $1\le i\le r$, $(V_{n_i,\la',t_i(P)}/\la_i)(\frac{N-1-n_i}{2})\simeq \bar{r}'_i|_{G_F'}$ for some non-zero prime ideal $\la'$ of $\O_F'$ above $\ell'$;
\item for each $1\le i\le r$, $\bar{r}'_i(G_{F(\zeta_{\ell'})})$ is adequate;
\item $\zeta_{\ell'}\not\in F'$;
\item for each $1\le i\le r$, $V_{n_i,\la',t_i(P)}(\frac{N-1-n_i}{2})$ is 
good ordinary reduction at all primes above $\ell'$ 
(it follows from \cite[Lemma 5.3-(3)]{BGHT} since $\ell'\equiv 1$ mod $N$);
\item  for each $1\le i\le r$, 
$HT_\tau(V_{n_i,\la',t_i(P)}(\frac{N-1-n_i}{2}))=\{0,1,\ldots,n_i-1\}$ for any 
$\tau:F'\hookrightarrow \bQ_{\ell'}$;
\item for each $1\le i\le r$ and each finite place $v|\ell_i$ of $F'$, 
$$\iota {\rm WD}(V_{n_i,\la',t_i(P)}(\frac{N-1-n_i}{2})|G_{F'_v})\simeq {\rm rec}_{F'_v}
(\Sp_{n_i}(\phi_i))$$
for some unramified character$\phi$ of $F'^\times_v$ and for any 
isomorphism $\iota:\bQ_{\ell'}\stackrel{\sim}{\lra}\C$ 
(this fact follows from Lemma \ref{pnotl}-(2)). 
\end{itemize}
Now the argument goes as in \cite[p.550]{BGGT}, the $\ell'$-adic representation 
$(r'_i|_{G_{F'}},\ve^{1-n_i}_{\ell'})$ is automorphic of level potentially prime to $\ell'$ 
by Arthur-Clozel's automorphic induction. Here we use the fact that 
any $\ell'$-adic character which is de Rham at any finite place above $\ell'$ is 
also potentially crystalline at any finite place above $\ell'$. 
Since $\ell'$ split completely in $M'$, $r'|_{G_{F'_v}}$ is decomposed into the 
direct summand of 1-dimensional 
spaces. Thus, it is ordinary for any $v|\ell'$. 
By construction, $V_{n_i,\la',t_i(P)}(\frac{N-1-n_i}{2})|G_{F'}$ has 
the reduction $\bar{r}'_i|_{G_{F'}}$ is ordinary at any finite place above $\ell'$. 
Applying Theorem \cite[2.4.1]{BGGT}, 
$V_{n_i,\la',t_i(P)}(\frac{N-1-n_i}{2})|G_{F'}$ is $\iota$-ordinary automorphic. 
Let $\pi_i$ be the corresponding cuspidal representation of $\GL_{n_i}(\A_{F'})$. 
By looking at the Hodge-Tate weights, $\pi_i$ is of weight 0. Further, 
for each $1\le i\le r$ and each finite place $v|\ell_i$ of $F'$, $\pi_{i,v}$ is a unramified 
twost of the Steinberg representation. In particularm $\pi_i$ is $\iota_i$-ordinary 
for each $\iota_{i}:\bQ_{\ell'}\stackrel{\sim}{\lra}\C$. 
Thus, $r_{\ell_i,\iota_i}(\pi_i)$ is an $\iota_i$-ordinary automorphic lift of $\bar{r}_i|_{G_F'}$ 
as desired.  
\end{proof}

\begin{rmk}\label{OFA} 
Keep the notation in Theorem \ref{pot-auto}. 
Pick $t_0\in T_0(F)$. For each infinite place $v$ of $F$, fix a conjugacy class $C_{t_0,v}$ of the image of 
the complex conjugation $c_v$ under the Galois representation associated to $V_{\la,t_0}/\la$. 
Then, one can find a non-empty subgroup $\Omega_v(C_{t_0},{v})$ containing $t_0$ of $T_0(F_v)=T(\R)$ such that 
the image of the complex conjugation $c_v$ corresponding to $v$ under the Galois representation attached to $V_{\la,t}/\la$ belongs to $C_{t_0,v}$ for any 
$t\in \Omega_v(C_{t_0,v})$.  
By using these open sets, a proof of Theorem \ref{pot-auto} goes through verbatim when  
we may replace the seventh condition therein on the mod 2 Galois representation 
$\bar{r}_1:G_F\lra \GSp_{n_1}(\bF_2)$ with the following condition:
\begin{itemize}
\item for some $t_0\in T_0(F)$, $\bar{r}_1(c_v)$ belongs to $C_{t_0,v}$ for each infinite place $v$ of $F$.    
\end{itemize}  
Thus, eventually, we can remove the seventh condition in  Theorem \ref{pot-auto}. 
However, most important case is when the mod 2 Galois representations are strongly residually odd 
at some infinite place. To avoid any confusion, we do not restate it here.  
\end{rmk}

\subsection{A Khare-Wintenberger' type lifting theorem}
The following theorem is an analogue of Khare-Wintenberger' lifting theorem 
(\cite[Theorem 5.1]{KW1} and \cite[Section 8]{KW2}) for  
prescribed ramification data. 
In \cite[Section 3.2]{BGGT}, they handled for any dimension but for even residual characteristic case, the tensor trick does not work since the induced representation is not (weakly) adequate.  
However, in this paper, we only focus on even dimensional Galois representation and thus, we can 
detour the all arguments involving induced representations.   

Let $F$ be a CM field with the maximal totally real subfield $F^+$. 
Let $S$ be a finite set of finite places of $F^+$ which split in $F$ and 
suppose $S$ includes all places above $2$. For each $v\in S$, we choose 
a finite place $\tilde{v}$ of $F$ above $v$. Let $\widetilde{S}:=\{\tilde{v}\ |\ v\in S\}$.  
Let $$\br:G_{F^+}\lra \GSp_{2n}(\F_2)$$ be a continuous representation. 
Let $$\bar{r}:G_{F^+}\lra \cG_{2n}(\bF_2)$$ be the extension of $\br$ given by (\ref{extension}). 
Then, $\breve{\bar{r}}=\br|_{G_F}$ by definition. 
Let $\mu:G_{F^+}\lra \bQ^\times_2$ be a continuous character unramified outside $S$ 
such that $\mu(c_v)=-1$ for any complex conjugation $c_v$ and it is crystalline 
at any finite place above 2. 
By the results in \cite[p.600, A.2]{BGGT}, there is an integer $w\in \Z$ such that  
${\rm HT}_\tau(\mu)=\{w\}$ for each $\tau:F^+\hookrightarrow \bQ_2$. 
For each $\tau:F\hookrightarrow \bQ_2$,  
let ${\rm H}_\tau$ be a set of $n$-distinct integers such that ${\rm H}_{\tau\circ c}
=\{w-h\ |\ h\in {\rm H}_\tau\}$ for any complex conjugation $c$. 
For each $v\in S$ with $v\nmid 2$, let $\rho_v:G_{F_{\tilde{v}}}\lra \GL_{2n}(\O_{\bQ_2})$ 
be a lift of $\breve{\bar{r}}|_{G_{F_{\tilde{v}}}}=\br|_{G_{F_{\tilde{v}}}}$. 

\begin{thm}\label{KWtype}Keep the notation and the assumption being as above.
Aslo suppose the following conditions:
\begin{enumerate}
\item $\breve{\bar{r}}$ is nearly adequate;
\item $\rho$ is strongly residually odd at some infinite place of $F^+$; 
\item ${\rm Im}(\br)\cap \Sp_{2n}(\bF_2)$ is contained in $\MD_{n_i}(k)$ for some 
finite extension $k/\F_2$;  
\item $2n[F^+:\Q]\equiv 0$ mod 4;
\item For each finite place $u|2$ of $F$, the restriction 
$\breve{\bar{r}}|_{G_{F_u}}=\br|_{G_{F_u}}$ admits a lift $\rho_u:G_{F_u}\lra \GL_{2n}(\O_{\bQ_2})$ 
which is ordinary and crystalline with Hodge-Tate numbers  ${\rm H}_\tau$ for each 
$\tau:F_u\hookrightarrow \bQ_2$. 
\end{enumerate}
Then there is a lift $$r:G_{F^+}\lra \cG_{2n}(\O_{\bQ_2})$$
of $\bar{r}$ such that 
\begin{enumerate}
\item $\nu\circ r=\mu$;
\item if $u|2$, then $\breve{r}|_{G_{F_u}}$ is ordinary and crystalline with Hodge-Tate numbers  ${\rm H}_\tau$ for each 
$\tau:F_u\hookrightarrow \bQ_2$;
\item if $v\in S$ with $v\nmid 2$, then $\breve{r}|_{G_{F_{\tilde{v}}}}\sim \rho_v$;
\item $r$ is unramified outside $S$. 
\end{enumerate}
\end{thm}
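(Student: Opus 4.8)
The plan is to realize the required $r$ as a $\bQ_2$-point of an appropriate global deformation ring of $\bar r$, and to show that this ring has such a point by combining the potential automorphy of Theorem \ref{pot-auto} with a nearly adequate, ordinary refinement of the $2$-adic automorphy lifting method of \cite{Th2}, together with the Khare--Wintenberger descent as in \cite{KW1}, \cite{KW2} and \cite[\S3.2]{BGGT}.

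First I would set up the global deformation problem $\mathcal S$ for $\bar r$: liftings $G_{F^+}\lra\cG_{2n}$ with multiplier $\mu$, unramified outside $S$, satisfying the ordinary crystalline condition with Hodge--Tate numbers ${\rm H}_\tau$ at each place $u\mid 2$ (a non-empty condition by hypothesis (5)) and the ``$\sim\rho_v$'' condition of \cite[\S1.2]{BGGT} at each $v\in S$ with $v\nmid 2$ (non-empty since $\rho_v$ lifts $\breve{\bar r}|_{G_{F_{\tilde v}}}$). Since $\bar r$ itself satisfies all of these at the residual level, the universal deformation ring $R_{\mathcal S}$ is non-zero, and the theorem reduces to producing a $\bQ_2$-point of $R_{\mathcal S}$.

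Next I would establish potential automorphy of $\breve{\bar r}$. After twisting $\br$ by a character if necessary so that $\nu\circ\br=\bar{\ve}^{1-2n}_2$, I apply Theorem \ref{pot-auto} with $r=1$, $n_1=2n$, $\ell_1=2$, its ``$F$'' equal to our $F^+$, and its ``$F^{{\rm (avoid)}}$'' chosen large enough that the field $F'$ it produces is linearly disjoint over $F^+$ from $\oF^{\ker\br}$ (so that near adequacy of the restricted image is preserved) and from the fixed fields of $\mu$ and of the $\rho_v$'s. Hypotheses (1)--(3) are exactly the inputs needed: near adequacy, a strongly residually odd infinite place, and the inclusion ${\rm Im}(\br)\cap\Sp_{2n}(\bF_2)\subset\MD_{2n}(k)$ --- the last being what makes the relevant Dwork covering geometrically connected via Proposition \ref{conn}, so that Moret--Bailly applies. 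This yields a totally real $F'/F^+$, Galois over $F^+$, and a regular algebraic cuspidal polarized automorphic representation $\pi_1$ of $\GL_{2n}(\A_{F'})$ that is $\iota_2$-ordinary of weight $0$ with $\bar r_{2,\iota}(\pi_1)\simeq\br|_{G_{F'}}$. Passing to the CM field $F'':=F'F$ by solvable (degree $2$) base change, $\breve{\bar r}|_{G_{F''}}$ is automorphic; thus $\breve{\bar r}$ is potentially automorphic, witnessed by an ordinary automorphic representation of weight $0$.

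Finally I would feed this into the $2$-adic automorphy lifting machinery over $F^+$. The nearly adequate modification of \cite{Th2} (in the sense of \cite{BCGP2}) applies since $\breve{\bar r}$ is nearly adequate (1), strongly residually odd at an infinite place (2), and $2n[F^+:\Q]\equiv 0\bmod 4$ (4); Geraghty's ordinary Taylor--Wiles--Kisin patching treats the ordinary crystalline condition at the places above $2$, with the weight propagated from $0$ to ${\rm H}_\tau$ and the multiplier to $\mu$ through the Hida family, while Taylor's Ihara avoidance ensures the prescribed local components $\rho_v$ at $v\in S$, $v\nmid 2$, lie in the support of the patched module. Via the Khare--Wintenberger argument --- $R_{\mathcal S}$ is module-finite over its base change to $F'$, which is a finite $\O_{\bQ_2}$-algebra of Krull dimension $\ge 1$ by the automorphy lifting conclusion over $F'$ --- one deduces that $R_{\mathcal S}$ is itself a finite $\O_{\bQ_2}$-algebra with a $\bQ_2$-point, and any such point gives the desired lift $r:G_{F^+}\lra\cG_{2n}(\O_{\bQ_2})$ with $\nu\circ r=\mu$, ordinary crystalline of Hodge--Tate numbers ${\rm H}_\tau$ above $2$, with $\breve r|_{G_{F_{\tilde v}}}\sim\rho_v$ at the remaining $v\in S$, and unramified outside $S$. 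I expect the main obstacle to be this last step: adapting Thorne's $2$-adic patching argument --- where much of the usual vanishing of local and global Galois cohomology breaks down --- to the nearly adequate setting, while simultaneously incorporating Geraghty's ordinary (Hida-theoretic) patching at $2$ and Ihara avoidance away from $2$, and carrying out the Khare--Wintenberger descent. The one genuine simplification over \cite[\S3.2]{BGGT} is that, being in even dimension throughout, we never need the induced-representation/tensor trick used there (for which adequacy of an induced representation would fail), so the argument runs directly with $\bar r$.
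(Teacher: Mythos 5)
Your proposal is correct and follows essentially the same route as the paper: potential ordinary automorphy of weight $0$ via Theorem \ref{pot-auto} over a suitable totally real extension, finiteness of the universal deformation ring there via the nearly adequate $2$-adic lifting machinery (the paper invokes \cite[Theorem 5.7.14]{BCGP2}), descent of finiteness to $R_{\mathcal S}$ over $F^+$ by \cite[Lemma 1.2.3]{BGGT}, and the lower bound on the dimension of its components to produce a $\bQ_2$-point. The only cosmetic difference is that the paper phrases the Khare--Wintenberger step as finiteness over the ordinary weight algebra $\Lambda$ followed by specialization at the weight ${\rm H}_\tau$, rather than finiteness over $\O_{\bQ_2}$ directly.
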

\begin{proof}We mimic the latter part of the proof of \cite[Proposition 3.2.1]{BGGT}. 
Let $F_0/F^+$ be a totally imaginary extension linearly disjoint from 
$\overline{F}^{{\rm Ker}(\br)}$. 
By Theorem \ref{pot-auto} (for $F^{{\rm(aboid)}}:=\overline{F}^{{\rm Ker}(\br)}F_0$), there exist a Galois totally real field $F^+_1/F^+$ 
and a regular algebraic cuspidal polarized automorphic representation $(\pi_1,1)$ of 
$\GL_{2n}(\A_{F^+_1})$ such that 
\begin{itemize}
\item $F^+_1$ is linearly disjoint from $\overline{F}^{{\rm Ker}(\br)}$;
\item $\bar{r}_{2,\iota}(\pi_1)\simeq \rho|_{G_{F_1}}$ for each $\iota:\bQ_{2}\stackrel{\sim}{\lra}\C$;
\item $\pi_1$ is $\iota$-ordinary of weight 0 for each $\iota:\bQ_{2}\stackrel{\sim}{\lra}\C$. 
\end{itemize}
Put $F_1=F_0F^+_1$. The first property shows $\br(G_{F_1})$ is nearly adequate. 

Let $T'\supset S$ be a finite set of finite places of $F^+$ including all those above which 
$\pi_1$ and $F_1$ is ramified. Take a solvable extension $F^+_2/F^+$, linearly disjoint from 
$\overline{F}^{{\rm Ker}(\br|_{G_{F_1}})}_1$ such that all primes of $F^+_3:=F^+_1F^+_2$ above each element of $T'$ split in $F_3:=F_1F^+_2$. In particular, $F_3/F^+_3$ is unramified everywhere 
at finite places. Thus, we have 
$$\bar{r}_3:G_{F^+_3}\lra \cG_{2n}(\bF_2)$$
such that 
\begin{enumerate}
\item $\bar{r}_3(G_{F_3})=\br(G_{F_3})$ is nearly adequate;
\item $F_3/F^+_3$ is everywhere unramified;
\item  all places above 2 of $F^+_3$ and 
ramified places of the solvable base change $BC_{F_3/F_1}(\pi_1)$ split in $F_3$;
\item $\br|_{G_{F^+_3}}$ is strongly residually odd at some infinite place of $F^+_3$.   
\end{enumerate} 

For each $v\in S$ with $v\nmid 2$, let $\mathcal C_v$ be a component of 
$R^\Box_{\br|_{G_{F_{\tilde{v}}}}}\otimes\bQ_2$ containing $\rho_v$. 
Consider  $\O=\O_E$ with the residue field $\F$ for a finite extension $E/\Q_2$ in $\bQ_2$ such that $E$ contains the image of each embedding $F_3\hookrightarrow \bQ_2$, 
$\mathcal C_v$ is defined over $E$ for each $v\in S$, 
${\rm Im}(\br)\subset \GSp_{2n}(\F)$, and ${\rm Im}(\mu)\subset \O^\times$.  

For each $v\in S$ with $v\nmid 2$, let $\mathcal D_v$ be the deformation problem 
for $\br|_{G_{F_{\tilde{v}}}}$ corresponding to $\mathcal C_v$. 
For $u\in S$ with $u|2$, let $\mathcal D_v$ be the deformation problem 
consisting of all lifts of $\br|_{G_{F_u}}$ which factor through 
the ordinary deformation ring
$R^\Box_{\O,\br|_{G_{F_u}},\{{\rm H}_\tau\}_\tau,{\rm ord}}$ 
with the prescribed Hodge-Tate numbers (see \cite[Section 1.4]{BGGT} or \cite[Section 5.6]{BCGP2}). 
Let 
$$\mathcal S=(F/F^+,S,\widetilde{S},\{\Lambda_v\}_{v\in S},\bar{r},\mu,\{D_v\}_{v\in S})$$
where $\Lambda_v=\O$ if $v\nmid 2$ and $\Lambda_v=\Lambda_{\GL_{2n},v}$ 
defined in \cite[Section 5.6.5]{BCGP2}. 
We denote by $R^{{\rm univ}}_{\mathcal S}$ the universal deformation ring for 
the deformation datum $\mathcal{S}$. 

For each $v\in T$, we choose 
a finite place $\tilde{v}$ of $F_3$ above $v$. Let $\widetilde{T}:=\{\tilde{v}\ |\ v\in T\}$. 
For each $v\in T$ with $v\nmid 2$, let $\mathcal D_v={\rm Lift}^\Box_v$ for 
$\br|_{G_{F_{3,\tilde{v}}}}$. 
For each $v\in T$ with $v|2$, let $\mathcal D_v^\Delta$ be the 
deformation problem defined in \cite[Proposition 5.6]{BCGP2} 
for $\br|_{G_{F_{3,\tilde{v}}}}$. 
Put $T_2:=\{v\in T\ |\ v|2\}$. 
Let 
$$\mathcal S_3=(F_3,\bar{r},\O,\ve^{1-2n}_2, T,
\{\Lambda_v\}_{v\in S},\{D^\Delta_v\}_{v\in T_2}\cup \{D_v\}_{v\in T\setminus T_2})$$
be a global deformation problem defined in \cite[p.133]{BCGP2} (set $R=\emptyset$ therein)  
and denote by $R^{{\rm univ}}_{\mathcal S_3}$ its universal deformation ring. 
Applying \cite[Theorem 5.7.14]{BCGP2} for $\bar{r}_3$, we see that $R^{{\rm univ}}_{\mathcal S_3}$ is a finite 
$\Lambda=\otimes_{v\in T_2}\Lambda_v$-algebra. 
By \cite[Lemma 1.2.3]{BGGT}, the ring $R^{{\rm univ}}_{\mathcal S}$ is also 
a finite generated $\Lambda$-algebra. 
By a standard argument in \cite[Proposition 2.2.9 and Corollary 2.3.5]{CHT} 
and \cite[Proposition 5.5.6]{BCGP2} which is an extension of \cite[Proposition 2.21]{Th2}, 
every irreducible component of $R^{{\rm univ}}_{\mathcal S}$ has dimension 
at least ${\rm dim}_\O \Lambda=1+2n[F:\Q]$ (see also \cite[Lemma 5.7.7]{BCGP2}). 
Thus, there is a continuous ring homomorphism $R^{{\rm univ}}_{\mathcal S}\lra \bQ_2$ 
and it gives rise to a desired lift. 
\end{proof}

\section{Applications of \cite{TY} and \cite{BCGP2}}\label{App}
\subsection{The primitive part}
We shall apply the main theorems in \cite{TY} and \cite{BCGP2} to the Dwork quintic family. 
In this case, 
the residual automorphy of the primitive part 
has been studied in considerable detail in \cite{TY} and an ordinary automorphy lifting theorem 
has been established in \cite{BCGP2}. 
Assume $N=5$ and $n=4$ for the notation in Section \ref{DFII}. 
Let $F^+$ be a totally real field. For each $t\in F^+\setminus\{1\}$ with $t^5-1\neq 0$,    
let $\rho_{t,2}:G_{F^+}\lra \GSp_4(\Q_2)$ be the 2-adic Galois representation 
associated to $V_{t,2}=H^3_{\text{\'et}}(W_{t,\overline{F^+}},\Q_2)$ (recall (\ref{vt2})). 
Let $f_t(x)=4x^5-5tx^4+1$ be the trinomial defined in (\ref{tri}) with the decomposition field 
$F^+_{f_t}$ over $F^+$.  
By \cite[Theorem 1.1]{TY}, $\br_{t,2}\simeq {\rm Gal}(F^+_{f_t}/F^+)$ if $f_t$ is irreducible over 
$F^+$. 
As observed in \cite[Section 7,1]{TY} with the argument in \cite[Remark 5.2.8]{BCGP2},  
$\br_{t,2}$ is strongly residually totally odd if $\iota(t)<1$ for any $\iota:F^+\hookrightarrow \R$.  

To apply \cite[Theorem 5.7.14]{BCGP2}, we also need 
the following result (the third case is enough for our purpose):
\begin{prop}\label{PD}Keep the notation being as above. Assume $f_t$ is irreducible over $F^+$. 
 Let $v$ be a finite place of $F^+$ above 2. 
Then, the local Galois representation $\rho_{t,2}|_{G_{F^+_v}}$ satisfies the following properties:
\begin{enumerate}
\item If $F^+_v\simeq \Q_2$ and $\ord_v(t)=1$, then $\rho_{t,2}|_{G_{F^+_v}}$ is reducible and it is a non-trivial 
extension of two 2-dimensional crystalline representations. Further,   
 $$\rho_{t,2}|_{G_{F^+_v}}\sim {\rm Ind}^{G_{\Q_{2}}}_{G_{\Q_{2^2}}}\omega^{3}_2 \otimes \mu_{\zeta_4}\oplus 
 \Big( \ve_2\otimes\Big({\rm Ind}^{G_{\Q_{2}}}_{G_{\Q_{2^2}}}\omega_2 \otimes \mu_{\zeta_4}\Big)\Big)$$
where $\omega_2:G_{\Q_{2^2}}\lra G^{{\rm ab}}_{\Q_{2^2}}\simeq \Z^\times_{2^2}\times 
{\rm Gal}(\bF_2/\F_{2^2})\lra \Z^\times_{2^2}\hookrightarrow \O^\times_{\bQ_2}$ and 
$\mu_{\zeta_4}:G_{\Q_p}\lra \O^\times_{\bQ_2}$ is the unramified character sending ${\rm Frob}_2$ 
to $\zeta_4$. 
\item If $F^+_v\simeq \Q_2$ and $\ord_v(t)>1$, $\rho_{t,2}|_{G_{F^+_v}}$ is absolutely irreducible and 
$$\rho_{t,2}|_{G_{F^+_v}}\simeq {\rm Ind}^{G_{\Q_{2}}}_{G_{\Q_{2^4}}}\theta$$ 
where $\theta:G_{\Q_{2^4}}\lra \O^\times_{\bQ_2}$ is a crystalline character such that 
$\ds\bigcup_{\tau:\Q_{2^4}\hookrightarrow \bQ_2}{\rm HT}_{\tau}(\theta)=\{0,1,2,3\}$.  
\item If  $\ord_v(t-1)>0$, or $\ord_v(t)<0$, then $\rho_{t,2}|_{G_{F^+_v}}$ is potentially ordinary.   
\end{enumerate}
In either case,  the local Galois representation $\rho_{t,2}|_{G_{F^+_v}}$ is potentially diagonalizable. 
\end{prop}
\begin{proof}Let $K=F^+_v$ with the residue field $\F=\F_v$ for simplicity. Put $f=[\F:\F_2]$ 
and $\rho=\rho_{t,2}|_{G_K}$ for simplicity.
The case when $\ord_v(t)<0$ in the third claim follows from Lemma \ref{p=l}-(3). 

Assume $F^+_v=\Q_2$ and $\ord_v(t)>0$.  Now $K=F^+_v=\Q_2$, $\F=\F_v=\F_2$. 
Let $\mathcal{W}_0$ be a smooth model of $W_0$ over $\Z_2$. 
By using the comparison theorem \cite{BO}, we have a filtered $\vp$-module 
$$D:=D_{\rm crys}(\rho)\simeq H^3_{{\rm crys}}(\mathcal{W}_{0}\otimes_{\Z_2}\F_v,W(\F_v))
\otimes_{W(\F_v)}K$$
where $\varphi$ stands for the crystalline Frobenius and the filtration comes 
from Berthelot-Ogus's comparison theorem.
By \cite[Lemma 16, Lemma 23, and Theorem 24]{Gou}, 
$\det(t\cdot {\rm id}_D-\varphi|D)=t^4+2^6\in \Z_2[t]$ and $D$ is pure of slope $\frac{3}{2}$. 
It follows from this with the admissibility that  $\rho$ does not have any 1-dimensional subquotient over $\bQ_2$ as a filtered $\vp$-module. 
If $\rho$ is reducible over $\bQ_2$, then each irreducible constituent is of dimension 2. 
Write $\rho^{{\rm ss}}=\rho_1\oplus \rho_2$ where $\rho_1$ is 
crystalline of Hodge-Tate weights $\{1,2\}$ and $\rho_2$ is 
crystalline of Hodge-Tate weights $\{0,3\}$ (we also use $\rho^\vee(-3)\simeq \rho$). 
By \cite{GL}, $\rho_1\otimes \ve^{-1}_2$ is potentially diagonalizable and so is $\rho_1$. 
As for $\rho_2$, the eigen polynomial of the crystalline Frobenius on $D''$ is given by 
$t^2+a_2 t+2^3$ where $a_2=\zeta_8 2\sqrt{2}\alpha$ with $\alpha\in \{0,\pm 1\pm \zeta^2_8\}$. 
Then, $\ord_v(a_2)=\frac{3}{2}+\ord_v(\alpha)$ and 
$\ord_v(\alpha)=\infty$ or $\ord_v(\alpha)=\frac{1}{2}$. In either case, $\ord_v(a_2)>\frac{3}{2}$. 
By \cite[Theorem 1.1 and the existence of a family in Section 3, in particular, p.3182]{BL} 
with \cite[p.33, Proposition 3.2]{Breuil}, 
$\rho_2\sim \Big({\rm Ind}^{G_{\Q_2}}_{G_{\Q_{2^2}}}\omega^3_2\Big)\otimes\mu_{\sqrt{-1}}$. 
A similar argument shows $\rho_1\sim  \ve_2\otimes\Big({\rm Ind}^{G_{\Q_{2}}}_{G_{\Q_{2^2}}}\omega_2 \otimes \mu_{\zeta_4}\Big)$ 
as well. 
By the argument for the proof of \cite[Proposition 4.2]{Yam}, 
we see that $\rho\sim \rho_1\oplus \rho_2$ and the potentially diagonalizability follows from \cite[p.530, the property (5)]{BGGT}. 

When $\rho$ is irreducible over $\bQ_2$ (we may assume $\rho$ takes the coefficients in $E$), 
as \cite[p.33, Proposition 3.2]{Breuil} with \cite[the proof of Lemma 2.1.12]{GHLS}, it is easy to see $\rho_{t,2}|_{G_K}\simeq {\rm Ind}^{G_{\Q_2}}_{G_{\Q_{2^4}}}\theta$ where 
$\theta:G_{\Q_{2^4}}\lra \O^\times_E$ is some crystalline character. 
A key point is that $D=D_{{\rm crys}}(\rho)$ is pure of slope $\frac{3}{2}$. 
Since 
${\rm Ind}^{G_{\Q_2}}_{G_{\Q_{2^4}}}\theta$ 
is potentially diagonalizable by \cite[Lemma 1.4.3-(1)]{BGGT}, so is $\rho_{t,2}|_{G_K}$. 

Since $f_t$ is irreducible over $F^+$ by assumption, 
by \cite[Proposition 6.2 with Theorem 6.10]{TY}, the image of $\br_{t,2}|_{G_{\Q_2}}$ is isomorphic to 
the Galois group of $g_t(x):=x^5f_t(x^{-1})=x^5-5tx+4$ over $\Q_2$. When $\ord_v(t)=1$, $g_t$ is 
decomposed into two irreducible polynomials of degree 1 and 4 over $\Q_2$. In this case, 
we see that ${\rm Im}(\br_{t,2}|_{G_{\Q_2}})\simeq D_4$ and $\br_{t,2}(I_2)\simeq C_2\times C_2$ which lies in 
the unipotent radaical of the Siegel parabolic subgroup (cf. \cite[Section 2]{TY})  inside $\GSp_4(\F_2)$. 
On the other hand, when $\ord_v(t)>1$, hence $\ord_v(t)\ge 2$ (note that $\F^+_v=\Q_2$),  $g_t$ is 
irreducible over $\Q_2$ by Eisenstein-Dumas criterion. In this case, we see easily that  
${\rm Im}(\br_{t,2}|_{G_{\Q_2}})\simeq F_{20}\simeq C_5\rtimes C_4$ and $\br_{t,2}(I_2)\simeq C_5$. 

Thus, by observing the semisimplifiction of the mod 2 reduction, which case occurs is determined by the condition on 
$\ord_v(t)$. In fact, in the reducible case, the semisimplifiction of the mod 2 reduction is trivial and 
it never happens when $\ord_v(t)>1$. 

Finally, we assume the remaining condition that $\ord_v(t-1)>0$. We work on the Dwork quintic family. 
Let $W_K$ be the Weil-Deligne group of $K$.  
We view $Y_{t}$ as a projective flat 
scheme over $O_K$ whose generic fiber is denote by the same symbol. Let $\widetilde{Y}_t:=Y_t \otimes_{O_K}\F$ be the special fiber of $Y_t$. We may assume $\F$ includes 
a primitive fifth root $\zeta_5$ of unity by enlarging $K$ if necessary. 
By assumption, $Y_t$ has only quadratic singularity which sits on the special fiber and the singular locus of $\widetilde{Y}_t(\bF)$ is  explicitly given by 
$\Sigma:=\{[1:\zeta^{a_1}_5:\zeta^{a_2}_5:\zeta^{a_3}_5:\zeta^{a_4}_5]\in \mathbb{P}^4(\bF)\ |\ 
a_1,a_2,a_3,a_4\in \Z/5\Z,\  a_1+a_2+a_3+a_4=0\}$ 
(cf. \cite{Schoen} or \cite[Chapter 3]{Meyer}). 
Applying Picard-Lefschetz formula \cite[Expos\'e XV, Th\'eor\`eme .4]{SGA7II} 
(or \cite{Illusie}), for any fixed rational prime $\ell>2$, we have an exact sequence 
$$0\lra H^3_{\text{\'et}}(\widetilde{Y}_{t,\bF},\Q_\ell)\lra H^3_{\text{\'et}}(Y_{t,\overline{K}},\Q_\ell)\lra 
\sum_{x\in \Sigma}\Q_\ell(-2)\lra H^4_{\text{\'et}}(\widetilde{Y}_{t,\bF},\Q_\ell)\lra H^4_{\text{\'et}}(Y_{t,\overline{K}},\Q_\ell)\lra 0$$
such that the image of $H^3_{\text{\'et}}(\widetilde{Y}_{t,\bF},\Q_\ell)$ is contained in $H^3_{\text{\'et}}(Y_{t,\overline{K}},\Q_\ell)^{I_K}$. 
Notice that $H^4_{\text{\'et}}(Y_{t,\overline{K}},\Q_\ell)\simeq \Q_\ell(-2)$ and thus 
$H^4_{\text{\'et}}(\widetilde{Y}_{t,\bF},\Q_\ell)^{{\rm ss}}$ is pure of weight $-2$. 
By using this with \cite[p.134]{CORV}, we see that 
the Weil-Deligne represenation associated to $H^3_{\text{\'et}}(W_{t,\overline{K}},\Q_\ell)$ 
is isomorphic to $\chi_1\oplus \chi_2\oplus V$ or $\chi\otimes {\rm Sp}(2)\oplus V$ where 
each of $\chi_1,\chi_2,\chi$ is an unramified twist of 
a power of $\ve_2|_{W_K}$ and   
${\rm Sp}(2)$ is defined in \cite[(4.1.4)]{Tate}. Further, $V$ is an unramified representation of 
$W_K$ and the Frobenius eigen polynomial is given by $a_{2^f}$ where $a_{2^f}$ (resp. $a_2$) is 
$2^f$-th ($2$-nd) Fourier coefficient of a newform in $S_4(\G_0(25))$ (see \cite[p.134]{CORV}). 
Notice that $a_2=1$ and thus, $a_{2^f}\equiv (a_2)^f=1$ mod 2. Let $\pi_v$ be a unique 
admissible representation of $\GL_4(K)$ corresponding to $H^3_{\text{\'et}}(W_{t,\overline{K}},\Q_\ell)$. Since $H^3_{\text{\'et}}(W_{t,\overline{F^+}},\Q_\ell)$ is potentially automorphic 
by \cite{CHT}, replacing $F^+$ with a finite totally real extension, we can apply 
\cite[Theorem 1.1]{Ca} and \cite[p.538, Theorem 2.1.1-(6)]{BGGT} to 
conclude $\rho_{t,2}|_{G_K}$ is potentially ordinary. 
Thus, it is potentially diagonalizable by \cite[Lemma 1.4.3]{BGGT}. 
\end{proof}

\begin{thm}\label{autoDwork}Keep the notation being as above. 
Assume the Galois group of $f_t$ contains $A_5$ and $\iota(t)<1$ for any $\iota:F^+\hookrightarrow \R$. Further suppose that each finite place $v$ of $F^+$ above 2 satisfies 
$\ord_v(t)<0$ or $\ord_v(t-1)>0$. 
Then, $\rho_{t,2}$ is absolutely irreducible and it comes from a regular algebraic cuspidal 
automorphic representation  of $\GSp_4(\A_{F^+})$ of weight 0.  Further, in terms of classical language, one can find a holomorphic Hilbert-Siegel modular form $h$ of the paralell weight 
$\overbrace{((3,3),\ldots,(3,3))}^{[F^+:\Q]}$ on $\GSp_4(\A_{F^+})$ such that  
$$\rho_{t,2}\simeq \rho_{h,2}.$$
Here $\rho_{h,2}:G_\Q\lra \GSp_4(\bQ_2)$ is the 2-adic Galois 
representation attached to $h$ $($cf. \cite[Section 3]{Mok}$)$. 

Furthermore, $h$ is a symmetric cubic lift or genuine form in the sense of 
\cite[Section 2]{KWY}. 
In particular, the primitive part of 
$H^{3}_{\text{\'et}}(Y_{t,\overline{F^+}},\Q_\ell)$ is automorphic 
for any rational prime $\ell$ and it is also absolutely irreducible for   
the set of rational primes $\ell$ of Dirichlet density one.
\end{thm}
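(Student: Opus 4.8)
The plan is to feed $\rho_{t,2}$ into the automorphy lifting theorem of Section~\ref{ALT}, using the description of the residual image from \cite{TY}, and then to descend the resulting $\GL_4$-automorphic representation to $\GSp_4$ and translate it into a classical Siegel form. First I would collect the input on $\br_{t,2}$. Since ${\rm Gal}(F^+_{f_t}/F^+)$ contains $A_5$ and $\deg f_t=5$, the trinomial $f_t$ is irreducible over $F^+$, so by \cite[Theorem 1.1]{TY} the representation $\br_{t,2}$ is, up to conjugacy, the standard representation $\theta_5$ of ${\rm Gal}(F^+_{f_t}/F^+)\in\{A_5,S_5\}$; in particular $\br_{t,2}$ is absolutely irreducible (the $4$-dimensional standard representation of $A_5$ over $\bF_2$ is absolutely irreducible, cf. the remark after Proposition~\ref{monod}), hence $\rho_{t,2}$ is absolutely irreducible, which is assertion~(1). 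Because $\nu\circ\br_{t,2}=\bar{\ve}^{1-4}_2$ is trivial, ${\rm Im}(\br_{t,2})\subset\Sp_4(\F_2)$, and by Theorem~\ref{MD2power} (or Proposition~\ref{concrete1}) $\MD_4(\F_2)=S_5$ with monodromy factoring through $\theta_5$; thus ${\rm Im}(\br_{t,2})$ is conjugate into $\MD_4(\F_2)$. Moreover $\br_{t,2}$ is strongly residually totally odd by \cite[Section 7.1]{TY} and \cite[Remark 5.2.8]{BCGP2} (here $\iota(t)<1$ for all $\iota:F^+\hookrightarrow\R$ is used), and ${\rm Im}(\br_{t,2})\in\{A_5,S_5\}$ is nearly adequate in $\GSp_4(\F_2)$ by the small-image analysis of \cite{BCGP2}.

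Next I would fix a CM field $F$ with maximal totally real subfield $F^+$ (e.g.\ $F=F^+(\sqrt{-d})$ for suitable totally negative $d$) linearly disjoint over $F^+$ from $\overline{F^+}^{{\rm Ker}(\br_{t,2})}$, so that all of the above persists over $F$, and then check the hypotheses of Theorem~\ref{main1} for $\rho:=\rho_{t,2}$. Conditions (3), (4), (5) are exactly the facts just listed; condition (2) holds because $\rho_{t,2}|_{G_{F^+_v}}$, hence $\rho_{t,2}|_{G_{F_{\tilde v}}}$, is potentially diagonalizable at each $v\mid 2$ by Proposition~\ref{PD} (applied according to which of its three cases $v$ falls into) and has Hodge--Tate weights $\{0,1,2,3\}$ by Lemma~\ref{p=l}\,(1); condition (6) follows from Theorem~\ref{pot-auto} applied with $r=1$, $n_1=4$, $\ell_1=2$, which produces a finite totally real $F'/F^+$ and an $\iota$-ordinary, weight-$0$, regular algebraic cuspidal polarized automorphic representation of $\GL_4(\A_{F'})$ reducing to $\br_{t,2}|_{G_{F'}}$, so $\br_{t,2}$ is potentially diagonalizably automorphic. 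Theorem~\ref{main1} (or Theorem~\ref{CWL} for the refined level statement) then gives a regular algebraic, cuspidal, polarized automorphic representation $\pi$ of $\GL_4(\A_F)$ with $\breve{r}_{2,\iota}(\pi)\simeq\rho_{t,2}|_{G_F}$, of weight $0$ and $r_{2,\iota}(\chi)\ve^{1-4}_2=\nu\circ\rho_{t,2}$.

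It then remains to descend and to propagate to all $\ell$. Since $\rho_{t,2}$ is $\GSp_4(\bQ_2)$-valued with multiplier $\ve^{-3}_2$, $\pi$ is conjugate self-dual of symplectic type with the parity needed for descent, so by the $\GL_4$--$\GSp_4$ functoriality and descent along $F/F^+$ (Arthur's classification; cf.\ \cite{Mok}) $\pi$ is the base change of a cuspidal automorphic representation $\Pi$ of $\GSp_4(\A_{F^+})$ that is regular algebraic of weight $0$; the holomorphic member of its archimedean $L$-packet is a holomorphic Hilbert--Siegel cusp form $h$ of parallel weight $((3,3),\dots,(3,3))$ with $\rho_{h,2}\simeq\rho_{t,2}$. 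Finally $\{V_{t,\ell}=H^3_{\text{\'et}}(W_{t,\overline{F^+}},\Q_\ell)\}_\ell$ (equivalently the primitive part of $H^3_{\text{\'et}}(Y_{t,\overline{F^+}},\Q_\ell)$, up to a coefficient extension) and $\{\rho_{h,\ell}\}_\ell$ are weakly compatible systems with equal Frobenius characteristic polynomials at almost all places, since they agree at $\ell=2$ where $\rho_{t,2}$ is absolutely irreducible; by Brauer--Nesbitt and Chebotarev $\rho_{t,\ell}^{{\rm ss}}\simeq\rho_{h,\ell}^{{\rm ss}}$ for all $\ell$, by \cite[Theorem 3.2]{CG} $\rho_{t,\ell}$ is absolutely irreducible for a density-one set of $\ell$ where then $\rho_{t,\ell}\simeq\rho_{h,\ell}$, and so the primitive part of $H^3_{\text{\'et}}(Y_{t,\overline{F^+}},\Q_\ell)$ is automorphic for every $\ell$.

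For the Arthur type of $h$: CAP (Saito--Kurokawa) and endoscopic (Yoshida) forms have reducible associated Galois representations and are excluded by the absolute irreducibility of $\rho_{t,2}$, while an Asai-type lift from a quadratic extension $E/F^+$ would force the $5$-dimensional ${\rm SO}_5$-constituent of $\wedge^2\rho_{t,2}$ to become reducible over $G_E$, which is incompatible with its residual image governed by $A_5$ acting through $\theta_5$ (the corresponding $\wedge^2$-constituent stays irreducible over every index-$2$ subgroup), exactly as over $\Q$ in the proof of Theorem~\ref{thm-app1}; hence $h$ is a symmetric cubic lift or a genuine form in the sense of \cite[Section 2]{KWY}. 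I expect the main obstacle to be twofold: verifying that the small images $A_5$ and $S_5$ are nearly adequate in $\GSp_4(\F_2)$ and that the local conditions of Proposition~\ref{PD} genuinely deliver potential diagonalizability at every place above $2$, so that Theorem~\ref{main1} applies; and, on the automorphic side, carrying out the descent to $\GSp_4(\A_{F^+})$ over a general totally real field together with the exclusion of the Asai-type lift --- this last point being the most delicate, since unlike CAP and Yoshida forms an Asai lift can have an absolutely irreducible Galois representation.
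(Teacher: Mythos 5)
Your overall strategy (check the hypotheses of Theorem \ref{main1} for $\rho_{t,2}$, then descend to $\GSp_4$ and rule out the non-genuine Arthur types) matches the paper's, and most of your verifications — absolute irreducibility via \cite{TY}, strong residual oddness from $\iota(t)<1$, containment in $\MD_4(\F_2)$, potential diagonalizability at $v\mid 2$ via Proposition \ref{PD} — are the ones the paper uses. But there is one genuine gap, and it sits at the most delicate hypothesis: condition (6) of Theorem \ref{main1}, the \emph{residual automorphy} of $\br_{t,2}$. You propose to verify it by applying Theorem \ref{pot-auto}, concluding that ``$\br_{t,2}$ is potentially diagonalizably automorphic.'' This conflates two different notions. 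In the terminology of \cite{BGGT} used throughout the paper, ``potentially diagonalizably automorphic'' means automorphic \emph{over the base field}, arising from a $\pi$ whose components above $2$ are potentially diagonalizable; the adverb modifies ``diagonalizable,'' not ``automorphic.'' Theorem \ref{pot-auto} only produces automorphy of $\br_{t,2}|_{G_{F'}}$ over a finite totally real extension $F'$ coming from Moret-Bailly, which is in general not solvable over $F^+$; feeding that into Theorem \ref{main1} over $F'$ would only give \emph{potential} automorphy of $\rho_{t,2}$, not the Hilbert--Siegel form over $F^+$ that the theorem asserts.

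The paper closes this gap differently: using \cite[Theorem 1.2]{TY} it first passes to an at most quadratic (hence solvable) totally real extension $M/F^+$ over which $\br_{t,2}(G_M)\simeq A_5$ acting through the standard representation, still strongly residually totally odd and inside $\MD_4(\F_2)$; near adequacy of this $A_5$-image is \cite[Lemma 8.2.9]{BCGP2}; and then the crucial input is \cite[Lemma 8.3.1]{BCGP2}, which supplies residual \emph{ordinary} automorphy of $\br_{t,2}$ after a further \emph{solvable} base change. Since every extension in this chain is solvable, \cite[Lemma 2.2.2]{BGGT} lets one descend the conclusion of Theorem \ref{main1} back to $F^+$. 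So the missing idea in your write-up is precisely the reduction to the residually-$A_5$ situation over a solvable extension and the invocation of the Boxer--Calegari--Gee--Pilloni residual automorphy result there; without it, condition (6) is not verified over any field from which you can descend. Your treatment of the descent to $\GSp_4$ and the exclusion of CAP, Yoshida and Asai lifts is more detailed than the paper's (which only records that absolute irreducibility and an image containing $A_5$ suffice), and is fine as a supplement once the automorphy itself is secured.
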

\begin{proof}By \cite[Theorem 1.2]{TY}, $\br_{t,2}$ is absolutely irreducible and 
so is $\rho_{t,2}$. 
By \cite[Theorem 1.2]{TY} again and by assumption, 
there exists at most quadratic, totally real extension $F^+_1/F^+$ (in fact $F^+_1=F^+(\sqrt{5(1-t^5)})$) such that 
$\br_{t,2}|_{G_{F^+_1}}$ satisfies 
\begin{enumerate}
\item $\br_{t,2}(G_{F^+_1})\simeq A_5$ and it factors through the absolutely 
irreducible representation $\theta_4$ over $\bF_2$ of $A_5$;
\item $\br_{t,2}|_{G_{F^+_1}}$ is strongly residually totally odd; 
\end{enumerate}
The first condition implies $\br_{t,2}|_{G_{F^+_1}}$ (and also $\rho_{t,2}|_{G_{F^+_1}}$) is absolutely irreducible and 
$\br_{t,2}(G_{F^+_1}))$ is nearly adequate by \cite[Lemma 8.2.9]{BCGP2}.  
By assumption with Proposition \ref{PD}-(3), $\rho_{t,2}$ is ordinary at 
any place of $F^+_1$ dividing 2. 

By argument for \cite[Section 4]{TY} and by using  \cite[Lemma 8.3.1]{BCGP2} 
(and also Proposition \ref{PD}-(3)),  
we can find a solvable totally imaginary solvable extension $M/F^+_1$  
and a regular algebraic self-dual cuspidal automorphic representation $\pi$ of $\GL_2(\A_{M})$ of weight 0 such that 
\begin{enumerate}
\item $\rho_{t,2}|_{G_M}$ is ordinary at any place of $M$ dividing $2$;
\item $\pi$ is $\iota$-ordinary at any place of $M$ dividing 2; 
\item $\br_{t,2}|_{G_{M}}\simeq {\rm Sym}^3(\bar{r}_{2,\iota}(\pi))$;
\item $\br_{t,2}(G_{M})=A_5$;
\item  $M/M^+$ is everywhere unramified. 
\item All places of $M^+$ above 2 split in $M$.  As do all places lying under a place 
at which $\pi$ is ramified.   
\end{enumerate}
Thus, all conditions of \cite[Theorem 5.7.14]{BCGP2} are fulfilled and  
$\rho|_{G_{M}}$ is automorphic. The claim follows from \cite[Lemma 2.2.2]{BGGT}.   

The latter claim follows from the condition on the image of $\br$ and \cite[Theorem 3.2]{CG}.
\end{proof}

\begin{rmk}\label{variousremark}
\begin{enumerate}
\item The assumption being strongly residually totally odd is equivalent to 
$\iota(t)<1$ for any $\iota:F^+\hookrightarrow \R$. 
This is necessary to conclude $\br_{t,2}$ is automorphic by using 
\cite[Theorem 1.2]{TY} or \cite[Lemma 8.3.1]{BCGP2}. 
\item A twisted version of the Dowrk quintic family has been studied in \cite[Section 8]{TY} 
and by using \cite[Theorem 8.1]{TY}, we have a similar result of Theorem \ref{autoDwork} 
for that family. 
\item In Theorem \ref{autoDwork}, if the Galois group of $f_t$ is $S_5$, then  $h$ can not be a symmetric cubic lift because of 
the classification of the projective images of the finite subgroups in $\GL_2(\bF_2)$ 
\cite[Theorem D]{Faber} 
$($or because of the minimal dimension of the irreducible representation of $S_5$ over $\bF_2$ \cite{Wagner}$)$. 
\end{enumerate}
\end{rmk}

\begin{cor}Assume $F^+=\Q$ and $t\in \Q\setminus\{0\}$ satisfies $t<1$. Assume 
$f_t$ is irreducible over $\Q$. Further assume $\ord_2(t)<0$ or $\ord_t(t-1)>0$. 
Then, $\rho_{t,2}:G_\Q\lra \GSp_4(\Q_2)$ comes from a regular algebraic cuspidal 
automorphic representation of $\GSp_4(\A_{\Q})$ of weight 0.  In terms of classical language, one can find a holomorphic Siegel modular form $h$ of the paralell weight 
$(3,3)$ on $\GSp_4(\A_{\Q})$ such that $\rho_{t,2}\simeq \rho_{h,2}$. 
Furthermore, $h$ is a genuine form in the sense of 
\cite[Section 2]{KWY} unless $t=0$ in which case $h$ is the automorphic induction of 
a certain Hecke character of $\A_{\Q(\zeta_5)}$. 
\end{cor}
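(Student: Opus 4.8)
The plan is to deduce the corollary from Theorem \ref{autoDwork} applied with $F^+=\Q$, together with Remark \ref{variousremark}(3). Since $\Q$ has a single real place, ``strongly residually totally odd'' is exactly the hypothesis $t<1$. The unique place of $\Q$ above $2$ is $v=2$, with $F^+_v\cong\Q_2$, and I first observe that one of the conditions of Proposition \ref{PD} always holds: if $\ord_2(t)<0$ we are in case (3), if $\ord_2(t)>0$ in case (1), and if $\ord_2(t)=0$ then $t=a/b$ with $a,b$ odd, so $a-b$ is even and $\ord_2(t-1)=\ord_2((a-b)/b)\ge 1$, that is, case (2). Thus $\rho_{t,2}|_{G_{\Q_2}}$ is potentially diagonalizable unconditionally, and the only hypothesis of Theorem \ref{autoDwork} still to be checked is that the Galois group of $f_t$ contains $A_5$; I will in fact establish the sharper statement $\mathrm{Gal}(f_t/\Q)=S_5$, which is precisely what Remark \ref{variousremark}(3) requires.

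To pin down the Galois group, recall $f_t$ is irreducible of degree $5$, so $\mathrm{Gal}(f_t/\Q)\in\{C_5,D_5,F_{20},A_5,S_5\}$, and that by the discriminant formula recorded after (\ref{tri}) one has $\mathrm{disc}(f_t)=2^8\cdot5^5\cdot(1-t^5)$, which lies in $(\Q^\times)^2$ if and only if $5(1-t^5)$ does. Three possibilities then need to be ruled out. First, $C_5$ and $D_5$ are impossible because, by \cite{TY}, $\br_{t,2}$ is (up to conjugacy) the standard representation $\theta_5$ of $\mathrm{Gal}(f_t/\Q)$, and $\theta_5$ is absolutely irreducible exactly on the subgroups $S_5$, $A_5$ and $F_{20}$. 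Second, $A_5$ (equivalently $\mathrm{Gal}(f_t/\Q)\subseteq A_5$) is excluded because $5(1-t^5)$ is never a square for $t\in\Q\setminus\{0,1\}$; this amounts to showing that the genus $2$ curve $y^2=5(1-t^5)$ has no rational point with such $t$, which one proves by a descent argument (or a Chabauty computation), as carried out in \cite{TY}. Third, $F_{20}$ is excluded for $t\ne0$ by Dummit's resolvent criterion: $\mathrm{Gal}(f_t/\Q)\le F_{20}$ forces the sextic resolvent of $f_t$ -- explicitly a polynomial with coefficients in $\Z[t]$, computed from the Bring--Jerrard form $x^5-5tx+4$ of $f_t$ -- to have a rational root, which confines $t$ to the rational points of an explicit auxiliary curve having none with $t\in\Q\setminus\{0\}$; the value $t=0$ really does give $\mathrm{Gal}(f_0/\Q)=F_{20}$, consistent with its exclusion in the corollary. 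Hence $\mathrm{Gal}(f_t/\Q)=S_5$.

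Granting $\mathrm{Gal}(f_t/\Q)=S_5$, Theorem \ref{autoDwork} with $F^+=\Q$ produces a regular algebraic cuspidal automorphic representation of $\GSp_4(\A_\Q)$ of weight $0$, hence a holomorphic Siegel cusp form $h$ of parallel weight $(3,3)$ with $\rho_{t,2}\simeq\rho_{h,2}$, and asserts that $h$ is a symmetric cubic lift or a genuine form. Since $\mathrm{Gal}(f_t/\Q)=S_5$, Remark \ref{variousremark}(3) rules out the symmetric cubic lift (no quotient of the projective image of a two-dimensional mod-$2$ representation is $S_5$, equivalently $S_5$ has no faithful two-dimensional representation over $\bF_2$), so $h$ is genuine in the sense of \cite{KWY}, which is the final assertion.

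I expect the main obstacle to be the second paragraph -- specifically, showing that neither the genus $2$ curve $y^2=5(1-t^5)$ nor the Dummit-resolvent curve of $f_t$ carries a rational point over $t\in\Q\setminus\{0,1\}$, so that the Galois group is forced to be the full symmetric group $S_5$. Once this diophantine input is secured, the corollary is a routine specialization of Theorem \ref{autoDwork} and Remark \ref{variousremark}.
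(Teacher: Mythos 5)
Your route is the paper's: the corollary is stated without proof as a specialization of Theorem \ref{autoDwork} combined with Remark \ref{variousremark}(3), and you correctly supply the two things left implicit, namely that for $F^+=\Q$ one of the three local conditions of Proposition \ref{PD} at $v=2$ always holds (your trichotomy on $\ord_2(t)$ is exactly right), and that ${\rm Gal}(f_t/\Q)$ must be all of $S_5$, which is what is needed both to invoke the theorem and to exclude the symmetric cubic lift via Remark \ref{variousremark}(3).

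One step of your Galois-group analysis is logically broken, though harmlessly so. You cannot exclude $C_5$ or $D_5$ by observing that $\theta_5$ restricted to them fails to be absolutely irreducible: if the Galois group were $C_5$ or $D_5$, the conclusion would simply be that $\br_{t,2}$ is reducible, and nothing established at that stage forbids this, so the argument is circular. Fortunately $C_5$ and $D_5$ are contained in $A_5$, so your second step (non-squareness of $5(1-t^5)$, i.e.\ ${\rm Gal}(f_t/\Q)\not\subseteq A_5$) disposes of them anyway; the first step should just be deleted. Note also that establishing $S_5$ on the nose, rather than merely containment of $A_5$, really is necessary for the genuineness claim, since $A_5\simeq {\rm PGL}_2(\F_4)$ does admit a faithful two-dimensional projective mod~$2$ representation. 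Finally, the two diophantine inputs (no rational point on $y^2=5(1-t^5)$ with $t\neq 0,1$, and no rational root of Dummit's sextic resolvent for $t\neq 0$) carry the real weight of the argument; you defer them to \cite{TY}, which is exactly what the paper does, so this is acceptable, but they should be cited as specific results there rather than sketched as computations you would redo.
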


\subsection{The non-parimitive part}\label{npp} 
Let $\ell$ be an arbitrary fixed rational prime $\ell$. 
Let $K$ be a number field. Recall  the Dwork quintic family 
\begin{equation}\label{dwork5}
Y:X^5_1+X^5_2+X^5_3+X^5_4+ X^5_5-5t X_1X_2X_3X_4X_5=0.
\end{equation} 
Assume $t\in K\setminus\{1\}$. In this section, we  will prove 
the strong Hasse-Weil conjecture for the middle cohomology of $Y_t$ under some condition. 
The primitive part is done in the previous section. Therefore, we may focus on the non-primitive part. If $t=0$,  by using \cite[p.104, Corollary 2.5]{KS}, 
$H^3_{\text{\'et}}(Y_{0,\bK},\Q_\ell)$ can be written as a direct summands of 
the induced representation 
of $\ell$-adic character attached to some algebraic Hecke characters on $\A_{K(\zeta_5)}$. 
Hence, the $L$-function $L(s,H^3_{\text{\'et}}(Y_{0,\bK},\Q_\ell))$ is extended 
holomorphically in $s$ to the whole space of complex numbers. 
Thus, we may assume $t\neq 0$. 
Then, by \cite[see p.478-479]{Gou},  as a $\Q_\ell[G_{K}]$-module, 
$$H^3_{\text{\'et}}(Y_{t,\overline{K}},\Q_\ell)^{{\rm ss}}\simeq 
H^3_{\text{\'et}}(W_{t,\overline{K}},\Q_\ell)^{{\rm ss}}
\oplus \bigoplus^{10}H^1_{\text{\'et}}(\mathcal A_{t,\overline{K}},\Q_\ell)(-1)
\oplus \bigoplus^{15}H^1_{\text{\'et}}(\mathcal B_{t,\overline{K}},\Q_\ell)(-1)$$
where $\mathcal A_t$ and $\mathcal B_t$ are projective smooth models over $K$ of 
affine models 
$$\mathcal A^\circ_t:y^5=x^2(1-x)^3(x-t^3)^2$$
and 
$$\mathcal B^\circ_t:y^5=x^2(1-x)^4(x-t^5).$$
Here we replace the parameter $\psi$ therein with our parameter $t$. 
The curves $\mathcal A_t$ and $\mathcal B_t$ have genus four. 

It is easy to see that each of there curve inherits an involution defined over $K$ as follows:
$$\iota_{1}:\mathcal A^\circ_t\lra \mathcal A^\circ_t,\ (x,y)\mapsto 
\Big(\frac{t^5}{ x}, \frac{t^4 (t^5 - x) (-1 + x)}{xy}\Big),$$
and 
$$\iota_{2}:\mathcal B^\circ_t\lra \mathcal B^\circ_t,\ (x,y)\mapsto 
\Big(\frac{t^5}{ x}, \frac{t^3 (t^5 - x) (-1 + x)}{xy}\Big),$$
respectively. These are naturally extended to involutions over $K$ on the 
projective smooth models and denote them by the same symbols.  
By \cite[see p.478-479]{Gou} and Faltings' theorem \cite{Fal}, it is easy to see that 
${\rm Jac}(\mathcal A_t)$ 
(resp. ${\rm Jac}(\mathcal B_t)$) is isogenous over $K$ 
to ${\rm Jac}(\mathcal A_t/\langle \iota_1 \rangle)^2$ (resp. 
${\rm Jac}(\mathcal B_t/\langle \iota_2 \rangle)^2$). 

For each $k=1,2$, put 
$$(X,Y)=\Big(x+\iota_k(x),\ \frac{y}{x}+\iota_k\Big(\frac{y}{x}\Big)\Big).$$ 
Then, $\mathcal A_t/\langle \iota_1 \rangle)$ has an affine model 
$$2 t^5 (1 + 2 t^5 + t^{10}) - (1 + 6 t^5 + 5 t^{10}) X + 
 2 (1 + 2 t^5) X^2 - X^3 +5 t^3 (1 + 2 t^5 + t^{10}) Y -$$ 
$$ 10 t^3 (1 + t^5) X Y + 5 t^3 X^2 Y - 5 t^4 (1 + t^5) Y^3 + 
 5 t^4 X Y^3 + t^5 Y^5=0$$
 and $\mathcal B_t/\langle \iota_2 \rangle)$ has an affine model 
$$2 t^5 (1 - 2 t^5 - 3 t^{10}) + 
 t^5 (1 + 10 t^5 + t^{10}) X - (1 + 4 t^5 + t^{10}) X^2 + X^3 + 
 5 t^6 (1 + 2 t^5 + t^{10}) Y -$$
 $$ 10 t^6 (1 + t^5) X Y + 5 t^6 X^2 Y + 
 5 t^8 (-1 - t^5) Y^3 + 5 t^8 X Y^3 + t^{10} Y^5=0.$$
 For the affine model of $\mathcal A_t/\langle \iota_1 \rangle$, 
 transforming $(X,Y)$ into $(X,Y)=(y^2 x + t^5 + 1, y)$ and then 
 proceeding as  
$$y \mapsto \frac{y}{x^3},\ y \mapsto \frac{y}{2} + \frac{1}{2}(5 t^3 x^2 + 5 t^4 x + t^5),$$
we have an affine model of genus 2
\begin{equation}\label{Ct}
C_t:y^2=P_t(x):= 4(t^5-1)x^5+ 25 t^6 x^4+ 50 t^7 x^3 + 35 t^8 x^2+ 10 t^9 x  +t^{10}.
\end{equation}
Note that the discriminant of $P_t(x)$ in $x$ is $2^8\cdot 5^5\cdot t^{40}(1-t^5)^2$. 
Similarly, as for the affine model of $\mathcal B_t/\langle \iota_2 \rangle)$, 
transforming $(X,Y)$ into $(X,Y)=(x y^2 + t^5 + 1, y)$ (which is slightly different from the former one) and then 
 proceeding as  
$$y \mapsto \frac{y}{x^3},\ y \mapsto \frac{y}{2}  + \frac{1}{2}(5 t^3 x^2 + 5 t^4 x + t^5),$$
we have the same affine model $C_t$ of genus 2.  
Summing up, we have proved the following result which is expected in 
\cite[p.480, after Remark 4]{Gou}:
\begin{prop}\label{AtBt}Keept the notation being as above. For each $t\in K\setminus\{0,1\}$, 
$${\rm Jac}(\mathcal A_t)\stackrel{K}{\sim} 
{\rm Jac}(\mathcal A_t/\langle \iota_1 \rangle)^2
\stackrel{K}{\sim}  {\rm Jac}(C_t)^2 
\stackrel{K}{\sim} 
{\rm Jac}(\mathcal B_t/\langle \iota_2 \rangle)^2
\stackrel{K}{\sim}  {\rm Jac}(\mathcal B_t).$$
In particular, we have a decomposition, 
\begin{equation}\label{dtotal}
H^3_{\text{\'et}}(Y_{t,\overline{K}},\Q_\ell)^{{\rm ss}}\simeq 
H^3_{\text{\'et}}(W_{t,\overline{K}},\Q_\ell)
\oplus \bigoplus^{50}H^1_{\text{\'et}}(C_{t,\overline{K}},\Q_\ell)(-1)
\end{equation}
as a $\Q_\ell[G_K]$-module. 
\end{prop}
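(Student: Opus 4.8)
The plan is to establish the isogeny chain ${\rm Jac}(\mathcal A_t)\sim {\rm Jac}(\mathcal A_t/\langle\iota_1\rangle)^2 \sim {\rm Jac}(C_t)^2 \sim {\rm Jac}(\mathcal B_t/\langle\iota_2\rangle)^2 \sim {\rm Jac}(\mathcal B_t)$ over $K$, and then deduce the cohomological decomposition (\ref{dtotal}) by combining this with the decomposition of $H^3_{\text{\'et}}(Y_{t,\overline K},\Q_\ell)^{{\rm ss}}$ recalled from \cite{Gou}. The first two steps — the identification of the quotient curves $\mathcal A_t/\langle\iota_1\rangle$ and $\mathcal B_t/\langle\iota_2\rangle$ with the genus-$2$ curve $C_t$ of (\ref{Ct}) — are pure coordinate algebra: one exhibits the explicit birational transformations written down above (first passing to the invariant coordinates $(X,Y)=(x+\iota_k(x),\ y/x+\iota_k(y/x))$, then the substitutions $y\mapsto y/x^3$ followed by the affine shift $y\mapsto y/2 + \tfrac12(5t^3x^2+5t^4x+t^5)$) and checks by direct computation that they carry the affine models of the quotients onto $y^2=P_t(x)$; smoothness of $C_t$ follows from the stated discriminant $2^8\cdot 5^5\cdot t^{40}(1-t^5)^2$ being a unit away from $2$, $5$, and $t=0,1$. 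These verifications are routine but tedious; I would relegate the polynomial identities to a remark that they can be checked on a computer algebra system.

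Next I would justify that each of $\mathcal A_t$, $\mathcal B_t$ has genus $4$ and that the involution quotient accounts for half the Jacobian. For a curve $C$ with an involution $\iota$ defined over $K$, one has a $K$-isogeny decomposition ${\rm Jac}(C)\sim {\rm Jac}(C/\langle\iota\rangle)\times P$, where $P$ is the Prym variety; here the claim ${\rm Jac}(\mathcal A_t)\stackrel{K}{\sim}{\rm Jac}(\mathcal A_t/\langle\iota_1\rangle)^2$ requires, in addition, identifying the Prym with (an isogeny factor of) the quotient Jacobian again. The cleanest route is the one already invoked in the excerpt: the decomposition of $H^1_{\text{\'et}}$ as a $\Q_\ell[G_K]$-module coming from \cite{Gou} shows that $H^1(\mathcal A_{t})^{{\rm ss}}$ is isomorphic to two copies of $H^1(\text{genus-}2\text{ curve})^{{\rm ss}}$, hence the two Jacobians have isomorphic semisimplified Tate modules at every $\ell$, and then Faltings' isogeny theorem \cite{Fal} upgrades this to a genuine $K$-isogeny. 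The same argument applies verbatim to $\mathcal B_t$, and since both quotient Jacobians are shown above to equal ${\rm Jac}(C_t)$, all five abelian varieties are $K$-isogenous after taking appropriate powers.

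Finally, (\ref{dtotal}) is obtained by substituting ${\rm Jac}(\mathcal A_t)^{\rm{ss}}\sim {\rm Jac}(C_t)^2\sim {\rm Jac}(\mathcal B_t)^{\rm ss}$ into the displayed decomposition $H^3_{\text{\'et}}(Y_{t,\overline K},\Q_\ell)^{{\rm ss}}\simeq H^3_{\text{\'et}}(W_{t,\overline K},\Q_\ell)^{{\rm ss}}\oplus\bigoplus^{10}H^1_{\text{\'et}}(\mathcal A_{t,\overline K},\Q_\ell)(-1)\oplus\bigoplus^{15}H^1_{\text{\'et}}(\mathcal B_{t,\overline K},\Q_\ell)(-1)$ recalled at the start of this subsection: the isogenies give $H^1(\mathcal A_t)^{\rm ss}\simeq H^1(C_t)^{\oplus 2}$ and $H^1(\mathcal B_t)^{\rm ss}\simeq H^1(C_t)^{\oplus 2}$ as $\Q_\ell[G_K]$-modules (by Faltings, an isogeny over $K$ induces a $G_K$-equivariant isomorphism on rational Tate modules), whence the multiplicity of $H^1_{\text{\'et}}(C_{t,\overline K},\Q_\ell)(-1)$ becomes $2\cdot 10 + 2\cdot 15 = 50$. \textbf{The main obstacle} is not conceptual but bookkeeping: one must be careful that the two birational maps used for $\mathcal A_t/\langle\iota_1\rangle$ and $\mathcal B_t/\langle\iota_2\rangle$ really land on the \emph{same} $C_t$ (the excerpt flags that the intermediate substitution $(X,Y)\mapsto(y^2x+t^5+1,y)$ differs from $(X,Y)\mapsto(xy^2+t^5+1,y)$ between the two cases), and that no spurious factors of the discriminant are introduced that would spoil good reduction away from $2\cdot5\cdot t(1-t^5)$; I would state the relevant polynomial identities explicitly and note they were verified by computer.
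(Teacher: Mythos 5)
Your proposal is correct and follows essentially the same route as the paper: the isogeny ${\rm Jac}(\mathcal A_t)\stackrel{K}{\sim}{\rm Jac}(\mathcal A_t/\langle\iota_1\rangle)^2$ (and likewise for $\mathcal B_t$) is obtained exactly as you suggest, from Goutet's description of the Galois module $H^1$ on p.~478--479 of \cite{Gou} together with Faltings' isogeny theorem, while the identification of both quotients with the single genus-$2$ curve $C_t$ is done by the same explicit birational coordinate changes, and the multiplicity $2\cdot 10+2\cdot 15=50$ then drops out of the recalled decomposition of $H^3_{\text{\'et}}(Y_{t,\overline K},\Q_\ell)^{\rm ss}$. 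Your extra remark that the naive Prym decomposition alone would not suffice is a fair point of care, but the paper resolves it in the same way you do.
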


Henceforth, we assume $K=F^+$ is a totally real field. 
Let us consider the complex multiplication $\iota:(x,y)\mapsto (x,\zeta_5 y)$ on each of 
$\mathcal A^\circ_t$ and $\mathcal B^\circ_t$. 
Then $\iota^\ast+(\iota^{-1})^\ast$ is compatible with $\iota_1$ and $\iota_2$ 
as an algebraic correspondence defined over $F^+(\sqrt{5})$ on $\mathcal A_t$ and $\mathcal B_t$ respectively. 
It follows from this that 
${\rm Jac}(\mathcal A_t/\langle \iota_1 \rangle)$, 
${\rm Jac}(\mathcal B_t/\langle \iota_1 \rangle)$, and  ${\rm Jac}(C_t)$ 
(which are isogenous over $F^+$ each other) are abelian surfaces over $F^+$ and 
if ${\rm Jac}(C_t)$ is $F^+(\sqrt{5})$-simple,  they are  abelian surfaces of GL$_2$-type over $F^+(\sqrt{5})$ with the real multiplication by $\Q(\sqrt{5})$ in the sense of \cite{Ho} or \cite{Ribet}. 

\begin{lem}\label{Pt}Assume $t\in F^+\setminus\{0,1\}$.  Let $F^+_{P_t}$ be the decomposition field of $P_t$ over $F^+$ with the 
Galois group $G_{P_t,F^+}$. Then it holds 
\begin{enumerate}
\item The Galois group $G_{P_t,F^+}$ is solvable.
\item If $P_t$ is irreducible over $F^+$, then $G_{P_t}$ is isomorphic to $C_5$, $D_{10}$, 
or $F_{20}$. 
\item If $\sqrt{5}\not\in F^+$ and $P_t$ is irreducible over $F^+$, then $G_{P_t,F^+}$ is isomorphic to $F_{20}$. 
\item If $P_t$ is irreducible over $F^+_1:=F^+(\sqrt{5})$ and $\iota(t)<1$ for 
some embedding $\iota:F^+_1\hookrightarrow \R$, then 
the Galois group $G_{P_t,F^+_1}$ of $P_t$ over $F^+_1$ is isomorphic to $D_{10}$. 
Further, under the identification $G_{P_t,F^+_1}\subset S_5$, an element of order 2 
corresponds to an element of type $(2,2)$. 
\item ${\rm Jac(C_t)}$ is simple over $F^+_1$ if $P_t$ is irreducible over $F^+$.  
\end{enumerate}
\end{lem}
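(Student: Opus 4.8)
The plan is to recover $G_{P_t,F^+}$ from the Galois action on ${\rm Jac}(C_t)[2]$, using the real multiplication by $\Q(\sqrt5)$ recorded above. Since $t\neq0$ and $t^5\neq1$, the quintic $P_t$ is separable, so $C_t:y^2=P_t(x)$ is a genus-$2$ curve with $6$ Weierstrass points: the five points $(\alpha,0)$ with $P_t(\alpha)=0$ and the rational point at infinity. The representation $G_{F^+}\to{\rm Aut}({\rm Jac}(C_t)[2])\cong\Sp_4(\F_2)\cong S_6$ is the permutation action on these six points; as the point at infinity is rational, it factors through a point stabilizer $S_5\subset S_6$, and its image is $G_{P_t,F^+}$ acting on the roots of $P_t$. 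In particular $F^+_{P_t}=F^+({\rm Jac}(C_t)[2])$ and $G_{P_t,F^+}\subseteq S_5\subset\Sp_4(\F_2)$.

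Next I would feed in the real multiplication. Over $F^+_1=F^+(\sqrt5)$ the order $\O=\Z[\zeta_5+\zeta_5^{-1}]=\Z[\tfrac{1+\sqrt5}{2}]$ acts on ${\rm Jac}(C_t)$; since $2$ is inert in $\Q(\sqrt5)$, $\O/2\O\cong\F_4$ and ${\rm Jac}(C_t)[2]$ becomes a rank-$2$ $\F_4$-module on which $G_{F^+_1}$ acts $\F_4$-linearly, and (the Rosati involution being trivial on the totally real $\O$) the Weil pairing descends to an $\F_4$-alternating perfect pairing, cf.\ \cite{Ribet}. Hence the image of $G_{F^+_1}$ lies in $\Sp_2(\F_4)=\SL_2(\F_4)\cong A_5$. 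A direct check shows $\SL_2(\F_4)$ is transitive on the $15$ non-zero vectors of $\F_4^2$, so its image in $S_6$ is transitive on the $15$ transpositions; this $A_5$ therefore lies in no point stabilizer of $S_6$, i.e.\ it is the transitive-on-six-points copy, and consequently $G_{P_t,F^+_1}\subseteq S_5\cap\SL_2(\F_4)$, which has order $|A_5|/6=10$ (the stabilizer of a single point in the transitive $A_5$) and so is $\cong D_{10}$. Thus $G_{P_t,F^+_1}\subseteq D_{10}$, and since $[F^+_{P_t}\cap F^+_1:F^+]\le2$, the group $G_{P_t,F^+}$ is an extension of a group of order $\le2$ by a subgroup of $D_{10}$; in particular it is solvable of order $\le20$, which is (1), and (2) follows at once because the transitive solvable subgroups of $S_5$ are precisely $C_5$, $D_{10}$, and $F_{20}$.

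For (3), if $\sqrt5\notin F^+$ then ${\rm disc}(P_t)=2^8\,5^5\,t^{40}(1-t^5)^2=5\cdot\bigl(2^4\,5^2\,t^{20}(1-t^5)\bigr)^2$ is not a square in $F^+$, so $G_{P_t,F^+}\not\subseteq A_5$; among $C_5,D_{10},F_{20}$ only $F_{20}$ fails to lie in $A_5$ (it contains a $4$-cycle), whence $G_{P_t,F^+}\cong F_{20}$. For (4), over $F^+_1$ that discriminant is a square, so $G_{P_t,F^+_1}\subseteq A_5$ and $G_{P_t,F^+_1}\in\{C_5,D_{10}\}$; I would exclude $C_5$ by a real-root count. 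From the identity $P_t(x)=4(t^5-1)x^5+t^6(5x^2+5tx+t^2)^2$, the substitution $x=t/u$ shows that the roots of $P_t$ are exactly $\{t/u:\ \phi(u)=4t^{-5}-4\}$, where $\phi(u):=u(u^2+5u+5)^2$; since $t$ is real under $\iota$, $P_t$ and $\phi(u)=4t^{-5}-4$ have the same number of real roots. Now $\phi'(u)=5(u^2+5u+5)(u^2+3u+1)$, with $\phi$ vanishing (to order $2$) on the roots of $u^2+5u+5$ and equal to $-4$ on the roots of $u^2+3u+1$; an elementary inspection of the real graph of $\phi$ gives that $\phi(u)=v$ has exactly one real solution whenever $v>0$ or $v<-4$. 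Under $\iota$ we have $v=4\iota(t)^{-5}-4>0$ if $0<\iota(t)<1$ and $v<-4$ if $\iota(t)<0$, so for $\iota(t)<1$ the polynomial $P_t$ has a unique real root; hence the complex conjugation $c$ attached to $\iota$ acts on the five roots as a product of two transpositions, a non-trivial involution in $G_{P_t,F^+_1}$. This excludes $C_5$, so $G_{P_t,F^+_1}\cong D_{10}$; and since $D_{10}\subset A_5$, every order-$2$ element of $G_{P_t,F^+_1}$ is an even involution of $S_5$, i.e.\ of type $(2,2)$.

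The step that will demand genuine care is the group theory in the second paragraph: that the copy of $\SL_2(\F_4)$ cut out by the real multiplication meets the point stabilizer $S_5$ in exactly $D_{10}$ inside $\Sp_4(\F_2)\cong S_6$ — equivalently, that it is the transitive and not the intransitive $A_5\subset S_6$. Once this is in place, the rest (the list of transitive subgroups of $S_5$, the discriminant identity, and the real-variable analysis of the fixed map $\phi$) is routine.
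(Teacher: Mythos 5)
Your parts (3) and (4) essentially reproduce the paper's own argument: for (3) the observation that ${\rm disc}(P_t)=2^8 5^5 t^{40}(1-t^5)^2$ is $5$ times a square, so $G_{P_t,F^+}\not\subseteq A_5$ exactly when $\sqrt5\notin F^+$; and for (4) the same reciprocal substitution (your $\phi(u)=u(u^2+5u+5)^2$ is the paper's $Q_t$ up to the additive constant $4-4t^{-5}$) with the same critical-value computation, yielding exactly one real root when $\iota(t)<1$ and hence a $(2,2)$-type complex conjugation.

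Parts (1) and (2) are where you genuinely diverge from the paper, and where there is a gap. The paper proves solvability by applying Dummit's resolvent-sextic criterion to $P_t$ (a symbolic computation checked in Mathematica) and then quotes the list of transitive solvable subgroups of $S_5$; you instead want to trap the image of $G_{F^+_1}$ inside $\SL_2(\F_4)\cong A_5$ via real multiplication and intersect with the point-stabilizer $S_5\subset S_6\cong \Sp_4(\F_2)$. The group theory you single out as the delicate step is in fact fine: any field-extension subgroup $\SL_2(\F_4)\subset\Sp_4(\F_2)$ is transitive on the $15$ nonzero vectors, i.e.\ on the $15$ transpositions of $S_6$, so it is the transitive $A_5$ and meets a point stabilizer in a group of order $10$. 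The real problem is upstream: to make ${\rm Jac}(C_t)[2]$ a free rank-$2$ module over $\O/2\O\cong\F_4$ you need the \emph{maximal order} $\Z[\tfrac{1+\sqrt5}{2}]$ to act on ${\rm Jac}(C_t)$ \emph{itself}, over $F^+_1$. What the paper actually records is weaker: $\iota^*+(\iota^{-1})^*$ is an endomorphism (correspondence) living on $\mathcal A_t$ compatibly with $\iota_1$, and ${\rm Jac}(C_t)$ is only \emph{isogenous} over $F^+$ to ${\rm Jac}(\mathcal A_t/\langle\iota_1\rangle)$; this gives $\Q(\sqrt5)\subseteq{\rm End}^0_{F^+_1}({\rm Jac}(C_t))$ and nothing integral. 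An isogeny (of uncontrolled, possibly even, degree) need not transport the integral endomorphism, and even an integral action by a non-maximal order of even conductor would give $\O/2\O\not\cong\F_4$ and destroy the $\F_4$-structure on the $2$-torsion. Without this input the containment of the image in $\SL_2(\F_4)$, and with it your proofs of (1) and (2), is unsupported; the paper's computational route via the resolvent sextic avoids the issue entirely.
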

\begin{proof}
The first claim follows from \cite[Theorem 1]{Du}. Here, we used the Mathematica version 12.1 to check it. 
The second claim follows from the classification of solvable subgroups inside $S_5$ whose 
orders are divisible by 5. The third claim follows from the criterion that 
$G_{P_t,F^+}$ is $C_5$ or $D_{10}$ if and only if 
the discriminant of $P_t$ is square. 

For the fourth claim, put 
\begin{equation}\label{Qt}
Q_t(x):=t^{-10}x^5 P_t(t x^{-1})=
x^5+10 x^4+35 x^3+50 x^2+25 x+4 - 4t^{-5}
\end{equation}
with the discriminant $2^8\cdot 5^5\cdot t^{-10}(1-t^{-5})^2$. 
Then, $Q'_t(x)=5 (x^2 + 3 x + 1) (x^2 + 5 x + 5)$ 
and the extreme values are given by $4(1-t^{-5})$ for the roots of $x^2 + 5 x + 5$ 
and $-4t^{-5}$ for the roots of $x^2 + 3 x + 1$. By assumption, $P_t$ contains four complex 
roots, none of which are real numbers. Therefore, the Galois group contains 
an element of type $(2,2)$ as an element of $S_5$. 
The claim follows from it with the previous argument. 

Finally, we note that $P_t$ is irreducible over $F^+$ if and only if  $P_t$ is irreducible over $F^+_1:=F^+(\sqrt{5})$. 
In fact, this follows from that $P_t$ is of odd degree.  
Suppose ${\rm Jac(C_t)}$ is not simple over $F^+_1$ so that it is isogenous over $F^+_1$ to 
the product of two elliptic curves over $F^+_1$. The mod 2 Galois representation 
attached to ${\rm Jac}(C_t)[2]$ contains an element of order 5 since $P_t$ is irreducible over $F^+_1$ 
as above. However, any mod 2 Galois representations arising from elliptic curves over $F^+_1$ can not have 
elements of order 5. The claim follows. 
\end{proof}

For each rational prime $\ell$, let $\rho_{C_t,\ell}:G_{F^+}\lra \GSp_4(\Z_\ell)$ be the $\ell$-adic Galois representation attached to $C_t$ and $\br_{C_t,\ell}$ be its reduction modulo $\ell$. 
Assume $P_t$ is irreducible over $F^+$. By Lemma \ref{Pt}-(5), ${\rm Jac}(C_t)$ is $F^+_1$-simple so that it is 
an $F^+_1$-simple abelian surface of $\GL_2$-type over $F^+_1$ with 
the real multiplication by $\Q(\sqrt{5})$. 
Thus, $V_\ell({\rm Jac}(C_t))$ is of rank 2 over $\Q_\ell\otimes_{\Q}\Q(\sqrt{5})$. 
Then, we have a decomposition  $\rho_{C_t,\ell}\otimes \Q(\sqrt{5})\simeq \ds\prod_{\la|\ell}
\rho_{C_t,\la}$ where $\la$ runs over all finite place of $\Q(\sqrt{5})$ above $\ell$ and 
$\rho_{C_t,\la}:G_{F^+}\lra \GL_2(\Q(\sqrt{5})_\la)$ is the corresponding Galois representation 
for $\la|\ell$  
(cf. see \cite[Section 3]{Ribet1} and note that the argument therein is easily extended to our case). 
We denote by $\br_{C_t,\la}$ the reduction of $\rho_{C_t,\la}$ modulo $\la$.

\begin{thm}\label{non-pri}Let $t\in F^+\setminus\{1\}$. 
Assume $P_t$  is irreducible over $F^+$ and $\iota(t)<1$ for any 
$\iota:F^+\hookrightarrow \R$. Further assume that $\ord_v(t)<-\frac{4}{5}$ 
for each finite place $v$ of $F^+$ above 2.
Then, for any rational prime $\ell$, the $L$-function $L(s,\rho_{C_t,\ell})$ is extended to the whole space of $\C$ as an entire function in $s$.  
\end{thm}
\begin{proof}We may prove the claim for $\ell=2$. 
Let $\la$ be the unique finite place of $\Q(\sqrt{5})$ above 2. Put $F^+_1:=F^+(\sqrt{5})$.  
By Lemma \ref{Pt}-(5), ${\rm Jac}(C_t)$  is $F^+_1$-simple and of $GL_2$-type  
as discussed before, we have the $\la$-adic Galois representation 
$$\rho_{C_t,\la}:G_{F^+}\lra \GL_2(\Q_{2^2})$$
We denote by $\br_{C_t,\la}:G_{F^+_1}\lra \GL_2(\F_4)$ its reduction modulo $\la$. 
By Lemma \ref{Pt}-(3), the image of $\br_{C_t,\la}$ is isomorphic to $D_{10}$. 
Further, by Lemma \ref{Pt}-(4),  $\br_{C_t,\la}$ is strongly residually odd, thus it is also absolutely irreducible. 
Notice that the extension $L/F^+$ corresponding to 
 $\Z/5\Z\subset D_{10}={\rm Im}(\rho_{C_t,\la}|_{G_{F^+_1}})$ is obviously non-CM. 

Let $v$ be a finite place of $F^+$ above 2. By assumption on $v$, 
the reduction type of $C_t$ at $v$ is of type (VII) by \cite[Th\'eor\`eme 1]{Liu} 
(this can be checked easily by computing invariants in \cite{Liu} for $y^2=Q_t(x)$). 
By the argument for \cite[p.215, line 12-16]{Liu}, $C_t$ is potentially semi-stable. 
More precisely, the special fiber of the N\'eron model over ${\rm Spec}\hspace{0.5mm}\Z^{{\rm ur}}_2$ of 
${\rm Jac}(C_t)$ has $\mathbb{G}^2_m/\bF_2$ as the connected component. 
Since $\br_{C_t,\la}$ is potential automorphy (cf. \cite{Ta1}) and by using the local-global compatibility 
(cf. \cite{Skinner}), we see that 
 $\rho_{C_t,\la}|_{G_{F^+_v}}$ is potentially ordinary. 
 Therefore, we can find a totally 
 real solvable extension $M/F^+$  such that 
 \begin{itemize}
 \item $M/F^+$ is Galois;
 \item $M$ is linearly disjoint from $\overline{F^+}^{{\rm Ker}(\br_{C_t,\la}}$ over $F^+_1$;
 \item $\rho_{C_t,\la}|_{G_M}$  is ordinary.
 \item The extension $ML/M$ corresponds to 
 $\Z/5\Z\subset D_{10}={\rm Im}(\rho_{C_t,\la}|_{G_{M}})$ and it is also non-CM .  
 \end{itemize} 
 Thus, we can apply \cite[p.1237, THEOREM]{Allen} 
 to conclude automorphy of $(\rho_{C_t,\la}|_{G_M})$.
The claim follows by applying \cite[p.215, Theorem 6.2]{AC}. 
\end{proof}

Combining this result with Theorem \ref{autoDwork}, we obtain the following:
\begin{thm}\label{HW}Let $F^+$ be a totally real field and $t\in F^+\setminus\{1\}$. 
Suppose the following conditions:
\begin{enumerate}
\item  $\ord_v(t)<-\frac{4}{5}$ for each finte place $v$ of $F^+$ above 2: 
 \item  the Galois group of $f_t$ contains $A_5$;
 \item $\iota(t)<1$ for any $\iota:F^+\hookrightarrow \R$;
\item   $P_t$ is irreducible over $F^+$;
\end{enumerate}
Then, the strong Hasse-Weil conjecture for the middle cohomology of  $Y_t$ is true. Hence, for any rational prime $\ell$, the $L$-function 
$L(s,H^3_{\text{\'et}}(Y_{t,\overline{F^+}},\Q_\ell))$ is extended to the whole space 
of the complex numbers as an entire function in $s$. 
\end{thm}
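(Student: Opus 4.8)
The plan is to read off $L\bigl(s,H^3_{\text{\'et}}(Y_{t,\overline{F^+}},\Q_\ell)\bigr)$ from the decomposition of Proposition \ref{AtBt} and to treat the two resulting pieces with the automorphy results already proved. First note that conditions (2), (3), (4) of the theorem are exactly the hypotheses of Theorem \ref{autoDwork}, and conditions (1), (5), (6) are exactly the hypotheses of Theorem \ref{non-pri}; in particular $t\neq 0$, so (\ref{dtotal}) gives, as $\Q_\ell[G_{F^+}]$-modules,
\[
H^3_{\text{\'et}}(Y_{t,\overline{F^+}},\Q_\ell)^{{\rm ss}}\simeq H^3_{\text{\'et}}(W_{t,\overline{F^+}},\Q_\ell)^{{\rm ss}}\oplus\bigoplus^{50}H^1_{\text{\'et}}(C_{t,\overline{F^+}},\Q_\ell)(-1),
\]
and hence
\[
L\bigl(s,H^3_{\text{\'et}}(Y_{t,\overline{F^+}},\Q_\ell)^{{\rm ss}}\bigr)=L\bigl(s,H^3_{\text{\'et}}(W_{t,\overline{F^+}},\Q_\ell)^{{\rm ss}}\bigr)\cdot L\bigl(s,H^1_{\text{\'et}}(C_{t,\overline{F^+}},\Q_\ell)(-1)\bigr)^{50}.
\]

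For the first factor, Theorem \ref{autoDwork} shows the primitive part $H^3_{\text{\'et}}(W_{t,\overline{F^+}},\Q_\ell)$ is automorphic over $F^+$ for every $\ell$, arising from a cuspidal Hilbert--Siegel modular form $h$ which is a genuine form or a symmetric cubic lift; in either case its spin $L$-function is a product of entire automorphic $L$-functions, so this factor is entire. For the second factor, $H^1_{\text{\'et}}(C_{t,\overline{F^+}},\Q_\ell)$ is semisimple and, up to a Tate twist, is the representation $\rho_{C_t,\ell}$ of the genus-two curve $C_t$ of (\ref{Ct}); Theorem \ref{non-pri} says $L(s,\rho_{C_t,\ell})$ is entire, so (after the corresponding shift of variable) $L\bigl(s,H^1_{\text{\'et}}(C_{t,\overline{F^+}},\Q_\ell)(-1)\bigr)$ is entire. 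As a finite product of entire functions, $L\bigl(s,H^3_{\text{\'et}}(Y_{t,\overline{F^+}},\Q_\ell)^{{\rm ss}}\bigr)$ is entire.

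It remains to pass from the semisimplification to $H^3_{\text{\'et}}(Y_{t,\overline{F^+}},\Q_\ell)$ itself. Writing $V=H^3_{\text{\'et}}(Y_{t,\overline{F^+}},\Q_\ell)$, the two $L$-functions have the same Euler factor at every $v$ where $V$ is unramified, and at each of the finitely many remaining $v$ a composition-series argument gives $\dim V^{I_v}\le\dim (V^{{\rm ss}})^{I_v}$, so $\det(1-q_v^{-s}{\rm Frob}_v\mid V^{I_v})$ divides $\det(1-q_v^{-s}{\rm Frob}_v\mid (V^{{\rm ss}})^{I_v})$ in $\C[q_v^{-s}]$; hence $L_v(s,V)=L_v(s,V^{{\rm ss}})\cdot Q_v(q_v^{-s})$ with $Q_v$ a polynomial. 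Thus $L(s,V)$ equals the entire function $L(s,V^{{\rm ss}})$ times finitely many entire factors $Q_v(q_v^{-s})$, so it is entire; the functional equation follows in the same way from those of the constituents. This proves the strong Hasse--Weil conjecture for the middle cohomology of $Y_t$.

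The analytic content of the statement lies entirely in Theorems \ref{autoDwork} and \ref{non-pri} --- which in turn rest on the automorphy lifting theorem (Theorem \ref{main1}) together with the mod $2$ image computations of Sections \ref{DFI}--\ref{DFII}, and on known $\GL_2$-type automorphy results, respectively. No serious obstacle arises in the present deduction; the only point deserving care is that passing from $V^{{\rm ss}}$ to $V$ merely \emph{multiplies in} finitely many polynomial Euler factors, so that entireness is not destroyed.
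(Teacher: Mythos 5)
Your proposal is correct and follows exactly the paper's route: the paper's proof is simply the combination of Theorem \ref{autoDwork} (primitive part) and Theorem \ref{non-pri} (non-primitive part) through the decomposition of Proposition \ref{AtBt}. Your additional remark that passing from $H^3_{\text{\'et}}(Y_{t,\overline{F^+}},\Q_\ell)^{{\rm ss}}$ back to the representation itself only multiplies in finitely many polynomial Euler factors is a detail the paper leaves implicit, and you handle it correctly.
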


\begin{rmk}\label{irredPt}Recall the polynomial $Q_t$ in $($\ref{Qt}$)$. 
When we view $t$ as a parameter, the affine curve over $\Q$ defined by $f(x,t):=Q_t(x)=0$ has 
an model of hyperelliptic curve of genus 2. In fact, by substituting 
$$(x,t)\mapsto (X,Y):=\Bigg(
\frac{t^2(1 + 3 x + x^2)}{2}, \frac{5t^5 (1 + 6 x + 11 x^2 + 6 x^3 + x^4 - 
   2 t^{-5})}{2}
\Bigg),$$
we have an hyperelliptic model $D_1:Y^2=5(1+8X^5)$. For a number field $K$, if $Q_t(x)$ has a factor of degree 1 
over $K$ for some $t\in K$ with $t^5-1\neq 0$, then it yields a $K$-rational point on $D_1$.  
Suppose $f(x,t)=(x^2+a_1x+a_2)(x^3+a_3x^2+a_4x+a_5)$ over $\Z[t^{-1}](\subset \Q(t))$. 
By deleting $a_2,a_3,a_4,a_5$, we the equation $h(t,a_1)=0$ which yields a union of two 
singular geometrically irreducible curves $D^+_2$ and $D^-_2$ of geometric 
genus 2 over $\Q(\sqrt{5})$ $($hence the curve defined by $h$ is irreducible over $\Q$ but 
reducible over $\Q(\sqrt{5}))$. In fact, 
$h(x+3,\frac{y}{2})=f^+_2(x,y)f^-_2(x,y)$ where 
$$f^{\pm}_2(x,y):=20 a (-2 + x)^2 (-1 + x) x^2 + 
 4 x^2 (-50 + 90 x - 55 x^2 + 11 x^3) + y^5,\ a=\pm\sqrt{5}$$
 and $D^\pm_2$ is defined by $f^\pm_2(x,y)=0$. 
Substituting 
$$(X,Y):=\Bigg(\frac{x (-5 + a + 2 x)}{2 y^2}, -\frac{
 5 (240 x^2 - 160 x^3 + 32 a x^3 + 32 x^4 - 7 y^5 + 
    a (-80  x^2 + 32  x^3 + 3  y^5))}{64 y^5}\Bigg),$$
$D^\pm_2$ is birationally equivalent over $\Q(\sqrt{5})$ to 
$\widetilde{D}^\pm_2:Y^2=\ds\frac{1}{8}(5a + 15)X^5 +\frac{ 1}{2048}(-105a +235)$. 
For a given number field $K$, if $P_t$ is reducible over $K(\sqrt{5})$, then $t$ gives rise to a rational point in  
$D_1(K(\sqrt{5}))\cup \widetilde{D}^+_2(K(\sqrt{5}))\cup \widetilde{D}^-_2(K(\sqrt{5}))$.  
It follows from this with Falting's theorem \cite{Fal} that $P_t$ is irreducible over $K(\sqrt{5})$ for all but finitely many $t\in K(\sqrt{5})\setminus\{0,1\}$. 

It is an interesting problem to determine rational points of these curves over 
$\Q(\sqrt{5})$. 
\end{rmk}

\section{Appendix A }
Recall the notation in Section \ref{DFII}. Let $K/\Q$ be a finite extension. 
Let $t\in K$ with $t^7-1\neq 0$.  
 In this appendix, we determine the residual image of 
$\br_{t,2}:G_K\lra \GSp_6(\F_2)=\Sp_6(\F_2)$  (when $n=6$) by using Proposition \ref{concrete1} and 
Shioda's theory for 28 bitangent lines of smooth quartic plane curves \cite{Shioda}. 
We introduce the plane quartic defined by  
\begin{equation}\label{quartic}
C_t:4xy^3 + x^3z - 7txy^2z + 2yz^3=0.
\end{equation}
It is smooth if and only if $t^7-1\neq 0$.  
Let $\br_{C_t,2}:G_K\lra {\rm Aut}_{\F_2}({\rm Jac}(C_t)[2](\overline{K}))$ be the 
mod 2 representation associated to $C_t$. 
\begin{prop}\label{plane}Let $t\in K$ with $t^7-1\neq 0$. 
Then, $(\br_{t,2})^{{\rm ss}}\simeq (\br_{C_t,2})^{{\rm ss}}$.
\end{prop}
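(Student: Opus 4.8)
The plan is to identify both semisimple mod $2$ Galois representations with the same permutation-theoretic object, namely the natural action of the relevant Galois group on the $28$ bitangents of the smooth plane quartic $C_t$. The key point is that for a smooth plane quartic $C$ over a field $K$ of characteristic $\neq 2$, the $2$-torsion ${\rm Jac}(C)[2](\overline{K})$ carries a well-known combinatorial description: the $63$ nonzero points correspond to the $63$ "syzygetic" structures, and $\br_{C_t,2}$ is, via Shioda's theory, governed by the Galois action on the set $B_t$ of $28$ bitangent lines of $C_t$. So the first step is to recall (from \cite{Shioda}) the isomorphism of $G_K$-modules between ${\rm Jac}(C_t)[2]$ and the standard $6$-dimensional $\F_2$-representation built from the $28$ bitangents — concretely, the quotient $W/L$ construction analogous to $\theta_{28}$ but with $28$ letters — so that $\br_{C_t,2}$ factors through ${\rm Gal}(L_t/K) \hookrightarrow \Sp_6(\F_2)$ where $L_t$ is the field of definition of the bitangents. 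One then notes $\Sp_6(\F_2)\simeq S_8$ acts on the $28$ bitangents as $S_8$ acts on the $2$-subsets of $\{1,\dots,8\}$.

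The second step is to produce the matching geometric object on the Dwork side. Here I would invoke the analysis already carried out in the body of the paper: by Proposition~\ref{concrete1} we have ${\rm MD}_6(\F_2)\simeq S_8\simeq {\rm O}^+_6(\F_2)$, and the monodromy representation $\rho_{t,\mathrm{mod}\ \la}$ for $N=7$, $n=6$ factors through this $S_8$. The content to be verified is that the specific quartic $C_t$ in \eqref{quartic} is the one whose bitangent configuration realizes the Dwork septic fiber $W_t$ (equivalently $Y_t$ with $N=7$): that is, I would exhibit an explicit geometric correspondence — presumably the quotient of the septic Dwork threefold (or its mirror) by the relevant finite group action, landing on a rational curve over whose function field the $28$-bitangent cover of $C_t$ appears — so that $H^5_{\text{\'et}}(W_{t,\overline K},\Q_2)$ (primitive part, reduced mod $2$) and $H^1_{\text{\'et}}({\rm Jac}(C_t),\Q_2)\otimes\F_2$ have isomorphic semisimplifications as $G_K$-modules. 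Concretely this amounts to checking that the trinomial $f_t(x)=6x^7-7tx^6+1$ (the $n=6$ case of \eqref{tri}), or rather its reciprocal, cuts out the same splitting field as the octic resolvent controlling the bitangents of $C_t$; one matches discriminants $(-1)^{3}6^6 7^7(1-t^7)$ against the discriminant of $C_t$ (which degenerates exactly at $t^7=1$) and then matches Galois groups inside $S_8$.

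The third step is the trace comparison: using the "mod $2$" point-counting technique of Section~\ref{Reciprocity} (Lemmas~\ref{ZZ}, \ref{Zn}, \ref{YW}, \ref{reci}, suitably adapted to $n=6$), one shows $\tr\big(\br_{t,2}({\rm Frob}_v)\big)$ and $\tr\big(\br_{C_t,2}({\rm Frob}_v)\big)$ agree mod $2$ for all but finitely many $v$, by expressing both as $\sharp$(something)$+1 \bmod 2$ in terms of $n(f_t,k_v)$ and of the number of $k_v$-rational bitangents; then Brauer–Nesbitt together with Chebotarev forces $(\br_{t,2})^{\rm ss}\simeq(\br_{C_t,2})^{\rm ss}$. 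The main obstacle I expect is the second step — pinning down the precise birational/correspondence link between the septic Dwork fiber and the particular quartic \eqref{quartic}, i.e.\ proving that the $28$-bitangent cover of $C_t$ and the mod-$2$ Dwork monodromy cover for $N=7$ are the same $S_8$-cover of $\mathbb{P}^1_t$. Once one knows both are connected $S_8$-covers ramified over the same locus $\{t^7=1\}\cup\{\infty\}$ with matching inertia, a rigidity/Hilbert-irreducibility argument (as in the proof of Theorem~\ref{MD2power}) identifies them, and the representation-theoretic conclusion follows since $S_8$ has a unique $6$-dimensional $\F_2$-representation of this type.
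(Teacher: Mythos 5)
There is a genuine gap. Your high-level skeleton for the third step (compare traces of Frobenius mod $2$ at almost all places, then conclude by Chebotarev and Brauer--Nesbitt) is indeed the paper's strategy, but the actual mechanism that makes the comparison work is missing from your proposal, and the two substitutes you offer do not close it. First, you propose to express $\tr(\br_{t,2}(\mathrm{Frob}_v))$ via $n(f_t,k_v)$ by "adapting" Lemma \ref{Zn}; that lemma genuinely requires $n$ to be a power of $2$, because it counts fixed points of the full cyclic shift $(12\cdots n)$, whose orbits on $Z_t(k_v)$ have odd cardinality $3$ when $n=6$. Second, the entire second step of your plan --- identifying the $28$-bitangent cover of $C_t$ with the mod-$2$ Dwork monodromy cover for $N=7$ by matching discriminants, inertia, and invoking rigidity --- is exactly the hard content you would need to supply, and you explicitly leave it open ("the main obstacle I expect"). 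No such identification of covers is needed, and Shioda's bitangent theory enters the paper only afterwards (to compute the image via the polynomial $\Psi$ in Theorem \ref{image8}), not in the proof of this proposition.

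The idea you are missing is elementary and purely computational: act on the mirror hypersurface $Z_t\subset\mathbb{G}_m^6$ by the single permutation $\tau=(1234)(56)$ of the coordinates. Every $\langle\tau\rangle$-orbit on $Z_t(\F_q)$ has cardinality $1$, $2$, or $4$, hence even unless the point is fixed, i.e.\ unless $x_1=\cdots=x_4=x$ and $x_5=x_6=y$. Therefore $\sharp Z_t(\F_q)$ is congruent mod $2$ to the number of solutions of $4x+2y+\tfrac{1}{x^4y^2}=7t$ in $(\F_q^\times)^2$, and the substitution $(x,y)\mapsto(x,\tfrac{1}{xy})$ followed by homogenization turns this equation into precisely the quartic \eqref{quartic}, giving $\sharp Z_t(\F_q)\equiv 1+\sharp C_t(\F_q)\bmod 2$. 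Combining with Lemmas \ref{ZZ} and \ref{YW} (which do hold for all even $n$) yields $\tr(\br_{t,2}(\mathrm{Frob}_v))\equiv\tr(\br_{C_t,2}(\mathrm{Frob}_v))$, and Chebotarev finishes the proof. So the correct proof is a one-page fixed-point count producing $C_t$ directly from $Z_t$, with no geometric correspondence between the septic threefold and the quartic, no trinomial, and no bitangent combinatorics.
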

\begin{proof}Let $v\nmid 2\cdot 7$ be a finite place of $K$ satisfying $\ord_v(t^7-1)=0$.  
Then, both of $\br_{t,2}$ 
and $\br_{C_t,2}$ are unramified at $v$. Put $q=q_v=\sharp O_K/v$. 
Let $\tau=(1234)(56)\in S_6$ act on $Z_t$ by permutation of the coordinates. 
Then, it is easy to see that the cardinality of the $\tau$-orbit  
$(x_1,x_2,x_3,x_4,x_5,x_6)\in Z_t(\F_q)$ is even unless $x:=x_1=x_2=x_3=x_4$ and $y=y_1=y_2$. 
Thus, we have 
$$\sharp Z_t(\F_q)\equiv \sharp\Big\{(x,y)\in (\F^\times_q)^2\ \Big|\ 
4x+2y+\frac{1}{x^4y^2}-7t=0 \Big\}\ {\rm mod}\ 2.$$
Replacing $(x,y)$ with $(x,\frac{1}{xy})$ and homogenizing as $(x,y)\mapsto (\frac{x}{z},\frac{y}{z})$, 
we see that $\sharp Z_t(\F_q)\equiv 
\sharp\Big\{(x,y)\in (\F^\times_q)^2\ \Big|\ 
4x+2y+\frac{1}{x^4y^2}-7t=0 \Big\}=1+\sharp C_t(\F_q)$ mod 2. 
Applying Lemma \ref{ZZ} and Lemma \ref{YW}, it holds that  
$\sharp W_t(\F_q)\equiv \sharp\overline{Z}_t(\F_q)  \equiv  1+\sharp Z_t(\F_q)$ mod 2. Thus, we have  
${\rm tr}(\br_{t,2})={\rm tr}(\br_{C_t,2})$. The claim follows from the Chebotarev density theorem.  
\end{proof} 

Thus, we may consider $\br_{C_t,2}$ to understand $\br_{t,2}$. 
We replace $[x:y:z]$ with $[x_1:\frac{x_0}{2}:x_2]$ so that 
the definition equation becomes 
$$\frac{1}{2} x_0^3 x_1 - \frac{7}{4} k x_0^2 x_1 x_2 + x_1^3 x_2 + x_0 x_2^3=0.$$
By using Shioda's theory \cite[Section 3, Theorem 8]{Shioda}, 
$K_{\br_{C_t,2}}:=\overline{K}^{{\rm Ker}(\br_{C_t,2})}$ is given by 
the decomposition field of the polynomial $\Psi\in \Z[t,x]$ defined by 
$$\Psi(t,x):=
16 + 2744 t^3 x^2 - 16352 t x^3 + 117649 t^6 x^4 - 172872 t^4 x^5 + 
 512344 t^2 x^6 + 2676352 x^7$$
 $$ - 4537890 t^5 x^8 + 39126696 t^3 x^9 - 
 26289088 t x^{10} + 43181985 t^4 x^{12} - 10682784 t^2 x^{13}$$
 $$ +  5597568 x^{14} + 11124176 t^3 x^{16} + 1104768 t x^{17} + 
 493920 t^2 x^{20} + 489984 x^{21} - 26880 t x^{24} + 256 x^{28}.$$ 
By Magma \cite{BCP}, when we view $t$ as a variable, we can check the affine curve over $\Q$ defined by  
$\Psi(t,x)=0\subset \mathbb{A}^2_{t,x}$ is absolutely irreducible (and it has geometric genus 6).  
 By \cite[p.175, Corollary 10.2.2-(b)]{FJ}, the splitting field  $M_\Psi$ of $\Psi$ over $\Q(t)$ 
is a regular extension over $\Q(t)$. By regularity, 
${\rm Gal}(K M_\Psi/K(t))\simeq  {\rm Gal}(M_\Psi/\Q(t))$ for any number field $K$. 
The specialization of $M_\Psi$ at $t=2$ has $S_8$-extension over $\Q$ by Magma, we see  
${\rm Gal}(M_\Psi/\Q(t))$ contains a subgroup which is isomorphic to $S_8$. Thus, there exists a Zariski dense set $H_K\subset K$ such that $\br_{C_t,2}$ is absolutely irreducible 
for any $t\in H_K$ (here we also use \cite[Theorem 1.1]{Wagner}).  
Thus, $\br_{t,2}\simeq \br_{C_t,2}$ for any $t\in H_K$ by Proposition \ref{plane}. 
Since $\MD_6(\F_2)=S_8$ by Proposition \ref{concrete1}, we see easily that   
${\rm Gal}(F M_\Psi/F(t))\simeq S_8$ for a sufficiently large number field $F$. By regularity, 
$S_8= {\rm Gal}(F M_\Psi/F(t))\simeq  {\rm Gal}(M_\Psi/\Q(t))$.  
Summing up, we have the following
\begin{thm}\label{image8}Keep the notation being as above. It holds that  
\begin{enumerate}
\item For each $t\in K$ with $t^7-1\neq 0$, it holds that ${\rm Im}(\br_{C_t,2})\simeq {\rm Gal}(K_{\Psi(t,x)}/K)$ and it is isomorphic to  
 a subgroup of $S_8$. 
 \item There exists a Zariski dense subset $H_K$ of $K$ such that 
 ${\rm Im}(\br_{C_t,2})\simeq S_8$ for any $t\in H_K$ with $t(t^7-1)\neq 0$. 
 \item Suppose ${\rm Im}(\br_{C_t,2})\simeq S_8$. Then, it factors through the 
 standard representation $\theta_8$ and thus, $\br_{C_t,2}$ is absolutely irreducible. 
In particular, $\br_{t,2}\simeq \br_{C_t,2}$. 
\end{enumerate}
\end{thm}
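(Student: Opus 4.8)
The strategy is to transfer the problem to the plane quartic $C_t$ of (\ref{quartic}) through Proposition \ref{plane}, to recognise the resulting $2$-torsion extension by Shioda's bitangent theory, and then to compute the generic Galois group and specialise.

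First I would invoke Shioda's description of the $28$ bitangents of a smooth plane quartic \cite[Section 3, Theorem 8]{Shioda}. For $t\in K$ with $t^7\neq 1$ the curve $C_t$ is smooth, $\br_{C_t,2}$ acts faithfully on ${\rm Jac}(C_t)[2]$, and its fixed field $K_{\br_{C_t,2}}=\overline{K}^{{\rm Ker}(\br_{C_t,2})}$ is the splitting field $K_{\Psi(t,x)}$ of the degree-$28$ polynomial $\Psi(t,x)\in K[x]$, which is separable since $C_t$ is smooth; hence ${\rm Im}(\br_{C_t,2})\simeq {\rm Gal}(K_{\Psi(t,x)}/K)$. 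Viewing $t$ as an indeterminate, a Magma \cite{BCP} computation that the plane curve $\Psi(t,x)=0$ over $\Q$ is geometrically irreducible gives, via \cite[p.175, Corollary 10.2.2-(b)]{FJ}, that the splitting field $M_\Psi$ of $\Psi$ over $\Q(t)$ is a regular extension, so ${\rm Gal}(FM_\Psi/F(t))\cong {\rm Gal}(M_\Psi/\Q(t))$ for every number field $F$. Specialising $\Psi$ at $t=2$ and checking with Magma that the resulting splitting field over $\Q$ is an $S_8$-extension shows ${\rm Gal}(M_\Psi/\Q(t))$ contains a copy of $S_8$; conversely, at a generic point over a sufficiently large $F$ one has $\br_{t,2}\simeq\br_{C_t,2}$ (Proposition \ref{plane}, valid there since $\br_{C_t,2}$ is then irreducible) while ${\rm Im}(\br_{t,2})\subseteq\Sp_6(\F_2)$ and ${\rm Im}(\br_{t,2})\cap\Sp_6(\F_2)\subseteq\MD_6(\F_2)=S_8$ by Proposition \ref{concrete1}, forcing ${\rm Gal}(M_\Psi/\Q(t))\hookrightarrow S_8$. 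Therefore ${\rm Gal}(M_\Psi/\Q(t))\cong S_8$, and since $\Psi(t,x)$ stays separable for every $t\in K$ with $t^7\neq 1$, the specialisation theorem for Galois groups yields ${\rm Im}(\br_{C_t,2})\simeq {\rm Gal}(K_{\Psi(t,x)}/K)\hookrightarrow {\rm Gal}(KM_\Psi/K(t))\cong S_8$, which is part (1).

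Part (2) then follows from Hilbert irreducibility: because $M_\Psi/\Q(t)$ is a regular $S_8$-extension, over any number field $K$ there is a Zariski dense set $H_K\subset K$, which we may shrink to avoid the zeros of $t(t^7-1)$ without losing density, such that ${\rm Gal}(K_{\Psi(t,x)}/K)\cong {\rm Gal}(KM_\Psi/K(t))\cong S_8$, i.e.\ ${\rm Im}(\br_{C_t,2})\simeq S_8$. For part (3), assume ${\rm Im}(\br_{C_t,2})\simeq S_8$. Then $\br_{C_t,2}$ is a faithful $6$-dimensional representation of $S_8$ over $\F_2$ with image in $\Sp_6(\F_2)$, so by \cite[Theorem 1.1]{Wagner} it factors, up to conjugacy, through the standard representation $\theta_8:S_8\lra \GSp_6(\F_2)$; in particular $\br_{C_t,2}$ is absolutely irreducible, whence $(\br_{C_t,2})^{{\rm ss}}=\br_{C_t,2}$ and Proposition \ref{plane} upgrades to a genuine isomorphism $(\br_{t,2})^{{\rm ss}}\simeq \br_{C_t,2}$. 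As the right-hand side is irreducible, $\br_{t,2}=(\br_{t,2})^{{\rm ss}}\simeq\br_{C_t,2}$, completing part (3).

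The substance of the argument lies in the first step: Shioda's bitangent theory together with the explicit $\Psi$ yields the identification $K_{\br_{C_t,2}}=K_{\Psi(t,x)}$, and two Magma verifications --- geometric irreducibility of the curve $\Psi(t,x)=0$ and the $S_8$-specialisation at $t=2$ --- feed the computation of the generic Galois group. Once these are granted, parts (1)--(3) are a formal consequence of the regularity of $M_\Psi$, the specialisation theorem, Hilbert irreducibility, Proposition \ref{plane}, Proposition \ref{concrete1}, and Wagner's minimal-dimension theorem; I expect the Magma-assisted generic-group computation and the bookkeeping of base-field changes to be the only delicate points.
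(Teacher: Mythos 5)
Your proposal is correct and follows essentially the same route as the paper: Shioda's bitangent theorem to identify $K_{\br_{C_t,2}}$ with the splitting field of $\Psi$, the two Magma verifications plus \cite[Corollary 10.2.2-(b)]{FJ} to get regularity and a copy of $S_8$ in ${\rm Gal}(M_\Psi/\Q(t))$, the bound $\MD_6(\F_2)=S_8$ from Proposition \ref{concrete1} together with Proposition \ref{plane} for the reverse inclusion, and Wagner's minimal-degree theorem for part (3). The only difference is the order of deduction (you pin down the generic group first and then specialise, whereas the paper establishes the dense set first), which is immaterial.
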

As a byproduct of the proof as above, we have  
\begin{cor} The extension $M_\Psi/\Q(t)$ is a regular $S_8$-extension.
\end{cor}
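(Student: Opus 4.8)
The plan is simply to assemble the facts established in the course of proving Theorem~\ref{image8}. First I would invoke \cite[p.175, Corollary 10.2.2-(b)]{FJ} applied to the affine curve $\Psi(t,x)=0\subset \mathbb{A}^2_{t,x}$: since this curve was checked (by Magma \cite{BCP}) to be absolutely irreducible, the splitting field of $\Psi$ over $\Q(t)$ is a regular extension of $\Q(t)$, i.e. $M_\Psi$ is geometrically connected over $\Q(t)$. Regularity furnishes the key transfer principle used repeatedly in Section~\ref{DFII}: for every number field $F$ one has a canonical isomorphism $\mathrm{Gal}(FM_\Psi/F(t))\stackrel{\sim}{\lra}\mathrm{Gal}(M_\Psi/\Q(t))$.

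Next I would identify $\mathrm{Gal}(M_\Psi/\Q(t))$ with the image of a residual monodromy representation. By Shioda's theory \cite[Section 3, Theorem 8]{Shioda} the roots of $\Psi$ parametrize the $28$ bitangents of the plane quartic $C_t$ of (\ref{quartic}), and the Galois action on them factors through the mod $2$ representation $\br_{C_t,2}:G_{\Q(t)}\lra \Sp_6(\F_2)$ acting on the $28$ bitangents; thus $M_\Psi=\overline{\Q(t)}^{{\rm Ker}(\br_{C_t,2})}$ and $\mathrm{Gal}(M_\Psi/\Q(t))\cong \mathrm{Im}(\br_{C_t,2}|_{G_{\Q(t)}})$. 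By Proposition~\ref{plane} we have $\br_{t,2}\simeq \br_{C_t,2}$, so, after base change to a sufficiently large totally real field $F$ (chosen so that the residual monodromy descends without further shrinking, exactly as in the proof of Theorem~\ref{image8}), the corresponding geometric image is the Dwork monodromy group $\MD_6(\F_2)$ of Definition~\ref{MG2}. By Proposition~\ref{concrete1}, together with the exceptional isomorphism $S_8\simeq {\rm O}^+_6(\F_2)$ recorded there, this group is $S_8$ and it factors through the standard representation $\theta_8$.

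Combining the two steps: $\mathrm{Gal}(FM_\Psi/F(t))\cong S_8$ for such an $F$, and by the regularity transfer principle $\mathrm{Gal}(M_\Psi/\Q(t))\cong S_8$ as well, with $M_\Psi/\Q(t)$ regular. This is precisely the assertion of the corollary. The only inputs here that are not purely formal are the two Magma computations already performed upstream --- the absolute irreducibility of the degree-$28$ curve $\Psi(t,x)=0$, and the explicit determination $\MD_6(\F_2)=S_8$ of Proposition~\ref{concrete1}; everything else is bookkeeping, so there is no genuine obstacle left, and the statement is indeed a byproduct of the preceding argument.
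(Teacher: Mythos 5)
Your overall strategy is the paper's own: the corollary is extracted from exactly the chain of facts assembled in the run-up to Theorem~\ref{image8}, and your treatment of regularity (absolute irreducibility of the curve $\Psi(t,x)=0$ plus \cite[Corollary 10.2.2-(b)]{FJ}) and of the final transfer ${\rm Gal}(FM_\Psi/F(t))\simeq{\rm Gal}(M_\Psi/\Q(t))$ coincide with the text. Where you genuinely depart is in how the group is pinned down as \emph{all} of $S_8$ rather than a proper subgroup. The paper obtains the lower bound from a third Magma computation you omit: the specialization $\Psi(2,x)$ has Galois group $S_8$ over $\Q$, so ${\rm Gal}(M_\Psi/\Q(t))$ contains a copy of $S_8$; together with the upper bound coming from $\MD_6(\F_2)=S_8$ this forces equality. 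You instead write ``by Proposition~\ref{plane} we have $\br_{t,2}\simeq\br_{C_t,2}$'' and conclude that the geometric image of $\br_{C_t,2}$ \emph{is} $\MD_6(\F_2)$. That misquotes Proposition~\ref{plane}, which only gives $(\br_{t,2})^{{\rm ss}}\simeq(\br_{C_t,2})^{{\rm ss}}$, and only at specializations $t\in K$; agreement of semisimplifications does not by itself identify the images, since $\br_{C_t,2}$ could a priori be reducible with a strictly smaller image having the same semisimplified image.

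The gap is repairable, and the repair is essentially what the paper invokes via \cite[Theorem 1.1]{Wagner}: by Propositions~\ref{monod} and~\ref{concrete1} the mirror-family monodromy factors through the standard representation $\theta_8$ of $S_8$, which is absolutely irreducible, so $(\br_{t,2})^{{\rm ss}}=\br_{t,2}$ generically; hence $(\br_{C_t,2})^{{\rm ss}}$ is irreducible, hence $\br_{C_t,2}=(\br_{C_t,2})^{{\rm ss}}$ and the two images really coincide. One must also promote Proposition~\ref{plane} from a statement about specializations to one about the generic representation of the fundamental group (a Chebotarev/Zariski-density argument over the base curve, which the paper effects by producing the dense set $H_K$ of absolutely irreducible specializations). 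With these two points added your argument closes, and it even dispenses with the $t=2$ specialization; as written, however, the step ``the corresponding geometric image is $\MD_6(\F_2)$'' is asserted rather than proved.
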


\section{Appendix B}\label{AppB}We keep the notation in Section \ref{DFII}. 
Let $n\ge 2$ be an even integer. The case $n=2$ is allowed.  
Let $K$ be a number field and pick $t\in K\setminus\{1\}$ such that $t^{n+1}-1\neq 0$. 
Let $\ell$ be a rational prime.  Let $\rho_{t,\ell}:G_K\lra \GSp_n(\Q_\ell)$ be the 
Galois representation attached to $H^{n-1}_{\text{\'et}}(W_{t,\overline{K}},\Q_\ell)$. 
By the excision  theorem in \'etale cohomology, we have 
$$H^{n-1}_{\text{\'et}, c}(Z_{t,\overline{K}},\Q_\ell) 
\stackrel{\alpha}{\lra} 
H^{n-1}_{\text{\'et}}(W_{t,\overline{K}},\Q_\ell)\stackrel{\beta}{\lra} 
H^{n-1}_{\text{\'et}}((W_t\setminus Z_t)_{\overline{K}},\Q_\ell).$$
Notice that $H^{n-1}_{\text{\'et}}(W_{t,\overline{K}},\Q_\ell)$ is pure of odd weight $n-1$. 
On the other hand, $W_t\setminus Z_t$ is described in terms of the toric resolution of 
the stratum of $\overline{Z}_f\setminus Z_f$ (see the proof of Lemma \ref{YW}) and thus,  
$H^{n-1}_{\text{\'et}}((W_t\setminus Z_t)_{\overline{K}},\Q_\ell)$ is a direct sum of 
Tate-twists by construction. 
It yields $\beta=0$ and thus, $\alpha$ is a surjection.
By \cite[Corollary 3.3.5, p.508]{Batyrev} and the comparison theorem, 
$H^{n-1}_{\text{\'et}, c}(Z_{t,\overline{K}},\Q_\ell) $ is of rank $d(\Delta)+n-1=2n$ 
(note that $d(\Delta):={\rm vol}_{N_\R}(\Delta)=n+1$). 
On other hand, by \cite[Theorem 3.4, p.358]{Batyrev0}, 
we have a surjection  
$ H^{n-1}_{\text{\'et}, c}(Z_{t,\overline{K}},\Q_\ell)\lra H^{n-1}_{\text{\'et}, c}(\mathbb{T}_{\overline{K}},\Q_\ell)(1-n)
\simeq \Q^{\oplus n}_\ell$ 
where $\mathbb{T}=\mathbb{G}^n_m$. Since 
$H^{n-1}_{\text{\'et}}(W_{t,\overline{K}},\Q_\ell)$ is pure of weight $n-1$, we have 
$$H^{n-1}_{\text{\'et}, c}(Z_{t,\overline{K}},\Q_\ell)^{{\rm ss}}\simeq 
H^{n-1}_{\text{\'et}}(W_{t,\overline{K}},\Q_\ell)^{{\rm ss}}\oplus \Q^{\oplus n}_\ell.$$
From \cite[Theorem 3.4, p.358]{Batyrev0} and the discussion immediately preceding it, 
we have $H^i_{\text{\'et}, c}(Z_{t,\overline{K}},\Q_\ell)=0$ if $i<n-1$ and 
$H^i_{\text{\'et}, c}(Z_{t,\overline{K}},\Q_\ell)\simeq 
H^i_{\text{\'et}, c}(\mathbb{G}^n_{m,\overline{K}},\Q_\ell)(-n-1)\simeq \Q^{\oplus c_i}_\ell(n-1-i)$, 
$c_i=\dbinom{n}{i+2-n}$ if $i>n-1$.  
Summing up, we have proved the following result:
\begin{thm}\label{pointCount}Keep the notation being as above. 
For each finite place $v\nmid \ell$ of $K$ satisfying $\ord_v(t^{n+1}-1)=0$, it holds that  
\begin{equation}\label{PointsZt}
\sharp Z_t(k_v)=\sum_{i=0}^{2(n-1)} (-1)^i {\rm tr}({\rm Frob}_v |H^i_{\text{\'et}, c}(Z_{t,\overline{K}},\Q_\ell))=\Bigg(\sum_{i=n}^{2n-2}c_i q^{i+1-n}_v\Bigg)-n-{\rm tr}(\rho_{t,\ell}({\rm Frob}_v)).
\end{equation}
\end{thm}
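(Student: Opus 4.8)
The plan is to deduce \eqref{PointsZt} from the Grothendieck--Lefschetz trace formula for $Z_t$ over the residue field $k_v$, together with the description of $H^\bullet_{\text{\'et},c}(Z_{t,\overline{K}},\Q_\ell)$ obtained in the paragraph just above. First I would note that the running hypotheses $v\nmid\ell$, $\ord_v(t^{n+1}-1)=0$ and $v\nmid(n+1)$ (the last being in force anyway, since otherwise $\rho_{t,\ell}$ is not unramified at $v$ and the right-hand side is undefined) force $Z_t$ to have good reduction at $v$: the gradient of $f$ in \eqref{Zt} vanishes in $\mathbb{G}^n_m$ only where $x_1(x_1\cdots x_n)=\cdots=x_n(x_1\cdots x_n)=1$, hence where $x_1=\cdots=x_n=x$ with $x^{n+1}=1$, and on that locus $f=(n+1)(x-t)$, so $Z_t\otimes k_v$ is smooth precisely when $t^{n+1}\neq 1$ in $k_v$. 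Taking a smooth model $\mathcal Z/\O_{K_v}$, the sheaves $R^i(\mathcal Z\to{\rm Spec}\,\O_{K_v})_!\Q_\ell$ are lisse, so comparing generic and special stalks identifies $H^i_{\text{\'et},c}(Z_{t,\overline{K}},\Q_\ell)$ with $H^i_{\text{\'et},c}(Z_{t,\overline{k_v}},\Q_\ell)$ as ${\rm Frob}_v$-modules, and the trace formula gives the first equality.

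For the second equality I would substitute the cohomology recalled above. The vanishing $H^i_{\text{\'et},c}(Z_{t,\overline{K}},\Q_\ell)=0$ for $i<n-1$ kills every term below the middle degree. Since the trace of ${\rm Frob}_v$ is unchanged by semisimplification, the isomorphism $H^{n-1}_{\text{\'et},c}(Z_{t,\overline{K}},\Q_\ell)^{\mathrm{ss}}\simeq H^{n-1}_{\text{\'et}}(W_{t,\overline{K}},\Q_\ell)^{\mathrm{ss}}\oplus\Q_\ell^{\oplus n}$ gives $\tr({\rm Frob}_v\mid H^{n-1}_{\text{\'et},c})=\tr(\rho_{t,\ell}({\rm Frob}_v))+n$, and as $n$ is even the Lefschetz sign $(-1)^{n-1}=-1$ produces the summands $-n-\tr(\rho_{t,\ell}({\rm Frob}_v))$. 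For $n-1<i\le 2n-2$ the identification $H^i_{\text{\'et},c}(Z_{t,\overline{K}},\Q_\ell)\simeq\Q_\ell(n-1-i)^{\oplus c_i}$ with $c_i=\binom{n}{i+2-n}$ shows ${\rm Frob}_v$ acts by $q_v^{i+1-n}$, so these degrees contribute the Tate part $\sum_{i=n}^{2n-2}c_i q_v^{i+1-n}$ of \eqref{PointsZt} (the degrees entering, as always, with the signs $(-1)^i$ of the trace formula); assembling the three pieces gives the claim.

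The real content lies entirely in the input, not in this last step: once one has (i) the good-reduction claim for $Z_t$, (ii) Batyrev's vanishing below the middle degree and the explicit Tate structure of $H^{>n-1}_{\text{\'et},c}(Z_t)$, and (iii) the splitting after semisimplification of $H^{n-1}_{\text{\'et},c}(Z_t)$ into the primitive part of the mirror family plus a trivial rank-$n$ summand, \eqref{PointsZt} is a purely formal consequence of Grothendieck--Lefschetz. Items (ii) and (iii) are exactly what \cite[Theorem 3.4]{Batyrev0}, the excision sequence $H^{n-1}_{\text{\'et},c}(Z_{t,\overline{K}},\Q_\ell)\to H^{n-1}_{\text{\'et}}(W_{t,\overline{K}},\Q_\ell)\to H^{n-1}_{\text{\'et}}((W_t\setminus Z_t)_{\overline{K}},\Q_\ell)$, and the dimension count $\dim H^{n-1}_{\text{\'et},c}(Z_{t,\overline{K}},\Q_\ell)=d(\Delta)+n-1=2n$ already supply. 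So there is no genuinely new difficulty; the only things needing attention are the bookkeeping of the Tate twists $\Q_\ell(n-1-i)$ against the exponents $i+1-n$ of $q_v$, the parity of $n$ in the Lefschetz signs, and keeping the implicit hypothesis $v\nmid(n+1)$ (needed for good reduction, not just $v\nmid\ell$) visible.
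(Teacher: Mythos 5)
Your proposal is correct and is essentially the paper's own argument: the theorem is stated as "summing up" the cohomological facts established in the paragraph above it (vanishing below the middle degree, the Tate structure of $H^{i}_{\text{\'et},c}$ for $i>n-1$, and the semisimplified splitting of $H^{n-1}_{\text{\'et},c}$), and the only remaining step is the Grothendieck--Lefschetz trace formula together with the comparison between generic and special fibres, exactly as you do. Two of your side remarks deserve emphasis rather than parentheses. First, your aside that the degrees enter "with the signs $(-1)^i$" is in fact pointing at a discrepancy with the displayed formula: carrying out your own bookkeeping, the Tate contribution is $\sum_{i=n}^{2n-2}(-1)^i c_i q_v^{i+1-n}$, not $\sum_{i=n}^{2n-2} c_i q_v^{i+1-n}$, and the signed version is the correct one --- for $n=4$ it reads $6q_v-4q_v^2+q_v^3$, which is what makes the mod~$3$ reduction in Appendix~C ($\tr(\rho_{t,\ell}({\rm Frob}_v))\equiv q_v+1-\sharp Z_t(k_v)$) come out right, whereas the unsigned sum does not. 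So you should state the conclusion with the signs and note that the displayed equation omits them. Second, your observation that $v\nmid(n+1)$ is genuinely needed (both for smoothness of $Z_t\otimes k_v$, via your gradient computation, and for good reduction of $W_t$) is correct and should be kept visible, since the theorem as stated lists only $v\nmid\ell$ and $\ord_v(t^{n+1}-1)=0$. The only minor gloss is the claim that the $R^i\pi_!\Q_\ell$ are automatically lisse for a smooth affine family; it is cleaner to run the comparison through the proper smooth family $W_t$ (which has good reduction at such $v$) and the toric boundary strata, as the stratification argument in Lemma \ref{YW} already does.
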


\section{Appendix C}\label{AppC}
Keep the notation in the previous section. 
Now, we work on $n=4$ and $\ell=3$ and the mod 3 residual representation $\br_{t,3}$ of 
$\rho_{t,3}$. 
Recall the equation of $Z_t\subset \mathbb{G}^4_m$ from (\ref{Zt}):
$$x_1+x_2+x_3+x_4+\frac{1}{x_1x_2x_3x_4}-5t=0.$$
Consider the automorphism $\sigma:(x_1,x_2,x_3,x_4)\mapsto (x_2,x_3,x_1,x_4)$ on $Z_f$ of 
order 3. 
Let $v$ be a finite place $v\nmid 15$ of $K$ satisfying $\ord_v(t^5-1)=0$. 
Then, the orbit of $\sigma$ on $Z_f(k_v)$ has the cardinality 3 unless 
$x_1=x_2=x_3$. Thus, substituting $x=x_1=x_2=x_3$ and $y=x_4$ into the above equation, we have 
$$\sharp Z_t(k_v)\equiv \sharp\Bigg\{ (x,y)\in (k^\times_v)^2\ \Bigg|\ 
3x+y+\frac{1}{x^3y}-5\bar{t}=0   \Bigg\}\ {\rm mod}\ 3$$
where $\bar{t}:=t$ mod $v$. 
Substituting $(x,y)\mapsto \Big(-\frac{1}{x},-\frac{y}{x}\Big)$ into the equation 
$3x+y+\frac{1}{x^3y}-5\bar{t}=0$, we have the equation $y^2+(3+5\bar{t} x)y=x^5$. 
Let $D_t$ be the hyperelliptic curve (globally) defined by 
$y^2+(3+5tx)y=x^5$ for $t\in K$ with $t^5-1\neq 0$. Since it has the Weierstrass point at 
the point at infinity (which is a $K$-rational point), 
$$\sharp D_t(k_v)=1+\sharp
\Bigg\{ (x,y)\in k_v^2\ \Bigg|\ y^2+(3+5\bar{t}x)y=x^5   \Bigg\}=
3+\sharp\Bigg\{ (x,y)\in (k^\times_v)^2\ \Bigg|\ y^2+(3+5\bar{t}x)y=x^5   \Bigg\}$$
$$\equiv \sharp Z_t(k_v)\ {\rm mod}\ 3.$$
Let $\br_{D_t,3}:G_K\lra \GSp_4(\F_3)$ be the mod 3 representation attached to $D_{t}$.  
Applying Theorem \ref{pointCount} for $n=4$, we have  
$${\rm tr}(\rho_{t,\ell}({\rm Frob}_v)\equiv q_v+1-\sharp D_t(k_v)\ {\rm mod}\ 3=
{\rm tr}(\rho_{D_t,\ell}({\rm Frob}_v).$$
By Chebotarev density theorem, we conclude 
$\br_{t,\ell}^{{\rm ss}}\simeq \br_{D_t,\ell}^{{\rm ss}}$.


\begin{thebibliography}{99}

\bibitem{Allen}P-B.~Allen, 
Modularity of nearly ordinary $2$-adic residually dihedral Galois representations. 
Compos. Math. 150, No. 8, 1235-1346 (2014).

\bibitem{Allen10}
P-B.~Allen, F.~Calegari, A.~Caraiani, T.~Gee, D.~Helm, 
B.~Le Hung, J.~Newton, P.~Scholze, R.~Taylor, and J-A.~Thorne, 
Potential automorphy over CM fields. 
Ann. Math. (2) 197, No. 3, 897-1113 (2023).

\bibitem{AKT}P-B.~Allen, C-B.~Khare, and J-A.~Thorne, 
Modularity of ${\rm PGL_2}(\F_p)$-representations over totally real fields. 
Proc. Natl. Acad. Sci. USA 118, No. 33, Article ID e2108064118, 7 p. (2021).

\bibitem{AC}J.~Arthur and L.~Clozel, Simple algebras, base change, and the advanced theory of the trace formula. Annals of Mathematics Studies, 120. Princeton University Press, Princeton, NJ, 1989. xiv+230 pp. 

\bibitem{BL}T.~Barnet-Lamb, Meromorphic continuation for the zeta function of a Dwork hypersurface. Algebra Number Theory 4 (2010), no. 7, 839–854.

\bibitem{BGHT} T.~Barnet-Lamb, D.~Geraghty, M.~Harris, and R.~Taylor, A family of Calabi-Yau varieties and potential automorphy II. Publ. Res. Inst. Math. Sci. 47 (2011), no. 1, 29-98.

\bibitem{BGG-Sato}T.~Barnet-Lamb, T.~Gee, and D.~Geraghty, The Sato-Tate conjecture for Hilbert modular forms. J. Amer. Math. Soc. 24 (2011), no. 2, 411-469.


\bibitem{BGG}T.~Barnet-Lamb, T.~Gee, and T.~Geraghty,  Congruences between Hilbert modular forms: constructing ordinary lifts. Duke Math. J. 161 (2012), no. 8, 1521–1580. 

\bibitem{BGGT}T.~Barnet-Lamb, T.~Gee, D.~Geraghty, and R.~Taylor, Potential automorphy and change of weight. Ann. of Math. (2) 179 (2014), no. 2, 501-609.

\bibitem{Batyrev0}V.-V.~Batyrev, Variations of the mixed Hodge structure of affine hypersurfaces in algebraic tori.
Duke Math. J. 69 (1993), no. 2, 349–409.

\bibitem{Batyrev}\bysame, Dual polyhedral and mirror symmetry for Calabi-Yau hypersurfaces in toric varieties. 
J. Algebraic Geom. 3 (1994), no. 3, 493-535.

\bibitem{BL}J.~Bergdall and B.~Levin, Reductions of Some Two-Dimensional Crystalline
Representations via Kisin Modules, International Mathematics Research Notices, Vol. 2022, No. 4, pp. 3170–3197.

\bibitem{BLZ}L.~Berger,  H.~Li, and H-J~Zhu, Construction of some families of 2-dimensional
crystalline representations, Math. Ann. 329, 365–377 (2004). 



\bibitem{BO}P.~Berthelot and A.~Ogus, $F$-isocrystals and de Rham cohomology. I. 
Invent. Math. 72 (1983), no. 2, 159–199.


\bibitem{BH}F.~Beukers and G.~Heckman, Monodromy for the hypergeometric function 
${}_n F_{n-1}$. Invent. Math. 95 (1989), no. 2, 325-354. 



\bibitem{BCP} W.~Bosma, J.~Cannon, and C.~Playoust, The Magma algebra system. I. The user
language. J. Symbolic Comput., 24(3-4):235-265, 1997. Computational algebra and number
theory (London, 1993).

\bibitem{BCGP1}G.~Boxer, F.~Calegari, T.~Gee, and V.~Pilloni, 
Abelian surfaces over totally real fields are potentially modular. 
Publ. Math. Inst. Hautes Études Sci. 134 (2021), 153–501.

\bibitem{BCGP2}\bysame, Modularity theorems for abelian surfaces, arXiv:2502.20645.  

\bibitem{BHRD}J-N.~Bray, D-F.~Holt, C-M.~Roney-Dougal, The maximal subgroups of the low-dimensional finite classical groups. With a foreword by Martin Liebeck. London Mathematical Society Lecture Note Series, 407. Cambridge University Press, Cambridge, 2013. 

\bibitem{Breuil}C.~Breuil, Sur quelques repr\'esentations modulaires et $p$-adiques de 
$GL_2(\Q_p)$. II. J. Inst. Math. Jussieu 2 (2003), no. 1, 23–58.


\bibitem{CORV}P.~Candelas, X.~de la Ossa, and F.~Rodriguez-Villegas, Calabi-Yau manifolds over finite fields. II. Calabi-Yau varieties and mirror symmetry (Toronto, ON, 2001), 121–157, Fields Inst. Commun., 38, Amer. Math. Soc., Providence, RI, 2003.



\bibitem{CG}F-G.~Calegari and T.~Gee, Irreducibility of automorphic Galois representations of 
$GL(n)$, $n$ at most 5. Ann. Inst. Fourier (Grenoble) 63 (2013), no. 5, 1881–1912.


\bibitem{Ca}A.~Caraiani, Monodromy and local-global compatibility for $l=p$. Algebra Number Theory 8 (2014), no. 7, 1597–1646.


\bibitem{CMSP}J.~Carlson, S.~M\"uller-Stach, and C.~Peters, Period mappings and period domains. Second edition of [MR2012297]. Cambridge Studies in Advanced Mathematics, 168. Cambridge University Press, Cambridge, 2017. xiv+562 pp.

\bibitem{CHT}L. Clozel, M. Harris, and R. Taylor, Automorphy for some $l$-adic lifts of automorphic mod $l$ Galois representations. With Appendix A, summarizing unpublished work of Russ Mann, and Appendix B by Marie-France Vigneras. Publ. Math. Inst. Hautes Etudes Sci. No. 108 (2008), 1-181.  



\bibitem{Du}D-S.~Dummit, Solving solvable quintics.
Math. Comp. 57 (1991), no. 195, 387-401.


\bibitem{E}S.~Eberhard, 
Diameter of classical groups generated by transvections. 
J. Algebra 653 (2024), 220–256.


\bibitem{EM}J.~Ewing and S.~Moolgavkar, On the signature of Fermat surfaces. Michigan Math. J. 22 (1975), no. 3, 257–268 (1976).

\bibitem{Faber}X.~Faber, Finite $p$-irregular subgroups of $PGL2(k)$. Matematica 2 (2023), no. 2, 479–522. 

\bibitem{Fal}G.~Faltings,  Endlichkeitssätze f\"ur abelsche Variet\"aten über Zahlk\"orpern. 
Invent. Math. 73 (1983), no. 3, 349–366.

\bibitem{FJ}M-D.~Fried and M.~Jarden, Field arithmetic. Third edition. Springer-Verlag, Berlin, 2008. xxiv+792 pp.

\bibitem{GL}H.~Gao and T.~Liu, A note on potential diagonalizability of crystalline representations. Math. Ann. 360 (2014), no. 1-2, 481-487.


\bibitem{GHLS} T.~Gee, F.~Herzig, T.~Liu, and D.~Savitt, Potentially crystalline lifts of certain prescribed types. 
Doc. Math. 22 (2017), 397-422. 



\bibitem{Ge1}D.~Geraghty, Modular lifting theorems for ordinary Galois representations, thesis 2010. 

\bibitem{Ge2}\bysame, Modularity lifting theorems for ordinary Galois representations. Math. Ann. 373 (2019), no. 3-4, 1341-1427.

\bibitem{Gou}P.~Goutet, On the zeta function of a family of quintics. J. Number Theory 130 (2010), no. 3, 478–492. 


\bibitem{Gu}L.~Guerberoff, Modularity lifting theorems for Galois representations of unitary type. Compos. Math. 147 (2011), no. 4, 1022–1058.
 



\bibitem{HSBT}M.~Harris, N.~Shepherd-Barron, and R.~Taylor, A family of Calabi-Yau varieties and potential automorphy. Ann. of Math. (2) 171 (2010), no. 2, 779–813. 


\bibitem{Ho}B.~ Howard,  Iwasawa theory of Heegner points on abelian varieties of $GL_2$ type. Duke Math. J. 124 (2004), no. 1, 1–45.



\bibitem{Hui}C-Y.~Hui, Monodromy of subrepresentations and irreducibility of low degree automorphic Galois representations. J. Lond. Math. Soc. (2) 108 (2023), no. 6, 2436–2490.   

\bibitem{Illusie}L.~ Illusie, Grothendieck and vanishing cycles. Ann. Fac. Sci. Toulouse Math. (6) 30 (2021), no. 1, 83–115.


\bibitem{JLY}C-U.~Jensen and A.~Ledet, and N.~Yui, 
Generic polynomials.
Constructive aspects of the inverse Galois problem. Mathematical Sciences Research Institute Publications, 45. Cambridge University Press, Cambridge, 2002. x+258 pp. 


\bibitem{Kantor}W-M.~Kantor, Subgroups of classical groups generated by long root elements. Trans. Amer. Math. Soc. 248 (1979), no. 2, 347–379. 

\bibitem{KS}T.~Katsura and T.~Shioda, On Fermat varieties. Tohoku Math. J. (2) 31 (1979), no. 1, 97–115. 


\bibitem{KT}C. Khare and J-A. Thorne, 
Automorphy of some residually $S_5$ Galois representations. Math. Z. 286 (2017), no. 1-2, 399-429.


\bibitem{KW1}\bysame, Serre's modularity conjecture. I. Invent. Math. 178 (2009), no. 3, 485-504.

\bibitem{KW2} \bysame, Serre's modularity conjecture. II. Invent. Math. 178 (2009), no. 3, 505-586. 

\bibitem{KWY}H-H.~Kim, S.~Wakatsuki, and T.~Yamauchi, Equidistribution theorems for holomorphic Siegel modular forms for GSp4; Hecke fields and $n$-level density. Math. Z. 295 (2020), no. 3-4, 917–943.






\bibitem{LZ}B-H.~Lian and M.~Zhu, On the hyperplane conjecture for periods of Calabi-Yau hypersurfaces in Pn. J. Differential Geom. 118 (2021), no. 1, 101–146.

\bibitem{Liu}Q.~Liu, Courbes stables de genre 2 et leur sch\'ema de modules. Math. Ann. 295 (1993), no. 2, 201–222.

\bibitem{LH}P.~Lockhart, On the discriminant of a hyperelliptic curve. Trans. Amer. Math. Soc. 342 (1994), no. 2, 729–752.

\bibitem{Meyer}C.~ Meyer, Modular Calabi-Yau threefolds. Fields Institute Monographs, 22. American Mathematical Society, Providence, RI, 2005. x+195 pp

\bibitem{MT}K.~Miagkov and J.~Thorne,  
Automorphy lifting with adequate image. Forum Math. Sigma 11 (2023), Paper No. e8, 31 pp.

\bibitem{Mok}C-P.~Mok, Galois representations attached to automorphic forms on 
$GL_2$ over CM fields. Compos. Math. 150 (2014), no. 4, 523–567.



\bibitem{P}S.~Patrikis, Variations on a theorem of Tate. Mem. Amer. Math. Soc. 258 (2019), no. 1238, viii+156 pp.

\bibitem{Pilloni}V.~Pilloni, Higher coherent cohomology and p-adic modular forms of singular weights. Duke Math. J. 169 (2020), no. 9, 1647–1807. 

\bibitem{Ribet}K.~Ribet, Galois action on division points of Abelian varieties with real multiplications. Amer. J. Math. 98 (1976), no. 3, 751–804.

\bibitem{Ribet1}\bysame,  Abelian varieties over $\Q$ and modular forms. Modular curves and abelian varieties, 241–261, Progr. Math., 224, Birkh\"auser, Basel, 2004.


\bibitem{Schoen}C.~Schoen, 
On the geometry of a special determinantal hypersurface associated to the Mumford-Horrocks vector bundle. J. Reine Angew. Math. 364 (1986), 85–111.

\bibitem{Serre}J-P.~Serre, Lectures on $N_X(p)$. Chapman \& Hall/CRC Research Notes in Mathematics, 11. CRC Press, Boca Raton, FL, 2012. x+163 pp. 

\bibitem{SGA7II}SGA 7 II, Groupes de monodromie en géométrie alg\'ebrique. II. S\'eminaire de G\'eom\'etrie Alg\'ebrique du Bois-Marie 1967–1969 (SGA 7 II). Dirig\'e par P. Deligne et N. Katz. Lecture Notes in Mathematics, Vol. 340. Springer-Verlag, Berlin-New York, 1973. x+438 pp. 

\bibitem{ST}N-I.~Shepherd-Barron and R.~Taylor, mod $2$ and mod $5$ icosahedral representations. J. Amer. Math. Soc. 10 (1997), no. 2, 283–298.

\bibitem{Shioda}T.~Shioda, Plane quartics and Mordell-Weil lattices of type E7. Comment. Math. Univ. St. Paul. 42 (1993), no. 1, 61–79.

\bibitem{Skinner}C.~Skinner, 
A note on the p-adic Galois representations attached to Hilbert modular forms.
Doc. Math. 14 (2009), 241–258.

\bibitem{Suprunenko}D-A.~Suprunenko, Matrix groups. Translated from the Russian. Translation edited by K. A. Hirsch. Translations of Mathematical Monographs, Vol. 45. American Mathematical Society, Providence, RI, 1976. viii+252 pp. 

\bibitem{Swan}R-G.~, Swan, 
Factorization of polynomials over finite fields.
Pacific J. Math. 12 (1962), 1099–1106.

\bibitem{Tate}J.~Tate, 
Number theoretic background. Automorphic forms, representations and L-functions (Proc. Sympos. Pure Math., Oregon State Univ., Corvallis, Ore., 1977), Part 2, pp. 3–26,
Proc. Sympos. Pure Math., XXXIII, Amer. Math. Soc., Providence, RI, 1979.

\bibitem{Ta1}R.~Taylor, Remarks on a conjecture of Fontaine and Mazur. 
J. Inst. Math. Jussieu 1 (2002), no. 1, 125-143.


\bibitem{Ta3}\bysame, Automorphy for some $l$-adic lifts of automorphic mod l Galois representations. II. Publ. Math. Inst. Hautes Études Sci. No. 108 (2008), 183–239.

\bibitem{TaYo}R.~Taylor and T.~Yoshida, Compatibility of local and global Langlands correspondences. J. Amer. Math. Soc. 20 (2007), no. 2, 467–493. 


\bibitem{Th1}J-A.~Thorne, On the automorphy of $l$-adic Galois representations with small residual image. With an appendix by Robert Guralnick, Florian Herzig, Richard Taylor and Thorne. J. Inst. Math. Jussieu 11 (2012), no. 4, 855-920.

\bibitem{Th16}\bysame, 
Automorphy of some residually dihedral Galois representations. 
Math. Ann. 364, No. 1-2, 589-648 (2016).

\bibitem{Th2}\bysame, A $2$-adic automorphy lifting theorem for unitary groups over CM fields. 
Math. Z. 285 (2017), no. 1-2, 1–38.



 \bibitem{TY}N.~Tsuzuki and T.~Yamauchi, Automorphy of mod 2 Galois representations associated to the quintic Dwork family and reciprocity of some quintic trinomials,
  arXiv:2008.09852.
  
\bibitem{Uchida}K.~ Uchida, Galois group of an equation $X^n-aX+b=0$. Tohoku Math. J. (2) 22 (1970), 670–678.

\bibitem{Wagner}A.~Wagner, The faithful linear representation of least degree of $S_n$ and 
$A_n$ over a field of characteristic 2.. Math. Z. 151 (1976), no. 2, 127–137. 

\bibitem{Wagner-odd}\bysame, The faithful linear representations of least degree of 
$S_n$ and $A_n$ over a field of odd characteristic. Math. Z. 154 (1977), no. 2, 103–114.

\bibitem{Wan0}D-Q.~Wan, Newton polygons of zeta functions and L functions. Ann. of Math. (2) 137 (1993), no. 2, 249–293. 

\bibitem{Wan}\bysame, Mirror symmetry for zeta functions. With an appendix by C. Douglas Haessig. AMS/IP Stud. Adv. Math., 38, Mirror symmetry. V, 159–184, Amer. Math. Soc., Providence, RI, 2006.

\bibitem{Yam}T.~Yamauchi, Serre weights for $GSp_4$ over totally real fields, a preprint, arXiv:2006.07824, submitted. 


\end{thebibliography}
\end{document}